\numberwithin{equation}{section}
\begin{document}
\title{Generalised Rank-Constrained Approximations of Hilbert--Schmidt Operators on Separable Hilbert Spaces and Applications}

\author[]{Giuseppe Carere\thanks{~giuseppe.carere@uni-potsdam.de, ORCID ID: 0000-0001-9955-4115} }
\author[]{Han Cheng Lie\thanks{~han.lie@uni-potsdam.de, ORCID ID: 0000-0002-6905-9903}}
\affil[]{~Institut f\"ur Mathematik, Universit\"at Potsdam, Potsdam OT Golm 14476, Germany}
\renewcommand\Affilfont{\small}

\date{}
\maketitle

\begin{abstract}
	In this work we solve, for given bounded operators $B,C$ and Hilbert--Schmidt operator $M$ acting on potentially infinite-dimensional separable Hilbert spaces, the reduced rank approximation problem,
\begin{align*}
	\min\{\norm{M-BXC}_{L_2}:\ \dim\ran{X}\leq r\}.
\end{align*}
This extends the result of Sondermann (\textit{Statistische Hefte}, 1986) and Friedland and Torokhti (\textit{SIAM J. Matrix Analysis and Applications}, 2007), which studies this problem in the case of matrices $M$, $B$, $C$, $X$, and the analysis involves the Moore--Penrose inverse. In classical approximation problems that can be solved by the singular value decomposition or Moore--Penrose inverse, the solution satisfies a minimal norm property. Friedland and Torokhti state such a minimal norm property of the solution. We show that this minimal norm property does not hold in general and give a modified minimality property that does hold. We show that the solution may be discontinuous in infinite-dimensional settings. We give conditions for continuity of the solutions and construct continuous approximations when such conditions are not met. Finally, we study problems from signal processing, reduced rank regression and linear operator learning under a rank constraint. Our theoretical results enable us to explicitly find solutions to these problems and to characterise their existence, uniqueness and minimality property.
\end{abstract}
\vskip2ex
\noindent
\textbf{Keywords}: rank-constrained approximation, Hilbert-Schmidt operators, infinite-dimensional analysis, unbounded operators, signal processing, reduced rank regression, linear operator learning
\newline
\newline
\textbf{MSC codes}: 47A58, 62J99, 47A62 


\section{Introduction}
\label{sec:intro}
We study the following reduced rank approximation in Hilbert--Schmidt norm $\norm{\cdot}_{L_2}$,
\begin{align}
	\min\{\norm{M-BXC}_{L_2}:\ \dim\ran{X}\leq r\},
	\label{eqn:informal_problem}
\end{align}
where $r\in\N$ and $M$, $B$, $X$, $C$ are suitable linear operators. That is, we want to find the best approximation to $M$ of the form $BXC$ for given $B$ and $C$ in Hilbert--Schmidt norm. Such approximations have also been called `generalised approximations', see e.g. \cite{friedland_generalized_2007,li2023generalized}.  In the finite-dimensional case, the operators can be represented as matrices by expressing them in orthonormal bases. The Hilbert--Schmidt norm then becomes the Frobenius norm in the matrix formulation of the problem. 

In the finite-dimensional matrix case, the generalised approximation problem is well studied, cf. \cite{sondermann1986best,friedland_generalized_2007,soto2021error}. In \cite[Theorem 2.1]{friedland_generalized_2007} and \cite{sondermann1986best} a solution of the finite-dimensional matrix problem is formulated. Furthermore, conditions for uniqueness of the solution are given. A minimality property of the solution is also stated in \cite[Theorem 2.1]{friedland_generalized_2007}, namely that it should have minimal norm among all solutions of the approximation problem. In \cite{soto2021error} an error analysis of the solution proposed by \cite[Theorem 2.1]{friedland_generalized_2007} is added.

Next, we state the main contributions of this work, review some of the literature on applications of generalised rank-constrained approximations---with or without the minimal norm property---introduce notation and outline the rest of the paper.

\subsection{Contributions of this work}
\label{subsec:contributions}
Firstly, the theoretical contributions of this work are as follows:
\begin{itemize}
	\item 
		We show in \Cref{sec:finite_dim_case} that a conclusion in \cite[Theorem 2.1]{friedland_generalized_2007}---namely, that the solution of \eqref{eqn:informal_problem} in a finite-dimensional matrix formulation satisfies a minimal norm property---is not in general correct. In many applications, the minimal norm property is often seen as a desirable property of the solution.
	\item
		We propose a refinement of \cite[Theorem 2.1]{friedland_generalized_2007} by modifying the minimal norm property in \Cref{sec:finite_dim_case}. This modified property is always satisfied by the solution given in \cite[Theorem 2.1]{friedland_generalized_2007} and can still be interpreted as a minimality property. 
	\item
		We generalise the refined statement to an infinite-dimensional setting \Cref{thm:main}, and note that the infinite-dimensional approximation problem does not always admit an optimal solution by giving a necessary and sufficient existence condition.
	\item
		In the case where the generalised approximation problem \eqref{eqn:informal_problem} admits one or more optimal solutions, we characterise the solutions that satisfy the modified minimality property in \Cref{thm:main}, as well as the solutions that do not satisfy the modified minimality property in \Cref{cor:all_solutions}. In the case where \eqref{eqn:informal_problem} does not admit optimal solutions, we construct a sequence of approximate minimisers of the objective $\norm{M-BXC}_{L_2}$ in \Cref{subsec:approximation}.
	\item
		We calculate the minimal approximation error that can be attained for the problem \eqref{eqn:informal_problem} in \Cref{subsec:optimal_error}.
	\item
		We show that solutions of \eqref{eqn:informal_problem} are not bounded in general in \Cref{sec:unbounded_solution}, due to the Moore--Penrose inverse of $C$ that appears in the solution expression. It is even possible that some solutions are continuous, i.e. bounded,  and have finite Hilbert--Schmidt norm, while others are discontinuous, i.e. unbounded, and as such do not even have finite operator norm.
	\item
		We give several necessary or sufficient conditions for all solutions of \eqref{eqn:informal_problem} to be bounded in \Cref{subsec:boundedness_conditions}. 
	\item
		In the case where the problem \eqref{eqn:informal_problem} does not admit any bounded solutions, we propose a procedure to construct bounded approximations in \Cref{subsec:bounded:approximations}. This approximation procedure also allows us to formulate an adjoint problem to \eqref{eqn:informal_problem}, which in certain cases can be useful to recover bounded approximations, see \Cref{rmk:adjoint_formulation}.
\end{itemize}
While the Moore--Penrose inverse is defined for certain kinds of unbounded operators such as closed range operators, we restrict our analysis to the setting where $B$ and $C$ not only have closed range, but are bounded linear operators.

The second aim of this work is to demonstrate how our results can be used in relevant applications of current research interest such as operator learning \cite{kovachki_neural_2024} and in particular linear operator learning \cite{de_hoop_convergence_2023,mollenhauer_learning_2023}, reduced rank regression \cite{izenman_reduced-rank_1975,kacham2021reduced,turri_randomized_2023} and signal processing \cite{torokhti2009towards,torokhti2009filtering}. 
In such applications, there is a direct interest in an infinite-dimensional formulation. For example, as mentioned in \cite{mollenhauer_learning_2023}, the setting of infinite-dimensional linear operator learning  has direct applications to functional regression with functional response and conditional mean embeddings. In such an operator learning setting, given samples from the distribution of Hilbert space-valued random variables $x$ and $y$, the goal is to learn a linear operator $A$ that satisfies
\begin{align}
	\min\{\mathbb{E}\norm{y-Ax}^2: A\in\mathcal{A}\},
	\label{eqn:informal_application}
\end{align}
where $\mathcal{A}$ can be for example the bounded linear or Hilbert--Schmidt operators. This is also studied as a linear signal processing problem in \cite{torokhti2009towards}, where $x$ and $y$ are $\R^m$ or $\C^m$ valued. In \cite{torokhti2009towards} $x$, $y$, and $A$ are also allowed to be reweighted and the rank of $A$ can be restricted. Under such a rank restriction, the above problem can in fact also be seen as a reduced rank regression problem similar to \cite{izenman_reduced-rank_1975,turri_randomized_2023}. We show that these problems can be rewritten in formulation \eqref{eqn:informal_problem}. In \cite{kacham2021reduced}, a reduced rank regression problem of the form \eqref{eqn:informal_problem} is considered where the Hilbert--Schmidt norm is replaced by the operator norm. Applications from the perspective of linear operator learning, signal processing and reduced rank regression thus all motivate the study of a problem of form \eqref{eqn:informal_application}. Using the obtained solution of problem \eqref{eqn:informal_problem} in its adjoint formulation in the infinite-dimensional setting, we therefore analyse problem \eqref{eqn:informal_application}, where $\mathcal{A}$ is the space of finite-rank operators between separable Hilbert spaces. We also allow for reweighting of $x,y$ and $A$ as in \cite{torokhti2009towards}, and give necessary and sufficient conditions for a solution to exist, constructing also an explicit solution that satisfies a minimality property. In this application, we show that our proposed minimality property of the solutions corresponds to a maximal kernel property that naturally occurs in linear operator learning, c.f.\ \cite[Remark 3.12(b)]{mollenhauer_learning_2023}.

Even in finite-dimensional settings, the study of an infinite-dimensional analogue can be beneficial. Infinite-dimensional problems are often discretised and transformed to finite-dimensional generalised rank-constrained approximation problems. This happens for example in the study of inverse problems governed by partial differential equations, e.g. \cite{chung2017optimal}. An infinite-dimensional version of the generalised approximation problem would then enable the analysis of such problems directly in their infinite-dimensional formulation, allowing us to draw conclusions that are independent of the discretisation dimension.

\subsection{Related literature}
\label{subsec:literature}
In finite-dimensional settings, the reduced rank approximation problem \eqref{eqn:informal_problem} has been applied in the fields of signal processing \cite{torokhti2009towards,torokhti2007towards,torokhti2023optimal,torokhti2019improvement,soto2019improvement,torokhti2009filtering,soto2021data,soto2016generalized,torokhti2009generic,soto2021second,soto2022regularized,torokhti2021new,soto2019improvement,soto2017optimal,grant2014efficient,torokhti2012optimal,torokhti2018best}, data transmission systems \cite{torokhti2023matrix,soto2023fast}, prescribed eigenvalue approximation \cite{li2023generalized}, regularized inverse problems \cite{chung2017optimal}, subspace estimation in computer vision \cite{larsson2015simple}, neuroimaging \cite{langs2014decoupling}, classification \cite{mineiro2015fast}, recommender systems \cite{zhao2016predictive,chen2018visual}, image processing \cite{chung2013computing} and in distributed memory computing \cite{boutsidis2016optimal}. 
It is also used for tensor approximation \cite{saibaba2016hoid,begovic2022hybrid}, low-rank posterior approximation for linear-Gaussian inverse problems \cite{spantini_optimal_2015,spantini2017goal,bousserez2018optimal}, randomised matrix approximation algorithms \cite{song2017low,bakshi2020robust}, regression \cite{xiang2012optimal}, Bayes-risk minimisation in linear inverse problems \cite{chung2014efficient}, generative modeling \cite{langs2011learning} and fast least squares problems in statistical learning \cite{yang2020towards}. The analogue of \eqref{eqn:informal_problem} without the rank constraint is also used in other numerical linear algebra tasks \cite{holodnak2015topics}. Further applications are mentioned in \cite{chavarria2022effective}. 

For treatment of a similar problem to \eqref{eqn:informal_problem} in finite dimensions, in which the operator norm is used instead of the Hilbert--Schmidt norm, see \cite{sou2012generalized,kacham2021reduced}.

In \cite[Theorem 2.1]{friedland_generalized_2007} a solution is proposed for the reduced rank approximation problem, and a minimal norm property is stated for this solution.
This minimal norm property is reiterated in other works, \cite{boutsidis2016optimal,torokhti2007computational,li2023generalized,torokhti2009filtering,soto2021data,chung2017optimal,friedland2016matrices}, and used in \cite{howlett2022multilinear,yu2012rank,torokhti2009towards,torokhti2023matrix,soto2023fast,torokhti2018best,larsson2015simple}. Note that \cite{soto2021error}, which adds an error analysis, does not state the minimal norm property. 
The minimal norm property also has direct use in applications, as for example in the context of dynamic scene reconstruction from image measurements of \cite{larsson2015simple}. There, it is mentioned that the choice of solutions has a large effect on the quality of the reconstruction, and a minimal norm solution favours points close to the camera plane.

Imposing a minimality property has the consequence of mitigating nonuniqueness of the solution. This can also be achieved by imposing a regularizing term $\alpha^2\norm{X}_{L_2}^2$. This is the approach of \cite{chung2015optimal} for a similar reduced rank approximation problem in the context of inverse problems.

\subsection{Notation}
\label{subsec:notation}

For a matrix $Y\in\C^{s\times t}$, we denote by $(Y)_r$ a rank-$r$ truncated singular value decomposition (SVD) of $Y$. Thus if $Y=U\Sigma V^*$ is an SVD of $Y$, then $(Y)_r=\sum_{i=1}^{r}\sigma_i u_i v_i^*$ where $\sigma_i=\Sigma_{ii}$ are nonincreasing and $u_i$ and $v_i$ are the $i$-th column of $U$ and $V$ respectively. The Frobenius norm of $Y$ is denoted as $\norm{Y}_F$. By $Y^\dagger\in\C^{t\times s}$ we denote the Moore--Penrose inverse, also known as the pseudo-inverse or generalised inverse. 
We define $P_{Y,L}\coloneqq \sum_{i=1}^{\rank{Y}}u_iu_i^*$ and $P_{Y,R}\coloneqq  \sum_{i=1}^{\rank{Y}}v_iv_i^*$. Thus, $P_{Y,L}$ and $P_{Y,R}$ are the matrix representations in the standard bases of the projectors onto $\ran{Y}$ and $\ker{Y}^\perp$ respectively. It holds that $P_{Y,L}=YY^\dagger$ and $P_{Y,R}=Y^\dagger Y$. We note that the equality $P_{Y,L}=YY^\dagger$ is no longer true in general in a infinite-dimensional setting.

We let $\mathcal{H}$ and $\mathcal{K}$ be possibly infinite-dimensional separable Hilbert spaces over $\C$ or $\R$. By $\B(\mathcal{H}$, $\mathcal{K})$, $\B_0(\mathcal{H}$, $\mathcal{K})$, $\B_{00}(\mathcal{H}$,  $\mathcal{K})$ and $L_2(\mathcal{H},\mathcal{K})$, we denote the linear spaces of bounded, compact, finite-rank and Hilbert--Schmidt operators between Hilbert spaces $\mathcal{H}$ and $\mathcal{K}$. The Hilbert--Schmidt norm of a linear operator between finite-dimensional spaces is equal to the Frobenius norm of any matrix representation of this operator in an orthonormal basis (ONB). By $\B_{00,r}(\mathcal{H},\mathcal{K})$ we denote the set of finite-rank operators, i.e. operators that are bounded, have finite-dimensional range, and have rank at most $r\in\N$. Further details of these classes of operators is given in \Cref{sec:elements_fa}.

\subsection{Outline}
\label{subsec:outline}
In \Cref{sec:finite_dim_case}, we recall the statement of \cite[Theorem 2.1]{friedland_generalized_2007}. We then apply it to an elementary example in order to motivate the need for an alternative minimality property. In \Cref{sec:main_result} we then formulate and prove a refinement of the statement of \cite[Theorem 2.1]{friedland_generalized_2007} in the infinite-dimensional setting, with the modified minimality property. There, we also recover the modified finite-dimensional result of \cite{friedland_generalized_2007}, characterise solutions that do not satisfy the minimality property, and construct approximate minimisers. In \Cref{sec:unbounded_solution}, it is shown that solutions may be unbounded. In \Cref{sec:bounded_solutions}, we discuss the cases in which solutions are bounded, and approximate unbounded solutions by bounded ones. We apply our results in a more concrete setting in \Cref{sec:signal_processing_reduced_rank_regression}, which includes examples from signal processing, linear operator learning and reduced rank regression. In \Cref{sec:elements_fa} we recall certain concepts from functional analysis that we need in order to study the generalised rank-constrained approximation problem \eqref{eqn:informal_problem} in infinite dimensions. Finally, \Cref{sec:proofs} contains some proofs of minor results used in the text.

\section{Optimal approximation for matrices}
\label{sec:finite_dim_case}
In this section we recall the statement of \cite[Theorem 2.1]{friedland_generalized_2007} and discuss the minimal norm property stated therein. 

The statement of \cite[Theorem 2.1]{friedland_generalized_2007}, which provides an expression \eqref{eqn:optimal_matrix} for a solution of a rank-constrained approximation problem \eqref{eqn:matrix_problem} and asserts that any such solution has a certain minimal norm property \eqref{eqn:matrix_constraint}, is:
\newline
\newline
\textbf{Claim} of \cite[Theorem 2.1]{friedland_generalized_2007}: Let $M\in\C^{m\times n}$, $B\in\C^{m\times p}$ and $C\in\C^{q\times n}$ be given.
Then,
\begin{align}
	\hat{X}=B^\dagger(P_{B,L}MP_{C,R})_r C^\dagger
	\label{eqn:optimal_matrix}
\end{align}
solves the low-rank approximation problem
\begin{align}
	\min{\left\{\norm{M-BXC}_{F}:\ X\in\C^{p\times q},\ \rank{X}\leq r\right\} },
	\label{eqn:matrix_problem}
\end{align}
and satisfies the minimal norm property
\begin{align}
	\norm{X}_{F}=\min\left\{ \norm{\tilde{X}}_{F}:\ \tilde{X}\text{ solves }\eqref{eqn:matrix_problem} \right\}.
	\label{eqn:matrix_constraint}
\end{align}

A few remarks are in order. Firstly, in general not all solutions of \eqref{eqn:matrix_problem} are of the form \eqref{eqn:optimal_matrix}. Secondly, as $P_{C,R}=C^\dagger C$ and as in finite dimensions it holds that $P_{B,L}=BB^\dagger$, one may write \eqref{eqn:optimal_matrix} as $B^\dagger (BB^\dagger XC^\dagger C)_rC^\dagger$.
Thirdly, also a necessary and sufficient condition for uniqueness of $\hat{X}$ is given in \cite[Theorem 2.1]{friedland_generalized_2007}.
Finally, if $(P_{B,L}MP_{C,R})_r$ is not uniquely specified, then the matrix $\hat{X}$ in \eqref{eqn:optimal_matrix} is not uniquely determined. Hence \cref{eqn:optimal_matrix} should be interpreted as a set of solutions. Furthermore, this lack of uniqueness extends also to $\norm{\hat{X}}_{F}$: if there exists more than one solution of the form \eqref{eqn:optimal_matrix}, then these solutions might not all have the same norm. In other words, whether $\hat{X}$ in \eqref{eqn:optimal_matrix} is of minimal norm or not in general depends on the choice of truncated SVD $(P_{B,L}MP_{C,R})_r$. This fact indicates that the claim above is not sufficiently precise.

The following example demonstrates that if there is more than one solution to \eqref{eqn:matrix_problem} of the form \eqref{eqn:optimal_matrix}, then these solutions may have different Frobenius norm.

\begin{example}
	\label{ex:instructive_example}
Let us take $n=m=2,\ p=q=3,\ r=1$, and
\begin{align}
	M= 
	\begin{pmatrix}
		1 & 0 \\
		0 & 1
	\end{pmatrix},\quad
	B=C^\top=
	\begin{pmatrix}
		1 & 0 & 0\\
		0 & \frac{1}{2} & 0 \\
	\end{pmatrix}. 
	\label{eqn:example_definitions}
\end{align}
Then,
\begin{align*}
	C^\dagger = (B^\dagger)^\top=
	\begin{pmatrix}
		1 & 0 & 0\\
		0 & 2 & 0\\
	\end{pmatrix},\quad
	P_{B,L}=P_{C,R}=
	\begin{pmatrix}
		1 & 0\\
		0 & 1
	\end{pmatrix}.
\end{align*}
Hence $M=P_{B,L}MP_{C,R}$ and its rank-1 truncated SVDs are
\begin{align*}
	M_{1,a}\coloneqq
	\begin{pmatrix}
		1 & 0 \\
		0 & 0
	\end{pmatrix},\quad
	M_{1,b}\coloneqq
	\begin{pmatrix}
		0 & 0 \\
		0 & 1 \\
	\end{pmatrix}.
\end{align*}
The two minimizing matrices of the form \eqref{eqn:optimal_matrix} then are
\begin{align*}
	\hat{X}_{a}\coloneqq
	\begin{pmatrix}
		1 & 0 & 0\\
		0 & 0 & 0\\
		0 & 0 & 0\\
	\end{pmatrix},\quad
	\hat{X}_b\coloneqq
	\begin{pmatrix}
		0 & 0 & 0\\
		0 & 4 & 0\\
		0 & 0 & 0\\
	\end{pmatrix}.
\end{align*}
However, the Frobenius norms of $\hat{X}_a,\hat{X}_b$ differ, and thus cannot both be minimal. In particular, \eqref{eqn:matrix_constraint} does not hold.
\end{example}
The preceding example suggests that \cite[Theorem 2.1]{friedland_generalized_2007} can be made more precise, either by removing the minimum norm condition \eqref{eqn:matrix_constraint}, or by replacing this condition by a weaker condition which all solutions $\hat{X}$ given by \eqref{eqn:optimal_matrix} satisfy. Such a weaker condition must thus be independent of the choice of truncated SVD $(P_{B,L}MP_{C,R})_r$. Furthermore, if $C$ is a linear operator mapping between infinite-dimensional spaces, then ${C}^\dagger$ will in general not be bounded (cf. \Cref{prop:moore_penrose}), and thus cannot be Hilbert--Schmidt.  It can then happen that also solutions $\hat{X}$ of the form \eqref{eqn:optimal_matrix} are not Hilbert--Schmidt, in which case $\norm{\hat{X}}_{L_2}$ may be ill-defined, and we need a minimality condition that does not involve the Frobenius norm in order to generalise this condition to the infinite-dimensional setting.  For these two reasons we motivate an alternative condition, by continuing our Example \ref{ex:instructive_example}.

\addtocounter{theorem}{-1}
\begin{example}	[Continued]
	Let us put 
	\begin{align*}
		\hat{X}_{a}(\alpha) \coloneqq
		\begin{pmatrix}
			\multicolumn{2}{c}{\multirow{2}{*}{$X_{a,11}$}} & \alpha_1 \\
			 & & \alpha_2  \\ 
			\alpha_3 & \alpha_4 & \alpha_5
		\end{pmatrix},\quad
		\hat{X}_{b}(\alpha) \coloneqq
		\begin{pmatrix}
			\multicolumn{2}{c}{\multirow{2}{*}{$X_{b,11}$}} & \alpha_1 \\
			 & & \alpha_2  \\ 
			\alpha_3 & \alpha_4 & \alpha_5
		\end{pmatrix},
	\end{align*}
	with $X_{a,11}=\big(\begin{smallmatrix}
	  1 & 0\\
	  0 & 0
	\end{smallmatrix}\big)$ 
	and $X_{b,11}=\big(\begin{smallmatrix}
	  0 & 0\\
	  0 & 4
	\end{smallmatrix}\big)$,
	so $\hat{X}_a(0)=\hat{X}_a$ and $\hat{X}_b(0)=\hat{X}_b$. For $c\in\{a,b\}$, we notice that $\hat{X}_c(\alpha)$ also solves the minimisation problem \eqref{eqn:matrix_problem}, for any $\alpha=(\alpha_i)_{i=1}^5\in\C^5$, as $B\hat{X}_{a}(\alpha)C=\left( \begin{smallmatrix}1 & 0 \\ 0 & 0\end{smallmatrix} \right)$ and $B\hat{X}_{b}(\alpha)C=\left( \begin{smallmatrix}0 & 0 \\ 0 & 1\end{smallmatrix} \right)$.
	Hence, $B\hat{X}_{c}(\alpha)C=B\hat{X}_c(0)C$ for all $\alpha$. Furthermore, any solution to \eqref{eqn:matrix_problem} can be written as $\hat{X}_c(\alpha)$ for some $\alpha\in\mathbb{C}^5$ and $c\in\{a,b\}$. To see this, let us write an element of $\C^{3\times 3}$ as, for $x_{11},x_{12},x_{21},x_{22}\in\C$,
	\begin{align*}
	X(\alpha)\coloneqq 
	\begin{pmatrix}
		x_{11} & x_{12} & \alpha_1 \\
		x_{21} & x_{22} & \alpha_2 \\
		\alpha_3 & \alpha_4 & \alpha_5
	\end{pmatrix}.
	\end{align*}
	Like $B\hat{X}_c(\alpha)C$, also $BX(\alpha)C$ does not depend on $\alpha$. By the Eckart--Young Theorem (c.f.\ \Cref{prop:eckart_young}), for $X(\alpha)$ to be a solution of \eqref{eqn:matrix_problem}, we need $BX(\alpha)C$ to be a rank-1 SVD of $M$. That is, we need either $x_{11}=1$ or $x_{22}=4$, and all other entries to be equal to zero. 

	Thus, among all the solutions of \eqref{eqn:matrix_problem}, i.e. among all $\hat{X}_c(\alpha)$, only $\hat{X}_a(0)$ and $\hat{X}_b(0)$ are of the form \eqref{eqn:optimal_matrix}, and only $\hat{X}_a(0)$ satisfies \eqref{eqn:matrix_constraint}. 
	For this example, we now construct an alternative to \eqref{eqn:matrix_constraint}, which both $\hat{X}_a(0)$ and $\hat{X}_b(0)$ satisfy and which can still be interpreted as a minimality condition.

	Let $e_i$ denote the $i$-th canonical basis vector of $\C^{3}$ and $V\coloneqq\Span{e_1,e_2}\subset\C^{3}$. Let $P_V$ the matrix representation of the projector onto $V$ in the canonical basis of $\mathbb{C}^3$. Then for $B,C$ defined in \eqref{eqn:example_definitions}, $P_{B,R}=P_V=P_{C,L}$. We observe $P_{B,L}\hat{X}_c(\alpha)P_{C,R} = P_V\hat{X}_{c}(\alpha)P_V=\hat{X}_{c}(0)$ for any $\alpha$, and compute,
	\begin{align}
		\norm{\hat{X}_{c}(\alpha)}_{F}^2 = \norm{X_{c,11}}_{F}^2 + \sum_{i=1}^{5}\abs{\alpha_i}^2\geq \norm{X_{c,11}}_{F}^2 = \norm{\hat{X}_c(0)}_{F}^2.
		\label{eqn:minimiality_of_example}
	\end{align}
	Hence, for fixed $c\in\{a,b\}$, among the solutions $\hat{X}_c(\alpha)$, only $\hat{X}_c(0)$ has minimal Hilbert--Schmidt norm. We recall that the $\hat{X}_c(\alpha)$ satisfy $P_V\hat{X}_c(\alpha)P_V=\hat{X}_c(\alpha)$ if and only if $\alpha=0$. That is, it is precisely the solutions $\hat{X}$ of \eqref{eqn:matrix_problem} which satisfy 
	\begin{align}
		\hat{X} = P_{B,R}\hat{X}P_{C,L},
		\label{eqn:minimality_condition}
	\end{align}
	that are the minimisers in \eqref{eqn:minimiality_of_example} for some $c\in\{a,b\}$.
	Hence \eqref{eqn:minimality_condition} provides an alternative for the minimal norm property \eqref{eqn:matrix_constraint}. Thus, while $X_a(0)$ and $X_b(0)$ might not have the same norm, $X_c(0)$ has minimal norm among all $X_c(\alpha),\alpha\in\C^5$, for fixed $c\in\{a,b\}$, and $X_a(0),X_b(0)$ are in turn those solutions of \eqref{eqn:matrix_problem} which satisfy \eqref{eqn:minimality_condition}.
\end{example}

To generalise the reasoning of Example \ref{ex:instructive_example} as to why \eqref{eqn:minimality_condition} can be interpreted as a minimality condition, notice that \eqref{eqn:minimality_condition} is equivalent to $(I-P_{B,R})\hat{X}P_{C,L}=0$ and $\hat{X}(I-P_{C,L})=0$, since any $X\in\C^{p\times q}$ can be decomposed as
\begin{align*}
	X = P_{B,R}XP_{C,L}+(I-P_{B,R})XP_{C,L}+X(I-P_{C,L}).
\end{align*}
Now, $(I-P_{B,R})\hat{X}P_{C,L} = 0$ is equivalent to the property that $\hat{X}$ maps $\ran{C}$ into $\ker{B}^\perp$ and $\hat{X}(I-P_{C,L})=0$ is equivalent to the property that $\hat{X}$ maps $\ran{C}^\perp$ to $\{0\}$. In other words, if we express $\hat{X}\in\C^{p\times q}$ relative to the decompositions $\ran{C}\oplus\ran{C}^\perp$ of its domain and $\ker{B}^\perp\oplus\ker{B}$ of its range,
\begin{align*}
	\hat{X} =
	\begin{pmatrix}
		X_{11} & X_{12} \\
		X_{21} & X_{22}
	\end{pmatrix},
\end{align*}
with  $X_{11}\in\C^{s\times t}$, $X_{12}\in\C^{s\times (q-t)}$, $X_{21}\in\C^{(p-s)\times t}$, $X_{22}\in\C^{(p-s)\times(q-t)}$, $t\coloneqq\dim\ran{C}$, $s\coloneqq\dim\ker{B}^\perp$, then $X_{21}$, $X_{12}$, $X_{22}$ are all zero. If \eqref{eqn:minimality_condition} fails to hold, then at least one of the blocks $X_{12},X_{21},X_{22}$ must be nonzero, and the Hilbert--Schmidt norm of $\hat{X}$ is larger than that of $X_{11}$. Thus, we can view \eqref{eqn:minimality_condition} as expressing a minimality property, without referring to a norm.

If $\hat{X}$ in \eqref{eqn:optimal_matrix} is unique, then $\norm{\hat{X}}_{L_2}$ is indeed minimal among all minimisers $X$ of \eqref{eqn:matrix_problem}, in which case the statement of \cite[Theorem 2.1]{friedland_generalized_2007} is still valid.
Of course, this holds only when Frobenius norms can in fact be computed, which is no longer true in general for infinite-dimensional spaces.
In the next section we introduce a statement similar to \cite[Theorem 2.1]{friedland_generalized_2007} with the modified minimality property \eqref{eqn:minimality_condition} in an infinite-dimensional setting.

\section{Optimal approximations for Hilbert--Schmidt operators on separable Hilbert spaces}
\label{sec:main_result}
We are now ready to formulate \eqref{eqn:optimal_matrix} and \eqref{eqn:matrix_problem} with the minimality property \eqref{eqn:minimality_condition} in infinite dimensions. In \Cref{subsec:inf_dim_problem}, we give necessary and sufficient conditions for a solution to exist and describe the solution. An interpretation of the minimality property is also given, as well as the finite-dimensional modification of \cite[Theorem 2.1]{friedland_generalized_2007}.  When no solution exists, i.e. when the minimum Hilbert--Schmidt approximation error is not attained, one can still explicitly construct arbitrarily good approximations, as is shown in \Cref{subsec:approximation}. When a solution does exist, the corresponding optimal Hilbert--Schmidt approximation error is computed in \Cref{subsec:optimal_error}. The theory below involves the use of unbounded linear operators, SVDs of compact operators and Moore--Penrose inverses, which are recalled in \Cref{sec:elements_fa}.

\subsection{Problem formulation and solution}
\label{subsec:inf_dim_problem}
As in the finite-dimensional setting of \Cref{sec:finite_dim_case}, we again need the orthogonal projectors $P_{\overline{\ran{A}}}$ and $P_{\ker{A}^\perp}$ onto the closure of the range and orthogonal complement of the kernel of a bounded linear operator $A$. We note by \eqref{eqn:third_moore_penrose} that $P_{\ker{A}^\perp}=A^\dagger A$ as in the finite-dimensional setting. However, unlike the finite-dimensional setting, it holds by \eqref{eqn:fourth_moore_penrose} that $P_{\overline{\ran{A}}}$ merely extends $AA^\dagger$, as $AA^\dagger$ is only defined on the dense subspace $\dom{A^\dagger}$, whereas $P_{\overline{\ran{A}}}$ is defined everywhere. \begin{setting}
	Let $\mathcal{H}_1,\mathcal{H}_2,\mathcal{H}_3,\mathcal{H}_4$ be separable Hilbert spaces, let $r\in\N$ and let $M\in L_2(\mathcal{H}_1,\mathcal{H}_4)$, $B\in\B(\mathcal{H}_3,\mathcal{H}_4)$ and $C\in\B(\mathcal{H}_1,\mathcal{H}_2)$ be given. We define
	\begin{align*}
		\mathcal{X}_r &\coloneqq \left\{X:\mathcal{H}_2\rightarrow\mathcal{H}_3:\ \dom{X}\supset\dom{C^\dagger},\dim\ran{X}\leq r, BXC\in L_2(\mathcal{H}_1,\mathcal{H}_4)\right\},\\
		\mathcal{Y}_{r}&\coloneqq\left\{ Y\in\B_{00,r}(\mathcal{H}_1,\mathcal{H}_4):\ \ran{Y}\subset\dom{B^\dagger},\right.\\
		&\left.\qquad\forall\tilde{Y}\in\B_{00,r}(\mathcal{H}_1,\mathcal{H}_4),\ \norm{P_{\overline{\ran{B}}}MP_{\ker{C}^\perp}-Y}_{L_2}\leq \norm{P_{\overline{\ran{B}}}MP_{\ker{C}^\perp}-\tilde{Y}}_{L_2} \right\}.
	\end{align*}
	\label{setting:main}
\end{setting}
Thus in \Cref{setting:main}, $\mathcal{X}_r$ contains in particular all operators of rank at most $r$, which are bounded by definition. We stress that $\mathcal{X}_r$ in general also contains unbounded operators whose range has dimension at most $r$. Furthermore, $\mathcal{Y}_{r}$ is the set of solutions of the reduced rank problem 
\begin{align}
	\min\{\norm{P_{\overline{\ran{B}}} MP_{\ker{C}^\perp}-Y}_{L_2}:\ Y\in\mathcal{B}_{00,r}(\mathcal{H}_1,\mathcal{H}_4)\},
	\label{eqn:equivalent_problem}
\end{align}
that also map into $\dom{B^\dagger}$.
Hence by \Cref{prop:eckart_young}, $\mathcal{Y}_{r}$ consists of all rank-$r$ truncated SVDs of $P_{\overline{\ran{B}}}MP_{\ker{C}^\perp}$, as defined in \eqref{eqn:definition_svd}, that map into $\dom{B^\dagger}$.

We can now state the infinite-dimensional problem and its solution.
\begin{theorem}
	\label{thm:main}
	Assume \Cref{setting:main}. There exists a solution to the problem
	\begin{align}
		\label{eqn:reduced_rank_approximation_problem}
		\min\{\norm{M-BXC}_{L_2}:\ X\in\mathcal{X}_r\}
	\end{align}
	if and only if $\mathcal{Y}_{r}\not=\emptyset$.
	In that case, for each $(P_{\overline{\ran{B}}}MP_{\ker{C}^\perp})_r\in\mathcal{Y}_{r}$ a solution is given by
	\begin{align}
		\label{eqn:hat_X}
		\hat{X}:\dom{C^\dagger}\subset\mathcal{H}_2\rightarrow \mathcal{H}_3,\quad \hat{X}= B^\dagger (P_{\overline{\ran{B}}}MP_{\ker{C}^\perp})_rC^\dagger.
	\end{align}
	Furthermore, $\hat{X}$ satisfies
	\begin{align}
		\label{eqn:minimality_constraint}
		X = P_{\ker{B}^\perp} X P_{\overline{\ran{C}}}\text{ on }\dom{X}.
	\end{align}
	This solution is unique, in the sense that $X=\tilde{X}$ on $\dom{C^\dagger}$ for solutions $X,\tilde{X}$ of \eqref{eqn:reduced_rank_approximation_problem} satisfying \eqref{eqn:minimality_constraint}, if and only if one of the following conditions holds:
	\begin{enumerate}[label=(\alph*)]
		\item
			$P_{\overline{\ran{B}}}MP_{\ker{C}^\perp}\in\B_{00,r}(\mathcal{H}_1,\mathcal{H}_4),$
			\label{eqn:uniqueness_condition_a}

		\item
			$\sigma_r({P_{\overline{\ran{B}}}MP_{\ker{C}^\perp}})>\sigma_{r+1}({P_{\overline{\ran{B}}}MP_{\ker{C}^\perp}}).$
			\label{eqn:uniqueness_condition_b}
	\end{enumerate}
\end{theorem}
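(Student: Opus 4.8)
The plan is to reduce \eqref{eqn:reduced_rank_approximation_problem} to the unconstrained problem \eqref{eqn:equivalent_problem} by an orthogonal decomposition, and then transfer the solution back through the explicit formula \eqref{eqn:hat_X}. Write $Q = P_{\overline{\ran{B}}}$ and $R = P_{\ker{C}^\perp}$. For any $X\in\mathcal{X}_r$ the operator $BXC\in L_2(\mathcal{H}_1,\mathcal{H}_4)$ satisfies $\ran{BXC}\subset\ran{B}\subset\overline{\ran{B}}$ and $\ker{BXC}\supset\ker{C}$ (the latter because $\ran{C}\subset\dom{C^\dagger}\subset\dom{X}$), so $BXC = Q(BXC)R$. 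Using $M\in L_2$ I would then write
\begin{align*}
	M - BXC = (QMR - BXC) + QM(I-R) + (I-Q)MR + (I-Q)M(I-R),
\end{align*}
and observe that the four summands lie in mutually orthogonal blocks for the Hilbert--Schmidt inner product, since their left and right projections onto the orthogonal subspaces $\overline{\ran{B}},\overline{\ran{B}}^\perp$ and $\ker{C}^\perp,\ker{C}$ are orthogonal. The Pythagorean identity gives $\norm{M-BXC}_{HS}^2 = \norm{QMR - BXC}_{HS}^2 + c$ with $c$ independent of $X$, so \eqref{eqn:reduced_rank_approximation_problem} is equivalent to minimising $\norm{QMR - BXC}_{HS}$ over $X\in\mathcal{X}_r$.

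Since $\{BXC: X\in\mathcal{X}_r\}\subset\B_{00,r}(\mathcal{H}_1,\mathcal{H}_4)$, this infimum is bounded below by the optimal value of \eqref{eqn:equivalent_problem}, attained by any rank-$r$ truncated SVD of $QMR$ via \Cref{prop:eckart_young}. The minimum in the original problem is therefore attained if and only if some minimiser $Y$ of \eqref{eqn:equivalent_problem} is of the form $BXC$. Every achievable $Y$ satisfies $\ran{Y}\subset\ran{B}$; conversely, for a truncated SVD $Y=(QMR)_r$ the formula $\hat{X}=B^\dagger Y C^\dagger$ yields $B\hat{X}C=Y$ exactly when $\ran{Y}\subset\dom{B^\dagger}$. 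Indeed, $C^\dagger C = R$ by \eqref{eqn:third_moore_penrose} and $YR=Y$ give $YC^\dagger C = Y$, while $\ran{Y}\subset\overline{\ran{B}}$ together with the identity $\dom{B^\dagger}\cap\overline{\ran{B}}=\ran{B}$ forces $\ran{Y}\subset\ran{B}$, whence $BB^\dagger Y = Y$ using \eqref{eqn:fourth_moore_penrose}. This establishes the stated equivalence with $\mathcal{Y}_r\neq\emptyset$ in both directions, and I would separately verify $\hat{X}\in\mathcal{X}_r$ by checking $\dom{\hat{X}}\supset\dom{C^\dagger}$, $\dim\ran{\hat{X}}\leq\dim\ran{Y}\leq r$, and $B\hat{X}C=Y\in L_2$.

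For the minimality property \eqref{eqn:minimality_constraint} I would use $\ran{B^\dagger}\subset\ker{B}^\perp$, so that $P_{\ker{B}^\perp}B^\dagger = B^\dagger$, and $C^\dagger P_{\overline{\ran{C}}} = C^\dagger$ on $\dom{C^\dagger}$, which holds because $C^\dagger$ annihilates $\overline{\ran{C}}^\perp$; together these give $P_{\ker{B}^\perp}\hat{X}P_{\overline{\ran{C}}}=\hat{X}$ on $\dom{C^\dagger}$. For uniqueness, note that $X\mapsto BXC$ is injective on the set of solutions of the form \eqref{eqn:hat_X} satisfying \eqref{eqn:minimality_constraint} (since $B\hat{X}C=Y$ recovers $Y$) and that its image is exactly $\mathcal{Y}_r$; hence the solution is unique if and only if $\abs{\mathcal{Y}_r}=1$. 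I would then reduce this to the standard SVD dichotomy: condition \ref{eqn:uniqueness_condition_a} makes $QMR$ itself the unique minimiser of \eqref{eqn:equivalent_problem}, and condition \ref{eqn:uniqueness_condition_b} makes the top-$r$ spectral projection, and hence $(QMR)_r$ as an operator, unique; when neither holds one constructs two distinct admissible truncations.

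The main obstacle I expect is the domain bookkeeping forced by the unbounded operators $B^\dagger$ and $C^\dagger$ (cf. \Cref{prop:moore_penrose}): one must ensure every composition is defined on $\dom{C^\dagger}$, keep track of the fact that $P_{\overline{\ran{B}}}$ only extends $BB^\dagger$, and exploit the identity $\dom{B^\dagger}\cap\overline{\ran{B}}=\ran{B}$, which is precisely what converts the abstract existence question into the concrete criterion $\mathcal{Y}_r\neq\emptyset$. The genuinely infinite-dimensional difficulty recurs in the negative direction of the uniqueness claim, where exhibiting two distinct truncated SVDs that both lie in $\dom{B^\dagger}$ requires care, since a rotation within the $\sigma_r$-eigenspace need not preserve membership in $\dom{B^\dagger}$.
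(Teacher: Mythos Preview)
Your proposal is correct and follows essentially the same route as the paper: the orthogonal block decomposition is exactly \Cref{lemma:projected_problem}, and your construction and verification of $\hat{X}=B^\dagger YC^\dagger$ together with the minimality property mirror \Cref{lemma:solution_characterisation}\ref{item:surjectivity}--\ref{item:inverse_range}. The paper packages these steps into two auxiliary lemmas and then assembles them, whereas you run the argument inline, but the mathematical content is the same.

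One point worth noting: you flag the ``only if'' direction of uniqueness as a genuine difficulty, observing that when $\sigma_r=\sigma_{r+1}$ a rotation within the degenerate singular subspace need not keep $\ran{Y}$ inside $\dom{B^\dagger}$, so that $\mathcal{Y}_r$ could in principle be a singleton even when neither \ref{eqn:uniqueness_condition_a} nor \ref{eqn:uniqueness_condition_b} holds. The paper's proof does not address this subtlety either; it simply asserts that $\mathcal{Y}_r$ has at most one element if and only if \ref{eqn:uniqueness_condition_a} or \ref{eqn:uniqueness_condition_b} holds, appealing to the SVD construction. Your caution here is well placed and, if anything, more careful than the paper on this point.
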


The conditions \ref{eqn:uniqueness_condition_a} or \ref{eqn:uniqueness_condition_b} ensure the rank-$r$ reduced SVD in \eqref{eqn:hat_X} is unique, c.f.\ \Cref{subsec:svd}.
Furthermore, if $\ran{M}\subset\dom{B}^\dagger$, then $P_{\overline{\ran{B}}}M = BB^\dagger M$ by \eqref{eqn:fourth_moore_penrose} and
\begin{align*}
	\ran{(P_{\overline{\ran{B}}}MP_{\ker{C}^\perp})_r}\subset\ran{P_{\overline{\ran{B}}}MP_{\ker{C}^\perp}}\subset\ran{B}.
\end{align*}
The condition $\ran{(P_{\overline{\ran{B}}}MP_{\ker{C}^\perp})_r}\subset\dom{B^\dagger}$ is then satisfied.
\begin{remark}[Solution domain]
	\label{rmk:solution_extension}
	In general, not all solutions of \eqref{eqn:reduced_rank_approximation_problem} that satisfy \eqref{eqn:minimality_constraint} are given by \eqref{eqn:hat_X}, as they might have a domain larger than $\dom{C^\dagger}=\dom{X^\dagger}$, but they agree with $\hat{X}$ on $\dom{C^\dagger}$, i.e. they extend $\hat{X}$.
\end{remark}
\begin{remark}[Equivalent domain condition]
	\label{rmk:equivalent_range_condition}
	Since $\ran{(P_{\overline{\ran{B}}}MP_{\ker{C}^\perp})_r}\subset\ran{P_{\overline{\ran{B}}}MP_{\ker{C}^\perp}}\subset \overline{\ran{B}}$ and $\dom{B}^\dagger=\ran{B}\oplus\ran{B}^\perp$, the requirement $\ran{(P_{\overline{\ran{B}}}MP_{\ker{C}^\perp})_r}\subset \dom{B}^\dagger$ in the definition of $\mathcal{Y}_{r}$ is equivalent to $\ran{(P_{\overline{\ran{B}}}MP_{\ker{C}^\perp})_r}\subset\ran{B}$.
\end{remark}
\begin{remark}[Case with $B$, $C$ having closed range]
	\label{rmk:closed_range_case}
	By \Cref{prop:moore_penrose} \cref{item:mp_inv_boundedness}, the following hold:
	\begin{enumerate}
		\item
			\label{item:C_closed_range}
			If $C$ has closed range, then $\mathcal{X}_r=B_{00,r}(\mathcal{H}_2,\mathcal{H}_3)$ and $P_{\overline{\ran{C}}}=CC^\dagger$. In this case, all solutions of \eqref{eqn:reduced_rank_approximation_problem} that satisfy \eqref{eqn:minimality_constraint} are given by \eqref{eqn:hat_X}.
	
		\item
			If $B$ has closed range, then $\dom{B^\dagger}=\mathcal{H}_4$, so $\ran{(P_{\overline{\ran{B}}}MP_{\ker{C}^\perp})_r}\subset \dom{B^\dagger}$ trivially, hence $\mathcal{Y}_{r}=\{(P_{\overline{\ran{B}}}MP_{\ker{C}^\perp})_r\}$ and so $\mathcal{Y}_{r}$ is nonempty. 
	\end{enumerate}
\end{remark}

In order to prove the theorem, we define the following solution spaces:
\begin{align*}
	\mathcal{S}_r&\coloneqq\left\{ X\in\mathcal{X}_{r}:\ \forall\tilde{X}\in\mathcal{X}_r,\ \norm{M-BXC}_{L_2}\leq \norm{M-B\tilde{X}C}_{L_2}\right\},\\
	\mathcal{S}_{r,*}&\coloneqq\left\{ X\in\mathcal{S}_r:\ X = P_{\ker{B}^\perp} X P_{\overline{\ran{C}}} \text{ on } \dom{X}\right\}.
\end{align*}
Thus, $\mathcal{S}_{r}$ is the set of solutions to the reduced rank problem \eqref{eqn:reduced_rank_approximation_problem}. Further, $\mathcal{S}_{r,*}$ is the set of solutions to \eqref{eqn:reduced_rank_approximation_problem} that also satisfy \eqref{eqn:minimality_constraint}. 
We must show, under the existence condition $\mathcal{Y}_{r}\not=\emptyset$, that $\hat{X}$ defined in \eqref{eqn:hat_X} lies in $\mathcal{S}_{r,*}$, and that $X=\tilde{X}$ on $\dom{C^\dagger}$ for any $X,\tilde{X}\in\mathcal{S}_{r,*}$ if and only if the uniqueness condition \ref{eqn:uniqueness_condition_a} or \ref{eqn:uniqueness_condition_b} holds.
To this end, we show that solutions to \eqref{eqn:reduced_rank_approximation_problem} and \eqref{eqn:minimality_constraint} can be constructed by a solution to \eqref{eqn:equivalent_problem} which maps into $\dom{B^\dagger}$ and vice versa, thereby drawing a connection between the sets $\mathcal{S}_{r,*}$ and $\mathcal{Y}_{r}$. The following lemma is crucial for this purpose. 

\begin{lemma}
	\label{lemma:projected_problem}
	Let $M$, $B$, $C$ and $\mathcal{X}_r$ be as in \Cref{thm:main} and $c\coloneqq \norm{M}_{L_2}^2-\norm{P_{\overline{\ran{B}}}MP_{\ker{C}^\perp}}_{L_2}^2$. For any $X\in\mathcal{X}_r$,
	\begin{align}
		\norm{M-BXC}_{L_2}^2 &= \norm{P_{\overline{\ran{B}}}MP_{\ker{C}^\perp}-BXC}_{L_2}^2 + c.
		\label{eqn:error_fried_torok_2}
	\end{align}
\end{lemma}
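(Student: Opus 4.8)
The plan is to read \eqref{eqn:error_fried_torok_2} as a Pythagorean identity in the Hilbert space $L_2(\mathcal{H}_1,\mathcal{H}_4)$ equipped with the inner product $\langle\cdot,\cdot\rangle_{HS}$ inducing $\norm{\cdot}_{HS}$. To this end I would introduce the map
\begin{align*}
\Phi:L_2(\mathcal{H}_1,\mathcal{H}_4)\rightarrow L_2(\mathcal{H}_1,\mathcal{H}_4),\qquad \Phi(A)=P_{\overline{\ran{B}}}\,A\,P_{\ker{C}^\perp},
\end{align*}
which is well defined since composing a Hilbert--Schmidt operator with the bounded projectors $P_{\overline{\ran{B}}}$ and $P_{\ker{C}^\perp}$ again yields a Hilbert--Schmidt operator. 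The whole statement then reduces to showing that $\Phi$ is an orthogonal projection on $L_2(\mathcal{H}_1,\mathcal{H}_4)$ and that $BXC$ lies in its range, i.e.\ $\Phi(BXC)=BXC$.

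The key algebraic facts are three. First, $\Phi$ is idempotent, since $P_{\overline{\ran{B}}}$ and $P_{\ker{C}^\perp}$ are, so $\Phi^2(A)=P_{\overline{\ran{B}}}^2\,A\,P_{\ker{C}^\perp}^2=\Phi(A)$. Second, $\Phi$ is self-adjoint with respect to $\langle\cdot,\cdot\rangle_{HS}$: from the cyclicity of the trace one has the general identity $\langle SAT,D\rangle_{HS}=\langle A,S^*DT^*\rangle_{HS}$ for bounded $S,T$ and Hilbert--Schmidt $A,D$, and applying it with the self-adjoint projectors $S=P_{\overline{\ran{B}}}$ and $T=P_{\ker{C}^\perp}$ gives $\langle\Phi A,D\rangle_{HS}=\langle A,\Phi D\rangle_{HS}$. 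Hence $\Phi$ is an orthogonal projection. Third, $BXC\in\ran{\Phi}$: since $\ran{B}\subseteq\overline{\ran{B}}$ we have $P_{\overline{\ran{B}}}B=B$, and since $C$ annihilates $\ker{C}$ we have $CP_{\ker{C}^\perp}=C$; reading the compositions pointwise on $\mathcal{H}_1$ (legitimate because $\ran{C}\subset\dom{C^\dagger}\subset\dom{X}$, so $XC$ is defined everywhere) yields $\Phi(BXC)=(P_{\overline{\ran{B}}}B)\,X\,(CP_{\ker{C}^\perp})=BXC$.

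With these facts I would decompose
\begin{align*}
M-BXC=(I-\Phi)M+(\Phi M-BXC),
\end{align*}
where $(I-\Phi)M\in\ker{\Phi}=\ran{\Phi}^\perp$, while $\Phi M-BXC\in\ran{\Phi}$ because both $\Phi M$ and $BXC=\Phi(BXC)$ lie in $\ran{\Phi}$. Orthogonality of the two summands and the Pythagorean theorem then give
\begin{align*}
\norm{M-BXC}_{HS}^2=\norm{(I-\Phi)M}_{HS}^2+\norm{\Phi M-BXC}_{HS}^2.
\end{align*}
Since $\Phi$ is an orthogonal projection, $\norm{(I-\Phi)M}_{HS}^2=\norm{M}_{HS}^2-\norm{\Phi M}_{HS}^2=c$, and $\Phi M=P_{\overline{\ran{B}}}MP_{\ker{C}^\perp}$, which is precisely \eqref{eqn:error_fried_torok_2}.

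The main obstacle is bookkeeping rather than depth: because $X\in\mathcal{X}_r$ may be unbounded, I must ensure every manipulation stays within the class of genuine operators. This is exactly what the definition of $\mathcal{X}_r$ guarantees, since it requires $BXC\in L_2(\mathcal{H}_1,\mathcal{H}_4)$ and $\dom{X}\supset\dom{C^\dagger}=\ran{C}\oplus\ran{C}^\perp$, so that $BXC$ is a bona fide Hilbert--Schmidt operator to which the inner-product identities apply. The remaining point needing care is the identity $\langle SAT,D\rangle_{HS}=\langle A,S^*DT^*\rangle_{HS}$, whose justification relies on products of Hilbert--Schmidt operators being trace class together with cyclicity of the trace; I would either cite this standard fact or verify it directly on an orthonormal basis of $\mathcal{H}_1$.
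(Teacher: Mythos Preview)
Your argument is correct and is essentially the paper's own proof: the paper decomposes $M-BXC$ into four mutually $HS$-orthogonal pieces via $P_{\overline{\ran{B}}}$ and $P_{\ker{C}^\perp}$ and applies Pythagoras, which is exactly your projection $\Phi$ written out componentwise. Your packaging via the single orthogonal projection $\Phi$ on $L_2(\mathcal{H}_1,\mathcal{H}_4)$ is a clean reformulation of the same idea, and your remarks on why $BXC$ is a genuine Hilbert--Schmidt operator despite $X$ possibly being unbounded match the paper's use of the definition of $\mathcal{X}_r$.
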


\begin{proof}[Proof of \Cref{lemma:projected_problem}]
	For any separable Hilbert space $\mathcal{K}$ with ONB $(e_i)_i$, we have, 
	\begin{align*}
		\langle P_{\overline{\ran{B}}}S,(I-P_{\overline{\ran{B}}})T\rangle_{L_2} = \sum_{i}^{}\langle P_{\overline{\ran{B}}}Se_i,(I-P_{\overline{\ran{B}}})Te_i\rangle = 0,\quad T,S\in  L_2(\mathcal{K},\mathcal{H}_4),\\
		\langle SP_{\ker{C}^\perp},T(I-P_{\ker{C}^\perp})\rangle_{L_2} = \langle (I-P_{\ker{C}^\perp})T^*,P_{\ker{C}^\perp}S^*\rangle_{L_2}=0,\quad T,S\in L_2(\mathcal{H}_1,\mathcal{K}).
	\end{align*}
	Furthermore, the Moore--Penrose equations \eqref{eqn:first_moore_penrose}, \eqref{eqn:third_moore_penrose} and \eqref{eqn:fourth_moore_penrose} in \Cref{prop:moore_penrose} state that $P_{\overline{\ran{B}}}B=BB^\dagger B=B$ and $CP_{\ker{C}^\perp}=CC^\dagger C = C$.
	Therefore, for any $X\in\mathcal{X}_r$, we have $P_{\overline{\ran{B}}}BXCP_{\ker{C}^\perp}=BXC\in L_2(\mathcal{H}_1,\mathcal{H}_4)$, and
	\begin{alignat*}{3}
		\norm{M-BXC}_{L_2}^2 &=&& \norm{P_{\overline{\ran{B}}}MP_{\ker{C}^\perp}-BXC}_{L_2}^2 + \norm{(I-P_{\overline{\ran{B}}})MP_{\ker{C}^\perp}}_{L_2}^2  \\
		&&&+ \norm{P_{\overline{\ran{B}}}M(I-P_{\ker{C}^\perp})}_{L_2}^2 + \norm{(I-P_{\overline{\ran{B}}})M(I-P_{\ker{C}^\perp})}_{L_2}^2\\
		&=&& \norm{P_{\overline{\ran{B}}}MP_{\ker{C}^\perp}-BXC}_{L_2}^2 + c.
	\end{alignat*}
\end{proof}

Using \Cref{lemma:projected_problem}, we can now relate the sets $\mathcal{S}_r$ and $\mathcal{Y}_{r}$.

\begin{lemma}
	\label{lemma:solution_characterisation}
	Let $M$, $B$, $C$ and $\mathcal{X}_r$ be as in \Cref{thm:main}. Then the following statements hold.
	\begin{enumerate}
		\item
			For $Y\in\mathcal{Y}_{r}$ it holds that ${B^\dagger YC^\dagger}$ is a well-defined linear operator on $\dom{C^\dagger}$, which satisfies $B^\dagger YC^\dagger\in\mathcal{S}_r$ and $BB^\dagger YC^\dagger C=Y$.
			\label{item:surjectivity}
		\item 
			For $X\in\mathcal{X}_r$, $BXC\in\mathcal{Y}_{r}$ if and only if $X\in\mathcal{S}_r$.
			\label{item:well-defined_and_reducing}
		\item 
			For $Y\in\mathcal{Y}_{r}$ it holds that $B^\dagger YC^\dagger\in\mathcal{S}_{r,*}$.
			\label{item:inverse_range}
		\item
			For $X\in\mathcal{S}_{r,*}$ there exists $Y\in\mathcal{Y}_{r}$ such that $X=B^\dagger YC^\dagger$ on $\dom{C^\dagger}$.
			\label{item:inverse_range_description}

	\end{enumerate}
\end{lemma}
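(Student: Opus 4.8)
The plan is to prove the four statements in the order listed, treating $A\coloneqq P_{\overline{\ran{B}}}MP_{\ker{C}^\perp}$ as a fixed target and invoking \Cref{lemma:projected_problem}, \Cref{prop:eckart_young}, and the Moore--Penrose identities of \Cref{prop:moore_penrose} throughout. The identities I would lean on are $C^\dagger C=P_{\ker{C}^\perp}$ and $B^\dagger B=P_{\ker{B}^\perp}$ from \eqref{eqn:third_moore_penrose}, together with $CC^\dagger=P_{\overline{\ran{C}}}$ and $BB^\dagger=P_{\overline{\ran{B}}}$ on the respective pseudoinverse domains from \eqref{eqn:fourth_moore_penrose}, the inclusion $\ran{B^\dagger}\subset\ker{B}^\perp$, and the structural fact that any $Y\in\mathcal{Y}_r$, being a truncated SVD of $A$, has left singular vectors in $\overline{\ran{B}}$ and right singular vectors in $\ker{C}^\perp$.

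For \cref{item:surjectivity}, well-definedness of $B^\dagger Y C^\dagger$ on $\dom{C^\dagger}$ is a domain chase: $C^\dagger$ maps $\dom{C^\dagger}$ into $\ker{C}^\perp$, while $Y$ is bounded with $\ran{Y}\subset\dom{B^\dagger}$ by definition of $\mathcal{Y}_r$, so $B^\dagger$ may be applied. The rank bound $\dim\ran{B^\dagger YC^\dagger}\le\dim\ran{Y}\le r$ is immediate. The identity $BB^\dagger YC^\dagger C=Y$ then follows from $YC^\dagger C=YP_{\ker{C}^\perp}=Y$ (right singular vectors of $Y$ lie in $\ker{C}^\perp$) and $BB^\dagger Y=P_{\overline{\ran{B}}}Y=Y$ (using $\ran{Y}\subset\dom{B^\dagger}\cap\overline{\ran{B}}$). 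In particular $B(B^\dagger YC^\dagger)C=Y$ is finite-rank, hence Hilbert--Schmidt, so $B^\dagger YC^\dagger\in\mathcal{X}_r$; optimality follows from \Cref{lemma:projected_problem}, since $\norm{M-B(B^\dagger YC^\dagger)C}_{HS}^2=\norm{A-Y}_{HS}^2+c$ and $Y$ is a best rank-$r$ approximation of $A$ over all of $\B_{00,r}$ by \Cref{prop:eckart_young}, whereas every competitor $B\tilde{X}C$ lies in $\B_{00,r}$.

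The heart of \cref{item:well-defined_and_reducing} is the identity $\inf_{\tilde{X}\in\mathcal{X}_r}\norm{A-B\tilde{X}C}_{HS}=\min_{Y\in\B_{00,r}}\norm{A-Y}_{HS}$; granting this, both directions follow from \Cref{lemma:projected_problem} together with the inclusion $\{B\tilde{X}C:\tilde{X}\in\mathcal{X}_r\}\subset\B_{00,r}$ and the observation $\ran{BXC}\subset\ran{B}\subset\dom{B^\dagger}$. I expect this infimum identity to be the main obstacle, precisely because the reachable operators $BXC$ have range in $\ran{B}$ and thus form a proper subset of $\B_{00,r}$. I would resolve it by density: for a truncated SVD $(A)_r=\sum_{i\le r}\sigma_i u_iv_i^*$ the singular vectors satisfy $u_i\in\overline{\ran{B}}$ and $v_i\in\ker{C}^\perp$, so they admit norm approximations $\hat{u}_i\in\ran{B}$ and $\hat{v}_i\in\ran{C^*}$ (using that $\ran{C^*}$ is dense in $\ker{C}^\perp$); writing $\hat{u}_i=B(B^\dagger\hat{u}_i)$ and $\hat{v}_i=C^*e_i$ exhibits $\sum_i\sigma_i\hat{u}_i\hat{v}_i^*$ as a reachable $B\tilde{X}C$ converging to $(A)_r$, forcing the two infima to agree. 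The easy direction ($BXC\in\mathcal{Y}_r\Rightarrow X\in\mathcal{S}_r$) needs only that optimality over $\B_{00,r}$ implies optimality over its reachable subset.

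Finally, \cref{item:inverse_range} and \cref{item:inverse_range_description} are short consequences of the first two parts. For \cref{item:inverse_range}, given $Y\in\mathcal{Y}_r$ and $X=B^\dagger YC^\dagger\in\mathcal{S}_r$ from \cref{item:surjectivity}, the minimality property \eqref{eqn:minimality_constraint} follows from $P_{\ker{B}^\perp}B^\dagger=B^\dagger$ (as $\ran{B^\dagger}\subset\ker{B}^\perp$) and $C^\dagger P_{\overline{\ran{C}}}=C^\dagger CC^\dagger=C^\dagger$ on $\dom{C^\dagger}$, once one checks that $P_{\overline{\ran{C}}}$ preserves $\dom{C^\dagger}$. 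For \cref{item:inverse_range_description}, given $X\in\mathcal{S}_{r,*}$, \cref{item:well-defined_and_reducing} yields $Y\coloneqq BXC\in\mathcal{Y}_r$, and then on $\dom{C^\dagger}$ one computes $B^\dagger YC^\dagger=B^\dagger BXCC^\dagger=P_{\ker{B}^\perp}XP_{\overline{\ran{C}}}=X$, the last equality being exactly the constraint \eqref{eqn:minimality_constraint} defining $\mathcal{S}_{r,*}$. The only care required here is tracking domains so that each application of $B$, $C$, $B^\dagger$, $C^\dagger$, and the projectors is legitimate.
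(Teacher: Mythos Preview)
Your argument is correct; items \ref{item:surjectivity}, \ref{item:inverse_range}, and \ref{item:inverse_range_description} match the paper essentially verbatim (the paper writes $P_{\ker{B}^\perp}B^\dagger=B^\dagger BB^\dagger=B^\dagger$ via \eqref{eqn:second_moore_penrose}--\eqref{eqn:third_moore_penrose} where you invoke $\ran{B^\dagger}\subset\ker{B}^\perp$, but these are the same fact). The divergence is in item \ref{item:well-defined_and_reducing}.

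For the converse direction of \ref{item:well-defined_and_reducing} (that $X\in\mathcal{S}_r$ forces $BXC\in\mathcal{Y}_r$), you establish the infimum identity $\inf_{\tilde{X}\in\mathcal{X}_r}\norm{A-B\tilde{X}C}_{HS}=\min_{\tilde{Y}\in\B_{00,r}}\norm{A-\tilde{Y}}_{HS}$ by a density argument, approximating the singular vectors of $(A)_r$ from inside $\ran{B}$ and $\ran{C^*}$. The paper takes a shorter route: it fixes some $Y\in\mathcal{Y}_r$, uses item \ref{item:surjectivity} to get $BB^\dagger YC^\dagger C=Y$ and $B^\dagger YC^\dagger\in\mathcal{S}_r$, and then chains
\[
\norm{A-\tilde{Y}}_{HS}^2\ \ge\ \norm{A-Y}_{HS}^2\ =\ \norm{M-B(B^\dagger YC^\dagger)C}_{HS}^2-c\ \ge\ \norm{M-BXC}_{HS}^2-c\ =\ \norm{A-BXC}_{HS}^2
\]
for every $\tilde{Y}\in\B_{00,r}$. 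This avoids any approximation but tacitly presupposes $\mathcal{Y}_r\neq\emptyset$; your density argument does not, and so also delivers the implication $\mathcal{S}_r\neq\emptyset\Rightarrow\mathcal{Y}_r\neq\emptyset$ self-containedly. The construction you sketch is exactly the mechanism the paper deploys later in \Cref{subsec:approximation} for approximate minimisers, so you are in effect pulling that idea forward. Either route works; the paper's is cleaner once one grants $\mathcal{Y}_r\neq\emptyset$, yours is more self-contained.
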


If we define the map $j:\mathcal{X}_r\rightarrow \B_{00,r}(\mathcal{H}_1,\mathcal{H}_4)$, $X\mapsto BXC$, then the ``if'' part of \ref{item:well-defined_and_reducing} in \Cref{lemma:solution_characterisation} implies that $j(\mathcal{S}_r)\subset\mathcal{Y}_{r}$ and the ``only if'' part implies that $j(\mathcal{X}_r\backslash\mathcal{S}_r)\subset \B_{00,r}(\mathcal{H}_1,\mathcal{H}_4)\backslash\mathcal{Y}_{r}$. \Cref{item:surjectivity} states that $\restr{j}{\mathcal{S}_r}$ maps onto $\mathcal{Y}_{r}$, hence has a right inverse, and that a right inverse is given by the map $Y\mapsto B^\dagger Y C^\dagger$ on $\dom{C^\dagger}$. By \cref{item:inverse_range}, the image of this right inverse maps into $\mathcal{S}_{r,*}$. By \cref{item:inverse_range_description}, every $X\in\mathcal{S}_{r,*}$ agrees with $B^\dagger Y C^\dagger$ on $\dom{C^\dagger}$, for some $Y\in\mathcal{Y}_{r}$. 	

\begin{proof}[Proof of \Cref{lemma:solution_characterisation}]
	\ref{item:surjectivity}\quad Let $Y\in\mathcal{Y}_{r}$. That is, let $Y$ be a rank-$r$ truncated SVD of $P_{\overline{\ran{B}}}MP_{\ker{C}^\perp}$ satisfying $\ran{Y}\subset\dom{B^\dagger}$. 
	It follows that $B^\dagger Y$ is well-defined on all of $\mathcal{H}_2$. By \Cref{lemma:unbounded_after_finite_rank}, $B^\dagger Y$ is in fact bounded and of rank at most $r$. Thus, by the definition of compositions of linear maps in \Cref{subsec:linear_operators},
	\begin{align*}
		\dom{B^\dagger YC^\dagger} = (C^\dagger)^{-1}\left(\dom{B^\dagger Y}\right)= \dom{C^\dagger},
	\end{align*}
	and 
	\begin{align*}
		\dim\ran{B^\dagger YC^\dagger} \leq \rank{Y}\leq r.
	\end{align*}
	To show that $B^\dagger Y C^\dagger\in\mathcal{X}_r$, it remains to show $B(B^\dagger YC^\dagger)C\in L_2(\mathcal{H}_1,\mathcal{H}_4)$. Now, $BB^\dagger$ is the identity on $\ran{B}$ by \eqref{eqn:fourth_moore_penrose}. As $\ran{Y}\subset\ran{B}$, it holds that $BB^\dagger Y = Y$. Further,
	\begin{align*}
		\ker{Y}\supset\ker{P_{\overline{\ran{B}}}MP_{\ker{C}^\perp}}\supset\ker{P_{\ker{C}^\perp}} = \ker{C},
	\end{align*}
	where the first inclusion holds because $Y$ is a truncated SVD of $P_{\overline{\ran{B}}}MP_{\ker{C}^\perp}$. The inclusion $\ker{Y}\supset\ker{C}$ and \eqref{eqn:third_moore_penrose} imply $Y = YP_{\ker{Y}^\perp} = YP_{\ker{C}^\perp} = Y C^\dagger C$. We conclude,
	\begin{align}
		B(B^\dagger YC^\dagger)C = YC^\dagger C = Y.
		\label{eqn:truncated_svd_identity}
	\end{align}
	In particular, $B(B^\dagger YC^\dagger)C$ has rank at most $r$ and is hence Hilbert--Schmidt. Thus, $B^\dagger YC^\dagger\in\mathcal{X}_r$. 	
	Finally, $B^\dagger YC^\dagger\in\mathcal{S}_r$, since for any $\tilde{X}\in\mathcal{X}_r$, applying \eqref{eqn:error_fried_torok_2} twice, \eqref{eqn:truncated_svd_identity} and the definition of $Y$ yields
	\begin{align*}
		\norm{M-B\tilde{X}C}_{L_2}^2 &=  \norm{P_{\overline{\ran{B}}}MP_{\ker{C}^\perp}-B\tilde{X}C}_{L_2}^2+c 
		\geq \norm{P_{\overline{\ran{B}}}MP_{\ker{C}^\perp}-Y}_{L_2}^2+c \\
		&= \norm{P_{\overline{\ran{B}}}MP_{\ker{C}^\perp}-BB^\dagger YC^\dagger C}_{L_2}^2 +c = \norm{M-B(B^\dagger Y C^\dagger)C}_{L_2}^2.
	\end{align*}
	\ref{item:well-defined_and_reducing}\quad Let $X\in\mathcal{X}_r$, and let us assume that $BXC\in\mathcal{Y}_{r}$. By applying \eqref{eqn:error_fried_torok_2} twice and by the definition of $\mathcal{Y}_{r}$, we have for all $\tilde{X}\in\mathcal{X}_r$,
	\begin{align*}
		\norm{M-B\tilde{X}C}_{L_2}^2 &=  \norm{P_{\overline{\ran{B}}}MP_{\ker{C}^\perp} - B\tilde{X}C}_{L_2}^2+c \\
		&\geq\norm{P_{\overline{\ran{B}}}MP_{\ker{C}^\perp} - B{X}C}_{L_2}^2 + c = \norm{M - B{X}C}_{L_2}^2.
	\end{align*}
	Hence, $X\in\mathcal{S}_r$.
	Conversely, let us assume $X\in\mathcal{S}_r$. Let $Y\in\mathcal{Y}_{r}$. By \cref{item:surjectivity}, $B^\dagger YC^\dagger\in\mathcal{S}_r$. Using \eqref{eqn:error_fried_torok_2} twice and \cref{item:surjectivity}, we find for all $\tilde{Y}\in\B_{00,r}(\mathcal{H}_1,\mathcal{H}_4)$,
	\begin{align*}
		\norm{P_{\overline{\ran{B}}}MP_{\ker{C}^\perp}-\tilde{Y}}_{L_2}^2 
		&\geq \norm{P_{\overline{\ran{B}}}MP_{\ker{C}^\perp}-Y}_{L_2}^2 
		= \norm{M-B(B^\dagger YC^\dagger)C}_{L_2}^2 - c \\
		&\geq  \norm{M-BXC}_{L_2}^2 - c 
		= \norm{P_{\overline{\ran{B}}}MP_{\ker{C}^\perp}-BXC}_{L_2},
	\end{align*}
	where we used $Y\in\mathcal{Y}_{r}$ for the first inequality and $X\in\mathcal{S}_r$ for the second inequality.  Thus $BXC\in\mathcal{Y}_{r}$.
	\newline
	\ref{item:inverse_range}\quad Let $Y\in\mathcal{Y}_{r}$. By \cref{item:surjectivity}, $B^\dagger YC^\dagger\in\mathcal{S}_r$. By \eqref{eqn:second_moore_penrose}, \eqref{eqn:third_moore_penrose} and \eqref{eqn:fourth_moore_penrose},
	\begin{align*}
		P_{\ker{B}^\perp}B^\dagger YC^\dagger P_{\overline{\ran{C}}}=B^\dagger BB^\dagger YC^\dagger CC^\dagger=B^\dagger YC^\dagger\quad\text{on }\dom{C^\dagger}.
	\end{align*}
	Thus, $B^\dagger YC^\dagger\in\mathcal{S}_{r,*}$.
	\newline
	\ref{item:inverse_range_description}\quad Let $X\in\mathcal{S}_{r,*}\subset\mathcal{S}_r$, so $Y\coloneqq BXC\in\mathcal{Y}_{r}$ by \cref{item:well-defined_and_reducing}. By definition of $\mathcal{S}_{r,*}$  it holds that $X=P_{\ker{B}^\perp}XP_{\overline{\ran{C}}}$ on $\dom{C^\dagger}\subset\dom{X}$. Hence, using \eqref{eqn:third_moore_penrose} and \eqref{eqn:fourth_moore_penrose}, $X=B^\dagger (BXC)C^\dagger=B^\dagger YC^\dagger$ on $\dom{C^\dagger}$.

\end{proof}

\begin{proof}[Proof of \Cref{thm:main}]
	Suppose that $\mathcal{Y}_{r}\not=\emptyset$ and let $Y\in\mathcal{Y}_{r}$. Then \Cref{lemma:solution_characterisation} \cref{item:inverse_range} gives that $B^\dagger Y C^\dagger\in\mathcal{S}_{r,*}$, i.e. \eqref{eqn:hat_X} solves \eqref{eqn:reduced_rank_approximation_problem} and satisfies \eqref{eqn:minimality_constraint}. Conversely, suppose that \eqref{eqn:reduced_rank_approximation_problem} admits a solution $\hat{X}\in\mathcal{X}_r$, so $\hat{X}\in\mathcal{S}_r$. By \Cref{lemma:solution_characterisation} \cref{item:well-defined_and_reducing} and \Cref{prop:eckart_young}, $Y\coloneqq B\hat{X}C$ is a truncated rank-$r$ SVD of $P_{\overline{\ran{B}}}MP_{\ker{C}^\perp}$. But $\ran{Y}\subset\ran{B}\subset\dom{B^\dagger}$, so $Y\in\mathcal{Y}_{r}$.

	It remains to show that all elements of $S_{r,*}$ agree on $\dom{C^\dagger}$ if and only if condition \ref{eqn:uniqueness_condition_a} or \ref{eqn:uniqueness_condition_b} holds. By the construction of the SVD in \Cref{subsec:svd}, $\mathcal{Y}_{r}$ contains at most one element if and only if \ref{eqn:uniqueness_condition_a} or \ref{eqn:uniqueness_condition_b} holds. If $\mathcal{Y}_r$ contains at most one element, then by \cref{item:inverse_range_description} of \Cref{lemma:solution_characterisation}, all elements of $\mathcal{S}_{r,*}$ agree on $\dom{C^\dagger}$. Conversely, if all elements of $S_{r,*}$ agree on $\dom{C^\dagger}$, and if $Y_1,Y_2\in \mathcal{Y}_r$, then $B^\dagger Y_1 C^\dagger$ and $B^\dagger Y_2 C^\dagger$ agree on $\dom{C}^\dagger$ by \Cref{lemma:solution_characterisation}\ref{item:inverse_range}. Hence by \Cref{lemma:solution_characterisation}\ref{item:surjectivity}, $Y_1 = B(B^\dagger Y_1C^\dagger) C = B(B^\dagger Y_2C^\dagger) C=Y_2$, so that $\mathcal{Y}_r$ has at most one element. This concludes the proof of uniqueness.
\end{proof}

As a corollary to \Cref{thm:main} we obtain the result of \cite[Theorem 2.1]{friedland_generalized_2007} but with the modified minimality property. We recall that $P_{A,L}$ and $P_{A,R}$ are defined in \Cref{sec:finite_dim_case} and that $P_{A,L}= A A^\dagger$ and $P_{A,R}=  A^\dagger A$.
\begin{corollary}
	\label{cor:friedland_torokthi_case}
	Let $B\in\C^{m\times p}$, $C\in\C^{q\times n}$ and $M\in\C^{m\times  n}.$ Then 
	\begin{align*}
		\hat{X}=B^\dagger(P_{B,L}MP_{C,R})_rC^\dagger
	\end{align*}
	is a solution to the problem
	\begin{align*}
		\min\{\norm{M-BXC}_{F}:\ X\in\C^{p\times q},\rank{X}\leq r\}.
	\end{align*}
	Furthermore, $\hat{X}$ satisfies
	\begin{align}
		\label{eqn:corrected_miniminality_constraint}
		X=P_{B,R}XP_{C,L}.
	\end{align}
	This solution is the unique solution satisfying \eqref{eqn:corrected_miniminality_constraint} if and only if one of the following conditions holds:
	\begin{enumerate}[label=(\alph*)]
		\item
			$r\geq \rank{P_{B,L}MP_{C,R}},$
		\item
			$\sigma_r({P_{B,L}MP_{C,R}})>\sigma_{r+1}({P_{B,L}MP_{C,R}}).$
	\end{enumerate}
\end{corollary}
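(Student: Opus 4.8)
The plan is to derive \Cref{cor:friedland_torokthi_case} as the finite-dimensional specialisation of \Cref{thm:main}. I would set $\mathcal{H}_1=\C^n$, $\mathcal{H}_2=\C^q$, $\mathcal{H}_3=\C^p$ and $\mathcal{H}_4=\C^m$, and identify $M$, $B$, $C$ with the given matrices. Since all four spaces are finite-dimensional, every operator is bounded and has closed range, so $M\in L_2(\mathcal{H}_1,\mathcal{H}_4)$ holds trivially and \Cref{setting:main} is satisfied. Because the Hilbert--Schmidt norm coincides with the Frobenius norm on matrices, the objective in \eqref{eqn:reduced_rank_approximation_problem} is exactly the objective of the corollary, and $X:\mathcal{H}_2\to\mathcal{H}_3$ is precisely an element of $\C^{p\times q}$.

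First I would verify the existence hypothesis $\mathcal{Y}_{r}\not=\emptyset$. Because $B$ has closed range, \Cref{rmk:closed_range_case} gives $\dom{B^\dagger}=\mathcal{H}_4$, so the domain requirement in the definition of $\mathcal{Y}_{r}$ is vacuous and $\mathcal{Y}_{r}=\{(P_{\overline{\ran{B}}}MP_{\ker{C}^\perp})_r\}\not=\emptyset$. Likewise, because $C$ has closed range, the same remark gives $\mathcal{X}_r=\B_{00,r}(\mathcal{H}_2,\mathcal{H}_3)=\{X\in\C^{p\times q}:\rank{X}\le r\}$, matching the feasible set of the corollary, and it moreover ensures that every solution satisfying the minimality constraint is of the form \eqref{eqn:hat_X}. \Cref{thm:main} then yields that $\hat{X}=B^\dagger(P_{\overline{\ran{B}}}MP_{\ker{C}^\perp})_rC^\dagger$ solves the problem and satisfies \eqref{eqn:minimality_constraint}.

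The remaining work is purely notational translation between the projectors of \Cref{sec:finite_dim_case} and those of \Cref{thm:main}. Using the identities $P_{B,L}=BB^\dagger$, $P_{C,R}=C^\dagger C$, $P_{B,R}=B^\dagger B$ and $P_{C,L}=CC^\dagger$ recalled just before the corollary, together with the fact that in finite dimensions $\overline{\ran{B}}=\ran{B}$ and $\overline{\ran{C}}=\ran{C}$, I would identify $P_{\overline{\ran{B}}}=P_{B,L}$, $P_{\ker{C}^\perp}=P_{C,R}$, $P_{\ker{B}^\perp}=P_{B,R}$ and $P_{\overline{\ran{C}}}=P_{C,L}$. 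Substituting these identifications turns \eqref{eqn:hat_X} into the stated formula for $\hat{X}$ and turns \eqref{eqn:minimality_constraint} into \eqref{eqn:corrected_miniminality_constraint}. For the uniqueness claim I would translate the conditions directly: condition \ref{eqn:uniqueness_condition_a}, namely $P_{\overline{\ran{B}}}MP_{\ker{C}^\perp}\in\B_{00,r}$, becomes $\rank{P_{B,L}MP_{C,R}}\le r$, i.e. condition (a), while condition \ref{eqn:uniqueness_condition_b} is condition (b) verbatim. Since $\dom{C^\dagger}=\C^q$ here, the theorem's uniqueness ``on $\dom{C^\dagger}$'' among solutions of the form \eqref{eqn:hat_X} is, by the preceding remark, genuine equality of matrices among all minimality-satisfying solutions.

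Since \Cref{thm:main} carries out all the analytic work, there is no substantial obstacle; the only point requiring care is checking that the finite-dimensional closed-range structure collapses the distinction between $AA^\dagger$ and $P_{\overline{\ran{A}}}$ (which, as noted in \Cref{subsec:notation}, genuinely differ in infinite dimensions), so that the four projector identifications above are exact everywhere rather than merely on a dense subspace.
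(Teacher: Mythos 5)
Your proposal is correct and follows essentially the same route as the paper: specialise \Cref{thm:main} to finite-dimensional spaces, invoke \Cref{rmk:closed_range_case} to get $\mathcal{Y}_r\neq\emptyset$ and $\mathcal{X}_r=\B_{00,r}$, and translate the projectors $P_{\overline{\ran{B}}},P_{\ker{C}^\perp},P_{\ker{B}^\perp},P_{\overline{\ran{C}}}$ into $P_{B,L},P_{C,R},P_{B,R},P_{C,L}$. The only cosmetic difference is that the paper phrases the identification through an explicit matrix-to-operator map $A\mapsto L_A$ and records the compatibilities $L_A^\dagger=L_{A^\dagger}$, $(L_A)_r=L_{(A)_r}$, etc., which your argument performs implicitly.
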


\begin{proof}
	We associate a matrix $A\in\C^{t\times s}$ with the linear operator $L_A\in\B(\C^s,\C^t)$ that left multiplies $x\in\C^s$ with $A$, i.e $L_Ax=Ax$. Hence $A$ is the expression of $L_A$ in the orthogonal standard bases of $\C^s$ and $\C^t$, so we have the equality of norms $\norm{L_A}_{L_2}=\norm{A}_F$. Furthermore, one may check that $L_{A}^\dagger=L_{A^\dagger}$, $(L_A)_r=L_{(A)_r}$, $P_{\overline{\ran{L_A}}}=P_{\ran{L_A}}=P_{\ran{A}}=L_{P_{A,L}}$ and $P_{\ker{L_A}^\perp}=P_{\ker{A}^\perp}=L_{P_{A,R}}$. As all finite-dimensional linear maps are Hilbert--Schmidt operators and have closed range, Remark \ref{rmk:closed_range_case} implies that we can apply \Cref{thm:main} to $L_M,L_B,L_C$ with $\mathcal{X}_r=\B_{00,r}(\C^q,\C^p)\simeq\{X\in\C^{p\times q},\rank{X}\leq r\}$ and where $\mathcal{Y}_{r}$ consists of the truncated rank-$r$ SVDs of $P_{{\ran{B}}}L_MP_{\ker{C}^\perp}$. The result now follows from the conclusions of \Cref{thm:main} by expressing the linear operators again as matrices.
\end{proof}

\Cref{thm:main} generalises the statement in \cite{friedland_generalized_2007} about existence and uniqueness of $\hat{X}$ of the optimal low-rank approximation problem \eqref{eqn:matrix_problem}--\eqref{eqn:matrix_constraint} in the finite-dimensional setting to the setting of possibly infinite-dimensional separable Hilbert spaces. The minimality property \eqref{eqn:matrix_constraint} in \cite[Theorem 2.1]{friedland_generalized_2007} has been replaced by \eqref{eqn:minimality_constraint} in \Cref{thm:main}. 

We now present an interpretation of \eqref{eqn:minimality_constraint} that parallels the interpretation of property \eqref{eqn:minimality_condition} in the finite-dimensional setting given at the end of \Cref{sec:finite_dim_case}.
Let us take $X\in\mathcal{X}_r$. Then $\dom{X}\supset\ran{C}$, so $\mathcal{D}\coloneqq\dom{X}\cap\overline{\ran{C}}$ contains $\ran{C}$. Therefore, $\mathcal{D}\subset\overline{\ran{C}}$ densely and $\mathcal{D}^\perp=\ran{C}^\perp$. Also, $\dom{X} = \mathcal{D}\oplus \ran{C}^\perp$. Indeed, when $h\in \dom{X}\subset\mathcal{H}_2=\overline{\ran{C}}\oplus \ran{C}^\perp$, then $h=h_1+h_2$ for some $h_1\in\overline{\ran{C}},h_2\in\ran{C}^\perp\subset\dom{C^\dagger}\subset\dom{X}$, hence $h_1=h-h_2\in \dom{X}$.

Let us decompose $\mathcal{H}_3 = \ker{B}^\perp \oplus \ker{B}$ and $\dom{X} = \mathcal{D}\oplus \ran{C}^\perp$ . We define
\begin{align*}
	X_{11}&\coloneqq \restr{\left(P_{\ker{B}^\perp} X P_{\overline{\ran{C}}}\right)}{\mathcal{D}}:\mathcal{D}\subset\overline{\ran{C}}\rightarrow \ker{B}^\perp,\\
	X_{12}&\coloneqq \restr{\left(P_{\ker{B}^\perp}XP_{\ran{C}^\perp}\right)}{\ran{C}^\perp}:\ran{C}^\perp\rightarrow \ker{B}^\perp,\\
	X_{21}&\coloneqq \restr{\left(P_{\ker{B}} X P_{\overline{\ran{C}}}\right)}{\mathcal{D}}:\mathcal{D}\subset\overline{\ran{C}}\rightarrow \ker{B},\\
X_{22}&\coloneqq \restr{\left(P_{\ker{B}}XP_{\ran{C}^\perp}\right)}{\ran{C}^\perp}:\ran{C}^\perp\rightarrow \ker{B}.
\end{align*}
While $X_{12},X_{22}$ are defined everywhere on $\ran{C}^\perp$, $X_{11},X_{21}$ are densely defined in $\overline{\ran{C}}$, as $\mathcal{D}\subset\overline{\ran{C}}$ densely. By \Cref{prop:moore_penrose} \cref{item:mp_inv_boundedness}, $X_{11},X_{21}$ are everywhere defined when $\ran{C}$ is closed. We can now write $X$ relative to these decompositions of $\mathcal{H}_2$ and $\mathcal{H}_3$ as
\begin{align}
	X = 
	\begin{pmatrix}
		X_{11} & X_{12} \\
		X_{21} & X_{22}
	\end{pmatrix},
	\label{eqn:inf_dim_decomposition}
\end{align}
with the understanding that for $h_1+h_2\in \mathcal{D}\oplus\ran{C}^\perp= \dom{X}$,
\begin{align*}
	X(h_1,h_2) &= 
	\begin{pmatrix}
		X_{11} & X_{12} \\
		X_{21} & X_{22}
	\end{pmatrix}
	\begin{pmatrix}
		h_1 \\
		h_2
	\end{pmatrix}\\
	&=
	(X_{11}h_1+X_{12}h_2,X_{21}h_1+X_{22}h_2)
	\in
	\ker{B}^\perp\oplus \ker{B}.
\end{align*}
Since
\begin{align}
	\label{eqn:operator_decomposition}
	X = P_{\ker{B}^\perp}XP_{\overline{\ran{C}}}+(I-P_{\ker{B}^\perp})XP_{\overline{\ran{C}}}+X(I-P_{\overline{\ran{C}}}),
\end{align}
the minimality condition \eqref{eqn:minimality_constraint} is equivalent to both $X(I-P_{\overline{\ran{C}}})=0$ and $(I-P_{\ker{B}^\perp})XP_{\overline{\ran{C}}}=0$ on $\dom{X}$. It follows, respectively, that $X_{12}=X_{22}=0$ and that $X_{21}=0$. This generalises the interpretation of \eqref{eqn:minimality_condition} in \Cref{sec:finite_dim_case} to infinite-dimensional separable Hilbert spaces: the minimality of $\hat{X}$ of the form \eqref{eqn:hat_X} is in the sense that all blocks in \eqref{eqn:inf_dim_decomposition} except for the upper left block $X_{11}$ are zero.

As noted in \Cref{rmk:solution_extension}, we have characterised $\mathcal{S}_{r,*}\cap\{\dom{X}=\dom{C}^\dagger\}$. We can now characterise $\mathcal{S}_r\cap\{\dom{X}=\dom{C}^\dagger \}$, that is, we characterise \textit{all} solutions of \eqref{eqn:reduced_rank_approximation_problem} on $\dom{C^\dagger}$, that need not satisfy the minimality property \eqref{eqn:minimality_constraint}.
\begin{corollary}
	\label{cor:all_solutions}
	The problem \eqref{eqn:reduced_rank_approximation_problem} admits a solution if and only if $\mathcal{Y}_r\not=\emptyset$. In that case,
	\begin{align*}
	&\mathcal{S}_r\cap\left\{ \dom{X}=\dom{C}^\dagger \right\}
	\\
	=&\left\{ \hat{X} + P_{\ker{B}}T + SP_{\ran{C}^\perp}:\ \hat{X}\in\mathcal{S}_{r,*},\ T:\dom{C}^\dagger\subset \mathcal{H}_2\rightarrow \mathcal{H}_3,\ S:\ran{C}^\perp\subset\mathcal{H}_2\rightarrow\mathcal{H}_3 \right\}.
	\end{align*}
\end{corollary}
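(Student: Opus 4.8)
The plan is to first dispose of the existence claim and then prove the set equality by two inclusions. Existence is immediate from \Cref{thm:main}: the problem \eqref{eqn:reduced_rank_approximation_problem} has a minimiser exactly when $\mathcal{Y}_r\neq\emptyset$, in which case $\mathcal{S}_{r,*}\neq\emptyset$, so the right-hand set is nonempty as well. Both inclusions are driven by the same four projection identities, $BP_{\ker{B}^\perp}=B$, $P_{\overline{\ran{C}}}C=C$, $BP_{\ker{B}}=0$ and $P_{\ran{C}^\perp}C=0$, together with the orthogonal decomposition \eqref{eqn:operator_decomposition} and the projection identities already exploited in the proof of \Cref{lemma:solution_characterisation}.

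For the inclusion ``$\subseteq$'', I would take $X\in\mathcal{S}_r$ with $\dom{X}=\dom{C^\dagger}$ and set $\hat{X}\coloneqq P_{\ker{B}^\perp}XP_{\overline{\ran{C}}}$, $T\coloneqq XP_{\overline{\ran{C}}}$ and $S\coloneqq\restr{X}{\ran{C}^\perp}$. Then \eqref{eqn:operator_decomposition} gives $X=\hat{X}+P_{\ker{B}}T+SP_{\ran{C}^\perp}$ on $\dom{C^\dagger}$, where one uses $P_{\ran{C}^\perp}h\in\ran{C}^\perp\subset\dom{C^\dagger}$ to justify $SP_{\ran{C}^\perp}=XP_{\ran{C}^\perp}$. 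It then remains to verify $\hat{X}\in\mathcal{S}_{r,*}$: the identities $BP_{\ker{B}^\perp}=B$ and $P_{\overline{\ran{C}}}C=C$ give $B\hat{X}C=BXC$, so $\hat{X}$ attains the same optimal value as $X$; the rank bound $\dim\ran{\hat{X}}\leq\dim\ran{X}\leq r$ together with $B\hat{X}C\in L_2$ places $\hat{X}\in\mathcal{X}_r$ and hence in $\mathcal{S}_r$; and idempotency of the projections yields $P_{\ker{B}^\perp}\hat{X}P_{\overline{\ran{C}}}=\hat{X}$, which is exactly \eqref{eqn:minimality_constraint}. A minor point to record is that $\hat{X}$ is well defined on $\dom{C^\dagger}$, since $P_{\overline{\ran{C}}}$ maps $\dom{C^\dagger}=\ran{C}\oplus\ran{C}^\perp$ into $\ran{C}\subset\dom{X}$.

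For the inclusion ``$\supseteq$'', I would take $\hat{X}\in\mathcal{S}_{r,*}$ and operators $T,S$ as in the statement, and set $X\coloneqq\hat{X}+P_{\ker{B}}T+SP_{\ran{C}^\perp}$. The key computation is $BXC=B\hat{X}C$: the term $P_{\ker{B}}T$ is annihilated on the left by $BP_{\ker{B}}=0$, and the term $SP_{\ran{C}^\perp}$ is annihilated on the right by $P_{\ran{C}^\perp}C=0$. Hence $X$ attains the optimal objective value and $\dom{X}=\dom{C^\dagger}$. I expect this direction to be where the genuine care is needed: attaining the optimal value is cheap because the objective $X\mapsto\norm{M-BXC}_{HS}$ is blind to the two correction terms, whereas \emph{feasibility} $X\in\mathcal{X}_r$ --- in particular the rank constraint $\dim\ran{X}\leq r$ --- is not automatic, since $P_{\ker{B}}T$ and $SP_{\ran{C}^\perp}$ may have large range. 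This is the main obstacle, and the inclusion should accordingly be read with $X$ constrained to $\mathcal{X}_r$; consistency with the ``$\subseteq$'' direction is precisely what guarantees that the $T,S$ produced there satisfy the rank bound.

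In summary, the whole argument reduces to the four projection identities, the orthogonal splitting \eqref{eqn:operator_decomposition}, and the idempotency argument already used for \Cref{lemma:solution_characterisation}, with the only delicate point being the rank/feasibility bookkeeping in the ``$\supseteq$'' direction, where $\hat{X}$, $T$ and $S$ are coupled solely through the requirement that their sum lie in $\mathcal{X}_r$.
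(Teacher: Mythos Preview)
Your approach is essentially the paper's. Both directions use the orthogonal splitting \eqref{eqn:operator_decomposition} and the projection identities $BP_{\ker{B}}=0$, $P_{\ran{C}^\perp}C=0$. The only substantive difference is in the ``$\subseteq$'' direction: you define $\hat{X}\coloneqq P_{\ker{B}^\perp}XP_{\overline{\ran{C}}}$ and verify $\hat{X}\in\mathcal{S}_{r,*}$ by hand, whereas the paper writes $\hat{X}=B^\dagger BXCC^\dagger$ and invokes \Cref{lemma:solution_characterisation}\,\ref{item:well-defined_and_reducing},\ref{item:inverse_range} to obtain $\hat{X}\in\mathcal{S}_{r,*}$ directly; these two $\hat{X}$ coincide on $\dom{C^\dagger}$ by \eqref{eqn:third_moore_penrose}--\eqref{eqn:fourth_moore_penrose}, so the arguments are interchangeable. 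Your direct route is self-contained; the paper's route is shorter because the work is already packaged in the lemma.

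Your caution about the ``$\supseteq$'' direction is well placed. The paper's proof of that inclusion reads: ``$BXC=B\hat{X}C$. But $\hat{X}\in\mathcal{S}_r$, so $X\in\mathcal{S}_r$.'' It does \emph{not} verify $\dim\ran{X}\leq r$, i.e.\ it does not check $X\in\mathcal{X}_r$, and for generic $T,S$ this will indeed fail. So the issue you flag is present in the paper's own argument as well; the corollary, as written, tacitly takes the right-hand set intersected with $\mathcal{X}_r$ (equivalently, $T$ and $S$ are coupled through the rank constraint). Your reading --- that feasibility must be imposed on the triple $(\hat{X},T,S)$ --- is the correct one, and the ``$\subseteq$'' direction shows that every solution does arise this way with the constraint satisfied.
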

\begin{proof}
	It follows directly from \Cref{thm:main} that \eqref{eqn:reduced_rank_approximation_problem} has a solution if and only if $\mathcal{Y}_r\not=\emptyset$.
	Now let $X=\hat{X}+SP_{\ran{C}^\perp}+P_{\ker{B}}T$ with $\hat{X}$, $S$ and $T$ as specified. Then $\dom{X}=\dom{\hat{X}}=\dom{C}^\dagger$ and $BXC=B\hat{X}C$. But $\hat{X}\in\mathcal{S}_r$, so $X\in\mathcal{S}_r$. This shows the ``$\supset$'' inclusion holds. Conversely, let $X\in\mathcal{S}_r$ with $\dom{X}=\dom{C^\dagger}$. We define $\hat{X}=B^\dagger BXCC^\dagger$, so $\hat{X}\in\mathcal{S}_{r,*}$ by \Cref{lemma:solution_characterisation} \Cref{item:well-defined_and_reducing,item:inverse_range}. In particular, using also \eqref{eqn:third_moore_penrose} and \eqref{eqn:fourth_moore_penrose}, we have on $\dom{C}^\dagger=\dom{X}$ that $P_{\ker{B}^\perp}XP_{\overline{\ran{C}}}=B^\dagger BXCC^\dagger =\hat{X}$. We also define 
	$S\coloneqq XP_{\ran{C}^\perp}$ and $T\coloneqq \restr{P_{\ker{B}}XP_{\overline{\ran{C}}}}{\dom{C}^\dagger}$. 
	Thus, $S$ and $T$ are well-defined, as $P_{\ran{C}^\perp}$ and $\restr{P_{\overline{\ran{C}}}}{\dom{C}^\dagger}$ both map into $\dom{C}^\dagger=\dom{X}$. Furthermore, $S=SP_{\ran{C}^\perp}$ and $T=P_{\ker{B}}T$. Hence using \eqref{eqn:operator_decomposition}:
	\begin{align*}
		X &= P_{\ker{B}^\perp}XP_{\overline{\ran{C}}}+P_{\ker{B}}XP_{\overline{\ran{C}}}+XP_{\ran{C}^\perp} \\
		&= \hat{X}+P_{\ker{B}}T+SP_{\ran{C}^\perp},
	\end{align*}
	where the second equality only holds on $\dom{X}=\dom{C}^\dagger$.
\end{proof}

We end this section by noting that the solutions of \eqref{eqn:reduced_rank_approximation_problem} are all unbounded when only the solutions that also satisfy the minimality property \eqref{eqn:minimality_constraint} are unbounded.

\begin{corollary}
	If $\mathcal{S}_{r,*}$ contains only unbounded solutions, then all solutions of problem \eqref{eqn:reduced_rank_approximation_problem} are unbounded.
	\label{cor:all_solutions_unbounded_when_minimal_solutions_unbounded}
\end{corollary}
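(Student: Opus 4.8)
The plan is to prove the contrapositive: if some solution $X\in\mathcal{S}_r$ is bounded, then $\mathcal{S}_{r,*}$ contains a bounded solution as well. First I would fix a bounded $X\in\mathcal{S}_r$ and pass to its ``minimality-constrained part'' exactly as in the proof of \Cref{cor:all_solutions}, namely
\[
	\hat{X}\coloneqq B^\dagger B\,X\,CC^\dagger\quad\text{on }\dom{C^\dagger}.
\]
By \Cref{lemma:solution_characterisation} \cref{item:well-defined_and_reducing,item:inverse_range} we have $BXC\in\mathcal{Y}_r$ and hence $\hat{X}\in\mathcal{S}_{r,*}$, so $\hat{X}$ is genuinely a solution satisfying the minimality property \eqref{eqn:minimality_constraint}.

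Next I would rewrite $\hat{X}$ in a form that makes its boundedness transparent. Using the Moore--Penrose identities \eqref{eqn:third_moore_penrose} and \eqref{eqn:fourth_moore_penrose}, that is $B^\dagger B=P_{\ker{B}^\perp}$ and $CC^\dagger=P_{\overline{\ran{C}}}$ on $\dom{C^\dagger}$, one obtains
\[
	\hat{X}=P_{\ker{B}^\perp}\,X\,P_{\overline{\ran{C}}}\quad\text{on }\dom{C^\dagger}.
\]
Both orthogonal projectors are contractions, and $P_{\overline{\ran{C}}}$ maps $\dom{C^\dagger}=\ran{C}\oplus\ran{C}^\perp$ into $\ran{C}\subset\dom{C^\dagger}\subset\dom{X}$, so the composition is well defined on all of $\dom{C^\dagger}$. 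Since $X$ is bounded and the projectors have norm at most one, $\hat{X}$ is a composition of bounded maps on $\dom{C^\dagger}$ and is therefore bounded. This exhibits a bounded element of $\mathcal{S}_{r,*}$, contradicting the hypothesis that $\mathcal{S}_{r,*}$ contains only unbounded solutions, and so completes the contrapositive.

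The delicate points, and where I expect the only real care is required, concern domains rather than any estimate. One must verify that $P_{\overline{\ran{C}}}$ leaves $\dom{C^\dagger}$ invariant, which follows from the decomposition $\dom{C^\dagger}=\ran{C}\oplus\ran{C}^\perp$ together with $\ran{C}\subset\dom{C^\dagger}$, so that $\hat{X}=P_{\ker{B}^\perp}XP_{\overline{\ran{C}}}$ really is defined on $\dom{C^\dagger}$ and the identity above holds there; and, when $\dom{X}$ is strictly larger than $\dom{C^\dagger}$, that boundedness of $X$ descends to its restriction to $\dom{C^\dagger}$, on which alone the construction is carried out. No genuine analytic obstacle arises: the essential content is that the minimality projection $X\mapsto P_{\ker{B}^\perp}XP_{\overline{\ran{C}}}$ is a contraction and hence cannot turn a bounded solution into an unbounded one.
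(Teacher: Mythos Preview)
Your proof is correct. Both your argument and the paper's hinge on the same construction appearing in \Cref{cor:all_solutions}: given $X\in\mathcal{S}_r$, one passes to the associated minimal solution $\hat{X}=B^\dagger BXCC^\dagger=P_{\ker{B}^\perp}XP_{\overline{\ran{C}}}\in\mathcal{S}_{r,*}$. The difference is in framing. You argue the contrapositive, observing that $X\mapsto P_{\ker{B}^\perp}XP_{\overline{\ran{C}}}$ is a composition with orthogonal projectors and therefore sends bounded operators to bounded operators. The paper argues directly: starting from an arbitrary $\tilde{X}\in\mathcal{S}_r$, it uses the decomposition $\tilde{X}=\hat{X}+P_{\ker{B}}T+SP_{\ran{C}^\perp}$ together with the Pythagorean identity (since $\hat{X}=P_{\ker{B}^\perp}\hat{X}$) to obtain the pointwise bound $\norm{\tilde{X}h}\geq\norm{\hat{X}h}$ for $h\in\ran{C}$, and then chases a diverging sequence to conclude that unboundedness of $\hat{X}$ forces unboundedness of $\tilde{X}$. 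Your contrapositive avoids the explicit sequence argument and is slightly cleaner; the paper's version has the minor advantage of making the comparison between $\tilde{X}$ and $\hat{X}$ quantitative rather than merely qualitative.
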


\begin{proof}
	Let $\tilde{X}$ be a solution of \eqref{eqn:reduced_rank_approximation_problem}. By \Cref{thm:main} we have $\mathcal{Y}_r\not=\emptyset$. Thus, by \Cref{lemma:solution_characterisation}\ref{item:inverse_range}, $\mathcal{S}_{r,*}\not=\emptyset$. By \Cref{cor:all_solutions}, it then holds on $\dom{C^\dagger}$ that $\tilde{X}=\hat{X} + P_{\ker{B}}T + SP_{\ran{C}^\perp}$ for some $\hat{X}\in\mathcal{S}_{r,*}$, $T:\dom{C}^\dagger\subset \mathcal{H}_2\rightarrow \mathcal{H}_3$ and $S:\ran{C}^\perp\subset\mathcal{H}_2\rightarrow\mathcal{H}_3$. We have $\hat{X}=P_{\ker{B}^\perp}\hat{X}$ by \eqref{eqn:minimality_constraint}. For $h\in\ran{C}$ it holds that $P_{\ran{C}^\perp}h=0$, hence,
	\begin{align*}
		\norm{\tilde{X}h}^2 = \norm{\hat{X}h + P_{\ker{B}}Th}^2 = \norm{\hat{X}h}^2+\norm{P_{\ker{B}}Th}^2\geq \norm{\hat{X}h}^2.
	\end{align*}
	Let us assume $\hat{X}$ is unbounded. Then there exists a bounded sequence $(h_n)_n$ in $\dom{C^\dagger}$ satisfying $\norm{\hat{X}h_n}\rightarrow\infty$. Since $\hat{X}$ satisfies \eqref{eqn:minimality_constraint}, $\hat{X}h_n=\hat{X}P_{\ran{C}}h_n$. Hence $k_n\coloneqq P_{\ran{C}}h_n\in\ran{C}$ is a bounded sequence and $\norm{\hat{X}k_n}$ diverges. But then also $\norm{\tilde{X}k_n}$ diverges, showing that $\tilde{X}$ is unbounded.
\end{proof}

\subsection{The case of no solutions and approximate minimisers}
\label{subsec:approximation}
Even when the condition $\mathcal{Y}_{r}\not=\emptyset$ with $\mathcal{Y}_r$ defined in \Cref{setting:main} fails to hold, that is, even when $\ran{Y}\subset\dom{B}^\dagger$ does not hold for any rank-$r$ reduced SVD $Y$ of $P_{\overline{\ran{B}}}MP_{\ker{C}^\perp}$, one can still construct a minimizing sequence for \eqref{eqn:reduced_rank_approximation_problem} that satisfies \eqref{eqn:minimality_constraint}. To do so, suppose $Y\coloneqq(P_{\overline{\ran{B}}}MP_{\ker{C}^\perp})_r$ has SVD $\sum_{i=1}^{r}\lambda_i f_i\otimes e_i$, with orthonormal sequences $(e_i)_{i=1}^{r}$ and $(f_i)_{i=1}^{r}$ in $\mathcal{H}_3$ and $\mathcal{H}_2$ respectively. We recall the notation `$\otimes$' from \Cref{sec:elements_fa}. Since $f_i\in\ran{Y}\subset\overline{\ran{B}}$, we can find $f_i^m\in{\ran{B}}$ such that $\norm{f_i-f_i^m}<1/n$ whenever $m\geq n$ and $n\in\N$. Let us define $Y^{(n)}\coloneqq \sum_{i=1}^{r}\lambda_i f_i^n\otimes e_i$. Then, 
\begin{align*}
	\norm{Y-Y^{(n)}}_{L_2}^2 = \sum_{i=1}^{r}\lambda_i^2\norm{f_i^n-f_i}^2\leq r\lambda_1^2n^{-2}\rightarrow 0,\quad n\rightarrow\infty.
\end{align*}
As in the proof of \Cref{lemma:solution_characterisation} \cref{item:surjectivity}, we see that $\hat{X}^{(n)}\coloneqq B^\dagger Y^{(n)}C^\dagger$ is a well-defined element of $\mathcal{X}_r$ satisfying \eqref{eqn:minimality_constraint} and $B\hat{X}^{(n)}C=Y^{(n)}$. In particular, $B\hat{X}^{(n)}C\rightarrow Y$ in $L_2(\mathcal{H}_1,\mathcal{H}_4)$. Using \Cref{lemma:projected_problem} twice and \Cref{prop:eckart_young}, it follows that for $X\in\mathcal{X}_r$,
\begin{align*}
	\inf\{\norm{M-BXC}_{L_2}^2:\ X\in\mathcal{X}_r\}&\leq \lim_n\norm{M-B\hat{X}^{(n)}C}_{L_2}^2\\
	&=  \lim_n\norm{P_{\overline{\ran{B}}}MP_{\ker{C}^\perp}-B\hat{X}^{(n)}C}_{L_2}^2+c\\
	&=\norm{P_{\overline{\ran{B}}}MP_{\ker{C}^\perp}-Y}_{L_2}^2+c\\
	&\leq\norm{P_{\overline{\ran{B}}}MP_{\ker{C}^\perp}-BXC}_{L_2}^2+c \\
	&= \norm{M-BXC}_{L_2}^2.
\end{align*}
By taking the infimum over $X\in\mathcal{X}_r$ we see that $\norm{M-B\hat{X}^{(n)}C}_{L_2}$ converges to $\inf\{\norm{M-BXC}_{L_2}:\ X\in\mathcal{X}_r\}$. We conclude that $(\hat{X}^{(n)})_n$ is a sequence of approximate minimisers, i.e. $\norm{M-B\hat{X}^{(n)}C}_{L_2}\rightarrow \inf\{\norm{M-BXC}_{L_2},\ X\in\mathcal{X}_r\}$, and each element in this sequence satisfies the minimality property $\hat{X}^{(n)}=P_{\ker{B}^\perp}\hat{X}^{(n)}P_{\overline{\ran{C}}}$ on $\dom{\hat{X}^{(n)}}=\dom{C}^\dagger.$ 

\subsection{Optimal approximation error}
\label{subsec:optimal_error}
The quality of the optimal approximation \eqref{eqn:hat_X} is addressed in the following result. The result also implies that the optimal approximation error converges to 0 as $r\rightarrow\infty$.
\begin{proposition}
	\label{prop:optimal_error}
	Assume \Cref{setting:main} and $\mathcal{Y}_{r}\not=\emptyset$. For any $\hat{X}$ given by \eqref{eqn:hat_X} it holds that
	\begin{align*}
		\norm{M-B\hat{X}C}_{L_2}^2 =  \norm{M}_{L_2}^2 - \Delta,
	\end{align*}
	with $\Delta\coloneqq \sum_{i\leq r}^{}\abs{\sigma_i(P_{\overline{\ran{B}}}MP_{\ker{C}^\perp})}^2$. We have
	\begin{align*}
		\Delta  =   \sum_{i\leq r}^{}\sigma_i(P_{\overline{\ran{B}}}MC^\dagger CM^*P_{\overline{\ran{B}}}) 
		= \sum_{i\leq r}^{}\sigma_i(C^\dagger CM^*P_{\overline{\ran{B}}}MC^\dagger C).
	\end{align*}
	If additionally $\ran{M}\subset\dom{B^\dagger}$, then 
	$B^\dagger MC^\dagger CM^*B$ has only nonnegative eigenvalues and
		\begin{align*}
			\Delta =  \sum_{i\leq r}^{}\lambda_i(B^\dagger MC^\dagger CM^*B), 
		\end{align*}
	where $\lambda_i(\cdot)$ denotes the $i$-th largest eigenvalue.
\end{proposition}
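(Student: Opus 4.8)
The plan is to start from the already-established optimal error formula and compute the squared singular values of $P_{\overline{\ran{B}}}MP_{\ker{C}^\perp}$. The paper states the optimal error is $\norm{M-B\hat X C}_{HS}^2 = \norm{M}_{HS}^2 - \Delta$ with $\Delta = \sum_{i\le r}\sigma_i(P_{\overline{\ran{B}}}MP_{\ker{C}^\perp})^2$. The first display (the error identity itself) should follow from \Cref{prop:eckart_young} applied to the equivalent problem \eqref{eqn:equivalent_problem}: the optimal rank-$r$ truncation error for $P_{\overline{\ran{B}}}MP_{\ker{C}^\perp}$ in Hilbert--Schmidt norm equals the tail sum of squared singular values, and combining with \Cref{lemma:projected_problem} (the constant $c$) recovers $\norm{M}_{HS}^2$ minus the leading $r$ squared singular values. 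So the genuine content is the two rewritings of $\Delta$.

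The key identity is that for any Hilbert--Schmidt operator $N$, $\sigma_i(N)^2 = \sigma_i(NN^*) = \sigma_i(N^*N)$, since the nonzero singular values of $N$ are the square roots of the nonzero eigenvalues of $NN^*$ (equivalently $N^*N$), and these coincide as multisets. First I would set $N \coloneqq P_{\overline{\ran{B}}}MP_{\ker{C}^\perp}$ and use $P_{\ker{C}^\perp} = C^\dagger C$ from \eqref{eqn:third_moore_penrose}. Then $NN^* = P_{\overline{\ran{B}}}M P_{\ker{C}^\perp} M^* P_{\overline{\ran{B}}} = P_{\overline{\ran{B}}}M C^\dagger C M^* P_{\overline{\ran{B}}}$, using that $P_{\ker{C}^\perp}$ is an orthogonal projector, so $P_{\ker{C}^\perp} = P_{\ker{C}^\perp}^* = P_{\ker{C}^\perp}^2 = C^\dagger C$. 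This gives $\sigma_i(N)^2 = \sigma_i(NN^*)$, which is the first rewriting. For the second, I would instead expand $N^*N = P_{\ker{C}^\perp}M^* P_{\overline{\ran{B}}} M P_{\ker{C}^\perp} = C^\dagger C M^* P_{\overline{\ran{B}}} M C^\dagger C$, again using idempotence of the projectors, so that $\sigma_i(N)^2 = \sigma_i(N^*N)$ yields the claimed expression.

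For the final claim, when $\ran{M}\subset\dom{B^\dagger}$ the paper already notes via \eqref{eqn:fourth_moore_penrose} that $P_{\overline{\ran{B}}}M = BB^\dagger M$. I would substitute this into $NN^*$. A clean route is to observe that the nonzero eigenvalues of $P_{\overline{\ran{B}}} M C^\dagger C M^* P_{\overline{\ran{B}}}$ coincide (with multiplicity) with those of a related operator obtained by a cyclic-type argument: since $P_{\overline{\ran{B}}} = BB^\dagger$ on $\ran{M}$ and $B^\dagger B B^\dagger = B^\dagger$, one shows the operators $B^\dagger M C^\dagger C M^* B$ and $P_{\overline{\ran{B}}} M C^\dagger C M^* P_{\overline{\ran{B}}}$ have the same nonzero spectrum. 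The standard tool is that for operators $A$ and $D$ (with composites Hilbert--Schmidt or at least trace-class where needed), $AD$ and $DA$ have identical nonzero eigenvalues with multiplicities; applying this with a suitable factorization, writing $T \coloneqq MC^\dagger C M^*$ so that one compares $B^\dagger T B$ against $BB^\dagger T BB^\dagger$, should produce the equality of the top-$r$ eigenvalue sums.

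The main obstacle I anticipate is the final eigenvalue identity: care is needed because $B^\dagger$ is generally unbounded and the cyclic spectral identity $\sigma(AD)\setminus\{0\} = \sigma(DA)\setminus\{0\}$ must be justified in the unbounded/infinite-dimensional setting rather than invoked naively. The safe approach is to avoid moving unbounded factors freely: instead restrict attention to the range of the relevant projectors, where $BB^\dagger$ acts as the identity, and verify that $B^\dagger$ applied to $\ran{M}\subset\ran{B}$ is well-defined and that the resulting compositions are self-adjoint positive trace-class operators whose top $r$ eigenvalues coincide. I would phrase the argument so that all operator products act on vectors in $\dom{B^\dagger}$, invoke $BB^\dagger = \id$ on $\ran{B}$ from \eqref{eqn:fourth_moore_penrose}, and conclude the eigenvalue agreement on the orthogonal complement of the joint kernel, where everything is bounded.
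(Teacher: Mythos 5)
Your proposal is correct and follows essentially the same route as the paper: the error identity via \Cref{lemma:projected_problem} together with the fact that $B\hat{X}C$ is a rank-$r$ truncated SVD of $P_{\overline{\ran{B}}}MP_{\ker{C}^\perp}$, the two rewritings of $\Delta$ via $NN^*$ and $N^*N$ with $P_{\ker{C}^\perp}=C^\dagger C$, and the final eigenvalue identity established at the level of individual eigenvectors (the paper maps $v\mapsto B^\dagger v$ and $w\mapsto Bw$, using that nonzero eigenvectors of $T$ lie in $\ran{B}$ where $BB^\dagger$ is the identity), which is exactly the ``safe approach'' you describe in place of a naive cyclic $AD$/$DA$ argument. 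The only slip is writing $\ran{M}\subset\ran{B}$ where the hypothesis is $\ran{M}\subset\dom{B^\dagger}$; this is harmless since the eigenvectors relevant to the argument do lie in $\ran{B}$.
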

\begin{proof}
	By \Cref{lemma:solution_characterisation} \cref{item:well-defined_and_reducing}, $B\hat{X}C$ is a rank-$r$ reduced SVD of $P_{\overline{\ran{B}}}MP_{\ker{C}^\perp}$.
	Thus, by \Cref{lemma:projected_problem},
	\begin{align*}
		\norm{M-B\hat{X}C}_{L_2}^2 &=  \norm{M}_{L_2}^2 + \norm{P_{\overline{\ran{B}}}MP_{\ker{C}^\perp}-B\hat{X}C}_{L_2}^2 -\norm{P_{\overline{\ran{B}}}MP_{\ker{C}^\perp}}_{L_2}^2 \\
		&= \norm{M}_{L_2}^2 + \sum_{i>r}^{}\abs{\sigma_i(P_{\overline{\ran{B}}}MP_{\ker{C}^\perp})}^2 - \sum_{i}^{}\abs{\sigma_i(P_{\overline{\ran{B}}}MP_{\ker{C}^\perp})}^2\\
		&= \norm{M}_{L_2}^2 - \Delta.
	\end{align*}
	By \Cref{prop:hs_calc_from_svd} and the fact that $P_{\overline{\ran{B}}}$ and $P_{\ker{C}^\perp}$ are orthogonal projectors,
	\begin{align*}
		\Delta= \sum_{i\leq r}^{}\sigma_i(P_{\overline{\ran{B}}}MP_{\ker{C}^\perp}(P_{\overline{\ran{B}}}MP_{\ker{C}^\perp})^*)  = \sum_{i\leq r}^{}\sigma_i(P_{\overline{\ran{B}}}MP_{\ker{C}^\perp}M^*P_{\overline{\ran{B}}}),
	\end{align*}
	and similarly
	\begin{align*}
		\Delta= \sum_{i\leq r}^{}\sigma_i((P_{\overline{\ran{B}}}MP_{\ker{C}^\perp})^*P_{\overline{\ran{B}}}MP_{\ker{C}^\perp})  = \sum_{i\leq r}^{}\sigma_i(P_{\ker{C}^\perp}M^*P_{\overline{\ran{B}}}MP_{\ker{C}^\perp}).
	\end{align*}
	Now we can use $P_{\ker{C}^\perp}=C^\dagger C$, c.f.\ \eqref{eqn:third_moore_penrose}. Finally, the assumption $\ran{M}\subset\dom{B^\dagger}$ and \eqref{eqn:fourth_moore_penrose} imply $P_{\overline{\ran{B}}}M = BB^\dagger M$. Hence, 
	\begin{align*}
		T\coloneqq P_{\overline{\ran{B}}}MP_{\ker{C}^\perp}M^*P_{\overline{\ran{B}}} = BB^\dagger M C^\dagger CM^*P_{\overline{\ran{B}}},
	\end{align*}
	and $T\in L_2(\mathcal{H}_2)$, $T=T^*$, $T\geq 0$. In particular, the singular values of $T$ equal the eigenvalues of $T$. To compute $\Delta$ under the assumption $\ran{M}\subset\dom{B}^\dagger$, we may thus use the eigenvalues of $T$.
	Suppose that there exists some nonzero $v$ such that $Tv=\lambda v$. Then $v\in\ran{B}\subset\dom{B^\dagger}$. With $w\coloneqq B^\dagger v$ it holds that $w\not=0$. By \eqref{eqn:second_moore_penrose} and \eqref{eqn:fourth_moore_penrose}, $B^\dagger MC^\dagger CM^*Bw= B^\dagger Tv =\lambda w$. Thus $\lambda$ is an eigenvalue of $B^\dagger MC^\dagger CM^*B$. Conversely, suppose there exists some nonzero $w$ such that $B^\dagger MC^\dagger CM^*Bw =\lambda w$. Then $w\not\in\ker{B}$. With $v\coloneqq Bw$ it holds that $v\not=0$. By \eqref{eqn:first_moore_penrose} and \eqref{eqn:fourth_moore_penrose}, $Tv=BB^\dagger MC^\dagger CM^* B w=B\lambda w=\lambda v$. Thus $\lambda$ is an eigenvalue of $T$. We conclude that
	\begin{align*}
		\Delta=\sum_{i\leq r}^{}\sigma_i(T)  = \sum_{i\leq r}^{}\lambda_i(T)= \sum_{i\leq r}^{}\lambda_i(B^\dagger MC^\dagger CM^*B).
	\end{align*}
\end{proof}

\section{Unbounded solution}
\label{sec:unbounded_solution}
We consider an example that shows that for a given approximation problem, all solutions $\hat{X}$ of \eqref{eqn:reduced_rank_approximation_problem} of the form \eqref{eqn:hat_X}---and in fact all solutions of \eqref{eqn:reduced_rank_approximation_problem}---are unbounded. This example also shows it is possible that some solutions of the form \eqref{eqn:hat_X} are bounded, while others are unbounded.

Let $\mathcal{H}$ be a separable Hilbert space and take $\mathcal{H}_i\coloneqq\mathcal{H}$ for $1\leq i\leq 4$. 
Let $(e_n)_n$ be an orthonormal basis of $\mathcal{H}$. Let $(\gamma_n)_n,(\mu_n)_n\in\C\backslash\{0\}$ converge to zero. We assume $(\mu_n)_n$ is nonincreasing. Let $(\alpha_n)_n\in\C$ be such that $\abs{\alpha_n}\rightarrow\infty$ and $(\alpha_n \gamma_n)_n$ is square summable. For example, $\gamma_n=n^{-2}$ and $\alpha_n=n$ satisfy these conditions.

Define $w\coloneqq \sum_{n\geq 2}^{}\alpha_n\gamma_n e_n\in\mathcal{H}$. Put $f_1\coloneqq w\norm{w}^{-1}$ and $f_2\coloneqq e_1$ and extend $\{f_1,f_2\}$ to an ONB $(f_n)_n$ of $\mathcal{H}$. We then set, for $h\in\mathcal{H}$,
\begin{align*}
	Mh = \sum_{n}^{}\mu_n\langle h,f_n\rangle f_n,\\
	Ch = \sum_{n}^{}\gamma_n\langle h,e_n\rangle e_n.
\end{align*}
Then $C$ is injective and $\ran{C}=\{h\in\mathcal{H}:\ \sum_{n}^{}\gamma_n^2\langle h,e_n\rangle^2<\infty\}$. As $\ran{C}$ contains $\{e_n\}_n$, it is dense in $\mathcal{H}$, thus $\ran{C}^\perp=\{0\}$. Therefore, $\dom{C^\dagger}=\ran{C}$ and the Moore--Penrose inverse of $C$ is given, as in \Cref{ex:moore_penrose_inverse}, by
\begin{align*}
	C^\dagger h = \sum_{n}^{}\gamma_n^{-1}\langle h,e_n\rangle e_n,\quad h\in\dom{C^\dagger}.
\end{align*}
Let us take $B=I$ for simplicity, so $P_{\overline{\ran{B}}}=I$. As $C$ is injective, $P_{\overline{\ran{B}}}MP_{\ker{C}^\perp}=M$. Since $(\mu_n)_n$ is nonincreasing, for any $r\geq 1$, one of the rank-$r$ truncated SVDs of $P_{\overline{\ran{B}}}MP_{\ker{C}^\perp}$ is given by
\begin{align}
	(P_{\overline{\ran{B}}}MP_{\ker{C}^\perp})_rh=\sum_{n=1}^{r}\mu_n \langle h,f_n\rangle f_n.
	\label{eqn:example_svd}
\end{align}
With this SVD for given $r\geq 1$, we now compute for $h\in\dom{C}^\dagger$,
\begin{align*}
	B^\dagger (P_{\overline{\ran{B}}}MP_{\ker{C}^\perp})_rC^\dagger h = \sum_{n=1}^{r}\sum_{m=1}^{\infty}\mu_n\gamma_m^{-1}\langle h,e_m\rangle\langle e_m,f_n\rangle f_n.
\end{align*}
As $f_1=w\norm{w}^{-1}$, it follows for $h=e_m\in\dom{C^\dagger}$, $m\geq 2$,
\begin{align*}
	\norm{B^\dagger (P_{\overline{\ran{B}}}MP_{\ker{C}^\perp})_rC^\dagger e_m}^2 &= \sum_{n=1}^{r}\Abs{\mu_n\gamma^{-1}_m\langle e_m,f_n\rangle}^2\\
	&\geq \Abs{\mu_1\gamma^{-1}_m\langle e_m,f_1\rangle}^2 = \abs{\mu_1}^2\abs{\alpha_m}^2\norm{w}^{-2}\rightarrow\infty
\end{align*}
as $m\rightarrow \infty$, showing that $B^\dagger (P_{\overline{\ran{B}}}MP_{\ker{C}^\perp})_rC^\dagger$ is unbounded. This shows that \eqref{eqn:hat_X} can indeed give unbounded solutions. If $\mu_1>\mu_2$ and $r=1$, then \eqref{eqn:hat_X} gives a unique solution, which shows that \eqref{eqn:hat_X} can also give only unbounded solutions. In this case, $\mathcal{S}_{r,*}$ only contains unbounded solutions, so that in fact all solutions of \eqref{eqn:reduced_rank_approximation_problem} are unbounded by \Cref{cor:all_solutions_unbounded_when_minimal_solutions_unbounded}.

Now let $r=1$ and $\mu_1=\mu_2$, so that $P_{\overline{\ran{B}}}MP_{\ker{C}^\perp}$ has at least two distinct rank-$1$ truncated SVDs. 
The rank-$1$ truncated SVD \eqref{eqn:example_svd}, i.e. $(P_{\overline{\ran{B}}}MP_{\ker{C}^\perp})_1 = \mu_1\langle \cdot,f_1\rangle f_1$,  yields the following solution, which is of the form \eqref{eqn:hat_X}:
\begin{align*}
	\hat{X}_a h\coloneqq\mu_1\langle C^\dagger h,f_1\rangle f_1 = \mu_1\sum_{n}^{}\gamma_n^{-1}\langle h,e_n\rangle \langle e_n,f_1\rangle f_1 = \mu_1\norm{w}^{-1}\sum_{n\geq 2}^{}\alpha_n\langle h,e_n\rangle f_1,
\end{align*}
for $h\in\dom{C^\dagger}$. By again taking $h=e_m$, $\hat{X}_a$ is seen to be unbounded.
As $f_2=e_1$, the rank-$1$ truncated SVD $\mu_2\langle \cdot,e_1\rangle e_1$ yields the solution 
\begin{align*}
	\hat{X}_b h\coloneqq \mu_2\langle C^\dagger h,e_1\rangle e_1 = \mu_2\gamma_1^{-1}\langle h,e_1\rangle e_1,\quad h\in\dom{C^\dagger},
\end{align*}
which is bounded. 

We have thus constructed two solutions of the form \eqref{eqn:hat_X}: one of these is bounded, in particular it has finite rank and has finite Hilbert--Schmidt norm; the other is unbounded.

\section{Bounded solutions and approximations}
\label{sec:bounded_solutions}
\Cref{thm:main} prescribes a set of solutions that are of the form \eqref{eqn:hat_X}. As shown in \Cref{sec:unbounded_solution}, this set can contain unbounded solutions. It is even possible that some solutions of the form \eqref{eqn:hat_X} are bounded and others are unbounded. It is natural to ask when all solutions are bounded. In \Cref{subsec:boundedness_conditions} we give necessary and sufficient conditions for \eqref{eqn:hat_X} to have only bounded solutions. In the case where they are not bounded, we use the concept of outer inverses to construct suitable bounded approximations in \Cref{subsec:bounded:approximations}.  An adjoint formulation of \eqref{eqn:reduced_rank_approximation_problem}-\eqref{eqn:minimality_constraint} is discussed in \Cref{rmk:adjoint_formulation}.

\subsection{Necessary and sufficient conditions for bounded solutions}
\label{subsec:boundedness_conditions}
We recall that \Cref{rmk:closed_range_case}\ref{item:C_closed_range} gives a sufficient condition for \eqref{eqn:hat_X} to consist of only bounded solutions, namely that $\ran{C}$ is closed in $\mathcal{H}_2$. An even stronger condition is given next.

\begin{lemma}
	Let $C\in\B(\mathcal{H}_1,\mathcal{H}_2)$. If $\ker{C}^\perp$ is finite-dimensional, then $\ran{C}$ is closed.
	\label{lemma:ker_perp_inf_dim}
\end{lemma}

\begin{proof}
	Since $\overline{\ran{C^*}}=\ker{C}^\perp$ by \Cref{prop:ker_ran_relation}, it follows that $\ran{C^*}$ is finite-dimensional when $\ker{C}^\perp$ is finite-dimensional. But finite-dimensional spaces are closed. By \Cref{prop:closed_range_thm}, then also $\ran{C}$ is closed.
\end{proof}

It follows that all $\hat{X}$ in \eqref{eqn:hat_X} are bounded when $\mathcal{H}_1$ is finite-dimensional. We now discuss a range condition on $C$ that is not just sufficient, but also necessary for $\hat{X}$ to be bounded.

\begin{proposition}
	\label{prop:bounded_solutions}
	In \Cref{setting:main} suppose $\mathcal{Y}_{r}\not=\emptyset$ and $(P_{\overline{\ran{B}}}MP_{\ker{C}^\perp})_r\in\mathcal{Y}_{r}$. Let $\hat{X}$ be given by \eqref{eqn:hat_X} and define $Z\coloneqq B^\dagger(P_{\overline{\ran{B}}}MP_{\ker{C}^\perp})_r$, so $\hat{X}=ZC^\dagger$. The following are equivalent:
	\begin{enumerate}
		\item $\hat{X}$ is bounded.
			\label{item:bounded}
		\item $\ker{\hat{X}}$ is closed in $\dom{C^\dagger}$.
			\label{item:closed_kernel}
		\item $C(\ker{Z}\cap\ker{C}^\perp)$ is closed in $\ran{C}$,
			\label{item:closed_range}
		\item $\norm{Zh}\leq \eta\norm{Ch}$ for all $h\in\mathcal{H}_1$ and some $\eta>0$,
			\label{item:upper_bound}
		\item
			$\ran{(P_{\overline{\ran{C^*}}}M^*P_{{\ker{B^*}^\perp}})_r}\subset\dom{(C^*)^\dagger}$.
			\label{item:adjoint}
	\end{enumerate}
\end{proposition}
\begin{proof}
	The equivalence of \cref{item:bounded,item:closed_kernel} is proven similarly as one proves the equivalence of continuity and having closed range for linear functionals, i.e. when $r=1$. 
	We outline a proof in the appendix as \Cref{lemma:closed_kernel_and_finite_dim_range}.
	To show the equivalence between \cref{item:closed_kernel,item:closed_range} we compute $\ker{\hat{X}}$.  As $\ran{C^\dagger}=\ker{C}^\perp$, c.f.\ \Cref{prop:moore_penrose} \cref{item:mp_inv_range}, we have
	\begin{align*}
		\ker{\hat{X}} &= \{k\in\dom{C^\dagger}:\ C^\dagger k\in\ker{Z}\}\\
		&= \{k\in\dom{C^\dagger}:\ C^\dagger k\in\ker{Z}\cap\ker{C}^\perp\}\\
		&= \{k_0+k_\perp\in\ran{C}\oplus\ran{C}^\perp:\ C^\dagger k_0\in\ker{Z}\cap\ker{C}^\perp\}.
	\end{align*}
	If $k_0\in\ran{C}$ is such that $C^\dagger k_0\in\ker{Z}\cap\ker{C}^\perp$, then $k_0=P_{\overline{\ran{C}}}k_0=CC^\dagger k_0\in C(\ker{Z}\cap\ker{C}^\perp)$ by \eqref{eqn:fourth_moore_penrose}. Conversely, if $k_0=Cz$ for some $z\in\ker{Z}\cap{\ker{C}}^\perp$, then $z=P_{\ker{C}^\perp}z = C^\dagger C z = C^\dagger k_0$ by \eqref{eqn:third_moore_penrose}. Thus,
	\begin{align*}
		\ker{\hat{X}}= C(\ker{Z}\cap\ker{C}^\perp)\oplus\ran{C}^\perp =  U\oplus \ran{C}^\perp,
	\end{align*}
	with $U\coloneqq C(\ker{C}\cap\ker{C}^\perp)$.
	If $U\oplus \ran{C}^\perp\subset \ran{C}\oplus \ran{C}^\perp$ is closed, then $U\subset\ran{C}$ is closed. Since $\ran{C}^\perp$ is closed, the converse holds as well. This shows that \cref{item:closed_kernel,item:closed_range} are equivalent. To show that \cref{item:bounded,item:upper_bound} are equivalent, we first note that $\ker{C}\subset\ker{B^\dagger(P_{\overline{\ran{B}}}MP_{\ker{C}^\perp})_r}$. Thus, $\hat{X}=ZC^\dagger$ implies $\hat{X}C = ZC^\dagger C = ZP_{\ker{C}^\perp}=ZP_{\ker{Z}^\perp}=Z$ by \eqref{eqn:third_moore_penrose}. We see that \cref{item:bounded} implies \cref{item:upper_bound} with $\eta\coloneqq\norm{\hat{X}}$. Conversely, if \cref{item:upper_bound} holds, then by \eqref{eqn:fourth_moore_penrose} we have $\norm{\hat{X}h}=\norm{ZC^\dagger h}\leq \eta\norm{CC^\dagger h}=\eta\norm{P_{\ker{C}^\perp}h}\leq \eta\norm{h}$ for $h\in\dom{C^\dagger}\subset\dom{\hat{X}}$. The fact $\dom{C^\dagger}\subset\dom{\hat{X}}$ follows by definition of $\mathcal{X}_r$ in \Cref{setting:main}. Since $\dom{C^\dagger}$ is dense in $\mathcal{H}_2$, we conclude that $\hat{X}$ is bounded on $\mathcal{H}_2$, showing \cref{item:bounded} holds. Finally, we show that \cref{item:adjoint} implies \cref{item:upper_bound} and that \cref{item:bounded} implies \cref{item:adjoint}. Suppose \cref{item:adjoint} holds. Let $(P_{\overline{\ran{C^*}}}M^*P_{\ker{B^*}^\perp})_r=\sum_{i=1}^{r}\alpha_i f_i\otimes e_i$ be a truncated rank-$r$ SVD, with $(\alpha_i)_i$ a nonnegative sequence and $(f_i)_i\subset\mathcal{H}_1$ and $(e_i)_i\subset\mathcal{H}_4$ orthonormal. Thus, we have $(P_{\overline{\ran{B}}}MP_{\ker{C}^\perp})_r=\sum_{i=1}^{r}\alpha_i e_i\otimes f_i$ by \Cref{prop:ker_ran_relation}. By \Cref{rmk:equivalent_range_condition}, \cref{item:adjoint} implies $\ran{(P_{\overline{\ran{C^*}}}M^*P_{\ker{B^*}^\perp})_r}\subset\ran{C^*}$, hence $f_i\in\ran{C^*}$ for each $i$. That is, $f_i=C^*h_i$ for some $(h_i)_i\subset\mathcal{H}_2$. Now, for $h\in\mathcal{H}_1$,
	\begin{align*}
		\norm{Zh}^2&= \norm{B(P_{\overline{\ran{B}}}MP_{\ker{C}^\perp})_rh}^2\leq\norm{B}\sum_{i=1}^{r}\alpha_i^2\langle h,f_i\rangle^2 =\norm{B}\sum_{i=1}^{r}\alpha_i^2\langle h,C^*h_i\rangle^2\\
		&=  \norm{B}\sum_{i=1}^{r}\alpha_i^2\langle Ch,h_i\rangle^2\leq \norm{B}\sum_{i=1}^{r}\alpha_i^2\norm{Ch}^2\norm{h_i}^2 = \left(\norm{B}\sum_{i=1}^{r}\alpha_i^2\norm{h_i}^2\right)\norm{Ch}^2,
	\end{align*}
	where we use consecutively the definition of $Z$, the definition of the operator norm and the orthonormality of $(e_i)_i$, the fact that $f_i=C^*h_i$, the definition of the adjoint, the Cauchy--Schwarz inequality, and rearranging. Thus, \cref{item:upper_bound} holds. If \cref{item:bounded} holds, then $(B\hat{X}C)^* = C^*\hat{X}^*B^*$. By \eqref{eqn:third_moore_penrose} and \eqref{eqn:fourth_moore_penrose}, $B\hat{X}C = BB^\dagger(P_{\overline{\ran{B}}}MP_{\ker{C}^\perp})_rC^\dagger C=(P_{\overline{\ran{B}}}MP_{\ker{C}^\perp})_r$. By \Cref{prop:ker_ran_relation},
	\begin{align*}
		\ran{(P_{\overline{\ran{C^*}}}M^*P_{{\ker{B^*}^\perp}})_r} = \ran{(P_{\overline{\ran{B}}}MP_{\ker{C}^\perp})_r^*}  = \ran{(B\hat{X}C)^*} =\ran{C^*\hat{X}^*B^*}\subset \ran{C^*},
	\end{align*}
	We conclude \cref{item:adjoint} holds.
\end{proof}

We give a final sufficient condition, namely a lower bound condition, that is weaker than $C$ having closed range. Since $C$ is bounded, $\ker{C}$ is closed. We recall that the quotient space $\mathcal{H}_1/\ker{C}$ then is a Banach space with norm $\norm{h+\ker{C}}=\inf\{\norm{h+c}:\ c\in\ker{C}\}$, and that the quotient map $Q\in\B(\mathcal{H}_1,\mathcal{H}_1/\ker{C})$, $h\mapsto h+\ker{C}$ is continuous. See for example \cite[Section III.4]{conway_course_2007}.
\begin{corollary}
	\label{cor:lower_bound_condition}
	Suppose that there exists $c>0$ such that $\norm{Ch}\geq c\norm{h+\ker{C}}$ for all $h\in\ker{Z}\cap\ker{C}^\perp$. Then $\hat{X}$ is bounded.
\end{corollary}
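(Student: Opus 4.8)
The plan is to reduce everything to the equivalence \ref{item:bounded}~$\Leftrightarrow$~\ref{item:closed_range} in \Cref{prop:bounded_solutions}. By that proposition, $\hat{X}$ is bounded if and only if $U\coloneqq C(\ker{Z}\cap\ker{C}^\perp)$ is closed in $\ran{C}$, so it suffices to show that the lower bound hypothesis forces $U$ to be closed. Writing $N\coloneqq\ker{Z}\cap\ker{C}^\perp$, I would first record that $N$ is a closed subspace of $\mathcal{H}_1$: the complement $\ker{C}^\perp$ is always closed, and $\ker{Z}$ is closed because $Z=B^\dagger(P_{\overline{\ran{B}}}MP_{\ker{C}^\perp})_r$ is bounded (it is $B^\dagger$ composed with the truncated SVD $(P_{\overline{\ran{B}}}MP_{\ker{C}^\perp})_r\in\mathcal{Y}_r$, which is bounded by \Cref{lemma:unbounded_after_finite_rank}).

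Next I would take an arbitrary sequence $u_n=Ch_n\in U$ with $h_n\in N$ that converges in $\mathcal{H}_2$ to some $u$, and show $u\in U$. The key observation is that the quotient norm collapses to the Hilbert space norm on $\ker{C}^\perp$: since $\mathcal{H}_1=\ker{C}^\perp\oplus\ker{C}$ orthogonally, for $h\in\ker{C}^\perp$ and $k\in\ker{C}$ we have $\norm{h+k}^2=\norm{h}^2+\norm{k}^2\geq\norm{h}^2$, with the infimum attained at $k=0$, so $\norm{h+\ker{C}}=\norm{h}$. Because $N\subset\ker{C}^\perp$ is a subspace, each difference $h_n-h_m$ lies in $N\subset\ker{C}^\perp$, and the hypothesis gives
\[
	\norm{h_n-h_m}=\norm{(h_n-h_m)+\ker{C}}\leq c^{-1}\norm{C(h_n-h_m)}=c^{-1}\norm{u_n-u_m}.
\]
As $(u_n)$ converges it is Cauchy, whence $(h_n)$ is Cauchy in $\mathcal{H}_1$ and converges to some $h$; closedness of $N$ then yields $h\in N$.

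Finally, continuity of $C$ gives $u=\lim_n Ch_n=Ch\in C(N)=U$, so $U$ is closed in $\mathcal{H}_2$, hence a fortiori closed in $\ran{C}$, and \Cref{prop:bounded_solutions} concludes that $\hat{X}$ is bounded. I expect the only genuine subtlety to be the reduction of the quotient norm to $\norm{h}$ on $\ker{C}^\perp$ together with confirming that $N$ is closed (which rests on the boundedness of $Z$); once these are in place, the lower-bound hypothesis is precisely the mechanism that converts norm control of $Ch_n$ into norm control of $h_n$, upgrading convergence of $(u_n)$ to convergence of $(h_n)$ and thereby closing $U$.
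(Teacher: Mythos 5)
Your proof is correct, and it follows the same overall strategy as the paper --- reduce via \Cref{prop:bounded_solutions} to showing that $C(\ker{Z}\cap\ker{C}^\perp)$ is closed, then run a Cauchy-sequence argument powered by the lower bound --- but the execution is genuinely different and, in fact, more elementary. The paper factors $C=\tilde{C}\circ Q$ through the quotient $\mathcal{H}_1/\ker{C}$, shows $(w_n+\ker{C})_n$ is Cauchy there, invokes completeness of the quotient, and then needs the closedness of the orthogonal direct sum $W+\ker{C}$ (cited from Ben-Israel and Greville) to pull the limit back to an element of $W$. Your observation that the quotient norm collapses to the Hilbert norm on $\ker{C}^\perp$ --- since $\norm{h+k}^2=\norm{h}^2+\norm{k}^2$ for $h\in\ker{C}^\perp$, $k\in\ker{C}$ --- turns the hypothesis into a plain coercivity estimate $\norm{Ch}\geq c\norm{h}$ on $N\coloneqq\ker{Z}\cap\ker{C}^\perp$, so the Cauchy argument runs directly in $\mathcal{H}_1$ and you only need closedness of $N$ itself (which you correctly justify from the boundedness of $Z$, via \Cref{lemma:unbounded_after_finite_rank}; the paper needs the same fact implicitly for the closedness of $W+\ker{C}$). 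What the quotient-space route buys is that it generalises verbatim to settings where the complement of $\ker{C}$ is not orthogonal (e.g.\ Banach spaces); in the Hilbert space setting of the paper your shortcut is available and cleaner, dispensing with $\tilde{C}$, the completeness of the quotient, and the external citation.
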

\begin{proof}
	Let us denote $W\coloneqq \ker{Z}\cap\ker{C}^\perp$. We show that $C(W)$ is closed in $\mathcal{H}_2$. Then $C(W)$ is also closed in $\ran{C}$ and we may apply \Cref{prop:bounded_solutions}. Now, we can decompose $C=\tilde{C}\circ Q$ for some $\tilde{C}\in\B(\mathcal{H}_1/\ker{C},\mathcal{H}_2)$. Let $(w_n)_n$ be a sequence in $W$ such that $(Cw_n)_n=(\tilde{C}(w_n+\ker{C}))_n$ converges to $k\in\mathcal{H}_4$. We need to show $k=Cw$ for some $w\in W$. By assumption, $\norm{w_n-w_m+\ker{C}}\leq c^{-1}\norm{C(w_n-w_m)}$, so $(w_n+\ker{C})_n$ is a Cauchy sequence in $\mathcal{H}_1/\ker{C}$. 
	But $\mathcal{H}_1/\ker{C}$ is complete, so $w_n+\ker{C}$ converges to some $h+\ker{C}\in\mathcal{H}_1/\ker{C}$. As $W$ and $\ker{C}$ are orthogonal, their direct sum $W+\ker{C}$ is closed, c.f.\ \cite[Section 9.2.(A)]{ben-israel_generalized_2003}. Hence $h+\ker{C}\in W+\ker{C}$, i.e. $h+\ker{C}=w+\ker{C}$ for some $w\in W$. Now, $Cw=Ch=\tilde{C}(h+\ker{C})=\lim_n \tilde{C}(w_n+\ker{C})=k$, by continuity of $\tilde{C}$.
\end{proof}

In case $C$ has closed range, the operator $\tilde{C}$ in the proof of \Cref{cor:lower_bound_condition} also has closed range. But $\tilde{C}$ is also injective, so it has a bounded inverse on its range, by the inverse mapping theorem, c.f.\ \cite[Theorem III.12.5]{conway_course_2007}. Hence 
\begin{align}
	\label{eqn:lower_bound_for_closed_range_operator}
	\norm{h+\ker{C}}=\norm{\tilde{C}^{-1}\tilde{C}h}\leq \norm{\tilde{C}^{-1}}\norm{Ch},\quad\text{for all } h\in\mathcal{H}_1.
\end{align}
In particular, we conclude that the lower bound condition of \Cref{cor:lower_bound_condition} is satisfied with $c\coloneqq \norm{\tilde{C}^{-1}}^{-1}$ in the case where $C$ has closed range. 
The lower bound condition in \Cref{cor:lower_bound_condition} need not hold everywhere on $\mathcal{H}_1$, so it is weaker than \eqref{eqn:lower_bound_for_closed_range_operator}. Hence, the condition in \Cref{cor:lower_bound_condition} is strictly weaker than the condition that $\ran{C}$ is closed.

\subsection{Bounded approximations}
\label{subsec:bounded:approximations}
Even when the equivalent conditions of \Cref{prop:bounded_solutions} do not hold, we can enforce bounded solutions by instead considering the problem
\begin{align}
	\inf\{\norm{M-BXC}_{L_2}:\ X\in\B_{00,r}(\mathcal{H}_2,\mathcal{H}_3)\},
	\label{eqn:bounded_problem}
\end{align}
with the minimality property
\begin{align}
	P_{\ker{B}^\perp}XP_{\overline{\ran{C}}}=X.
	\label{eqn:bounded_minimality}
\end{align}
As $\mathcal{B}_{00,r}(\mathcal{H}_2,\mathcal{H}_3)\subset \mathcal{X}_r$, this infimum is bounded from below by the minimum in \eqref{eqn:reduced_rank_approximation_problem}. In the case that $\ran{C}$ is closed, the problems \eqref{eqn:reduced_rank_approximation_problem}-\eqref{eqn:minimality_constraint} and \eqref{eqn:bounded_problem}-\eqref{eqn:bounded_minimality} coincide by \Cref{rmk:closed_range_case}\ref{item:C_closed_range}. Let us therefore assume that $\ran{C}$ is not closed, so $\ker{C}^\perp$ is infinite-dimensional by \Cref{lemma:ker_perp_inf_dim}. We now show that in this case the infimum \eqref{eqn:bounded_problem} is not attained in general, but is still equal to the minimum in \eqref{eqn:reduced_rank_approximation_problem}. To do so, we construct a minimizing sequence $\hat{X}_n\in\mathcal{B}_{00,r}(\mathcal{H}_2,\mathcal{H}_3)$ to \eqref{eqn:bounded_problem}-\eqref{eqn:bounded_minimality} such that $\hat{X}_n\rightarrow\hat{X}$ pointwise on $\dom{\hat{X}}$. We do this under the assumption that $\mathcal{Y}_{r}\not=\emptyset$ and hence that a solution $\hat{X}$ of the form \eqref{eqn:hat_X} exists.

Given $\hat{X}$ of the form \eqref{eqn:hat_X}, we construct the approximating sequence $\hat{X}_n\in\mathcal{B}_{00,r}(\mathcal{H}_2,\mathcal{H}_3)$ based on the approximation of the unbounded Moore--Penrose inverse $C^\dagger$ by outer inverses as proposed by \cite{huang_approximation_2006}. We briefly repeat the construction and its properties here, but refer to \cite{huang_approximation_2006} for elaboration, see also \cite[Theorem 3.24]{engl_regularization_1996}.

Let $(Y_n)_n$ be an increasing sequence of finite-dimensional subspaces of $\overline{\ran{C}}$ such that $\overline{\cup_n Y_n}=\overline{\ran{C}}$. We define $X_n\coloneqq C^* Y_n$ and let $P_n$, $Q_n$ be the orthogonal projectors onto $Y_n$ and $X_n$ respectively. We then set $C_n\coloneqq P_nC$, so $C_n$ has finite rank. It can be shown, c.f.\ \cite[p114]{huang_approximation_2006}, that $(X_n)_n$ is an increasing sequence of finite-dimensional subspaces, that $\overline{\cup_n X_n}=\ker{C}^\perp$ and that $\ker{C_n}^\perp=X_n$, $\ran{C_n}=Y_n$ so that $\restr{C_n}{X_n}:X_n\rightarrow Y_n$ is a boundedly invertible operator. Hence, we can define the finite-rank operator $C_n^\#:\mathcal{H}_2\rightarrow\mathcal{H}_1$ as
\begin{align*}
	C_n^\#(y)\coloneqq
	\begin{cases}
		(\restr{C_n}{X_n})^{-1}(y),\quad y\in Y_n, \\
		0, \quad y\in Y_n^\perp.
	\end{cases}
\end{align*}
The following result states the approximation property of $C^\dagger$ by $C_n^\#$, see \cite[Theorem 2.1]{huang_approximation_2006} for a proof.
\begin{theorem}
	\label{thm:outer_inverse_approximation}
	The sequence $(C_n)_n\subset\B_{00}(\mathcal{H}_2,\mathcal{H}_1)$ satisfies the following:
	\begin{enumerate}
		\item
			$\dom{C^\dagger} = \{h\in\mathcal{H}_2:\lim_{n}C_n^{\#}h\text{ exists}\},$
		\item 
			\label{item:outer_inverse_approximation_2}
			$C^\dagger h = \lim_n C_n^\#h$ for $h\in\dom{C^\dagger}$,
		\item 
			$Q_nC^\dagger h = C_n^\#h$ for $h\in\dom{C^\dagger}$,
		\item
			$C_n^\#CC_n^\#=C_n^\#$, i.e. $C_n^\#$ is an outer inverse of $C.$
	\end{enumerate}
\end{theorem}
Under the condition $\mathcal{Y}_r\not=\emptyset$, let us now set 
\begin{align}
	\hat{X}_n\coloneqq B^\dagger(P_{\overline{\ran{B}}}MP_{\ker{C}^\perp})_rC_n^\#.
	\label{eqn:bounded_approximation}
\end{align}
The next result directly implies that the infimum in \eqref{eqn:bounded_problem} and minimum in \eqref{eqn:reduced_rank_approximation_problem} indeed agree.
\begin{proposition}
	\label{prop:solution_to_bounded_problem}
	In \Cref{setting:main} suppose $\mathcal{Y}_{r}\not=\emptyset$ and let $\hat{X}$ be given by \eqref{eqn:hat_X}. With $\hat{X}_n$ defined in \eqref{eqn:bounded_approximation}, it holds that $\hat{X}_n\in\B_{00,r}(\mathcal{H}_2,\mathcal{H}_3)$, $\hat{X}_n\rightarrow\hat{X}$ pointwise on $\dom{\hat{X}}$ and $B\hat{X}_nC\rightarrow B\hat{X}C$ in $L_2(\mathcal{H}_1,\mathcal{H}_4)$. Furthermore, $\hat{X}_n$ satisfies \eqref{eqn:bounded_minimality}. It follows that the infimum in \eqref{eqn:bounded_problem} and minimum in \eqref{eqn:reduced_rank_approximation_problem} agree.
\end{proposition}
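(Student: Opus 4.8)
The plan is to write $\hat{X}_n=ZC_n^\#$, where $Z\coloneqq B^\dagger(P_{\overline{\ran{B}}}MP_{\ker{C}^\perp})_r$ as in \Cref{prop:bounded_solutions}, and to exploit the approximation properties of $C_n^\#$ collected in \Cref{thm:outer_inverse_approximation}. Recall from the proof of \Cref{lemma:solution_characterisation} \cref{item:surjectivity} that $Z=B^\dagger Y$ is \emph{bounded} with $\dim\ran{Z}\leq r$, where $Y\coloneqq(P_{\overline{\ran{B}}}MP_{\ker{C}^\perp})_r$. Since each $C_n^\#$ has finite rank (its range lies in the finite-dimensional space $X_n$), the composition $\hat{X}_n=ZC_n^\#$ is bounded with $\dim\ran{\hat{X}_n}\leq\dim\ran{Z}\leq r$, giving $\hat{X}_n\in\B_{00,r}(\mathcal{H}_2,\mathcal{H}_3)$. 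Because $Z$ is everywhere defined and bounded, $\dom{\hat{X}}=\dom{C^\dagger}$; for $h\in\dom{C^\dagger}$ we have $C_n^\#h\rightarrow C^\dagger h$ by \Cref{thm:outer_inverse_approximation}, so continuity of $Z$ yields $\hat{X}_nh=ZC_n^\#h\rightarrow ZC^\dagger h=\hat{X}h$, which is the claimed pointwise convergence.

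Next I would verify the minimality property \eqref{eqn:bounded_minimality} by treating the two factors separately. On the left, $P_{\ker{B}^\perp}B^\dagger=B^\dagger BB^\dagger=B^\dagger$ by \eqref{eqn:third_moore_penrose} and \eqref{eqn:second_moore_penrose}, hence $P_{\ker{B}^\perp}\hat{X}_n=\hat{X}_n$. On the right, since $Y_n\subset\overline{\ran{C}}$ we have $\ran{C}^\perp\subset Y_n^\perp$, and $C_n^\#$ vanishes on $Y_n^\perp$ by construction; therefore $C_n^\#$ vanishes on $\ran{C}^\perp$, i.e. $C_n^\#=C_n^\#P_{\overline{\ran{C}}}$, whence $\hat{X}_n=\hat{X}_nP_{\overline{\ran{C}}}$. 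Combining the two identities gives $P_{\ker{B}^\perp}\hat{X}_nP_{\overline{\ran{C}}}=\hat{X}_n$.

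The main work is the Hilbert--Schmidt convergence $B\hat{X}_nC\rightarrow B\hat{X}C$, and the key idea is to realise both sides as finite-rank operators factoring through $Y$. Since $\ran{Y}\subset\ran{B}$, we have $BB^\dagger Y=Y$ by \eqref{eqn:fourth_moore_penrose}, so that $B\hat{X}C=Y$ (as already computed in the proof of \Cref{lemma:solution_characterisation}) and $B\hat{X}_nC=BB^\dagger YC_n^\#C=YC_n^\#C$. I would then establish the identity $C_n^\#C=Q_n$: applying \Cref{thm:outer_inverse_approximation} to $Ch\in\ran{C}\subset\dom{C^\dagger}$ gives $C_n^\#C=Q_nC^\dagger C=Q_nP_{\ker{C}^\perp}$ by \eqref{eqn:third_moore_penrose}, and $Q_nP_{\ker{C}^\perp}=Q_n$ because $X_n\subset\ker{C}^\perp$ (indeed $X_n=C^*Y_n\subset\overline{\ran{C^*}}=\ker{C}^\perp$ by \Cref{prop:ker_ran_relation}). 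Thus $B\hat{X}_nC=YQ_n$, and it remains to prove $YQ_n\rightarrow Y$ in $\norm{\cdot}_{HS}$.

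This last reduction is where the finite rank of $Y$ is indispensable, since strong operator convergence does not upgrade to Hilbert--Schmidt convergence in general. The $Q_n$ are orthogonal projectors onto the increasing subspaces $X_n$ with $\overline{\cup_n X_n}=\ker{C}^\perp$, so $Q_n\rightarrow P_{\ker{C}^\perp}$ strongly. Writing a singular value decomposition $Y=\sum_{i=1}^{\dim\ran{Y}}\sigma_i\,u_i\otimes v_i$ (where $u\otimes v$ denotes $h\mapsto\langle h,v\rangle u$), the right singular vectors satisfy $v_i\in\ker{Y}^\perp\subset\ker{C}^\perp$, because $\ker{Y}\supset\ker{C}$. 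Using self-adjointness of $Q_n$ and orthonormality of $(u_i)$, I would compute $\norm{Y-YQ_n}_{HS}^2=\sum_i\sigma_i^2\norm{v_i-Q_nv_i}^2$, a finite sum each of whose terms tends to $0$ since $Q_nv_i\rightarrow P_{\ker{C}^\perp}v_i=v_i$. Hence $YQ_n\rightarrow Y$ in Hilbert--Schmidt norm, completing the argument. The main obstacle is precisely this final step: identifying $B\hat{X}_nC=YQ_n$ and then upgrading the strong convergence $Q_n\rightarrow P_{\ker{C}^\perp}$ to Hilbert--Schmidt convergence, both of which hinge on the finite rank of $Y$ and on $X_n\subset\ker{C}^\perp$.
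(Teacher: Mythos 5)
Your proposal is correct and follows essentially the same route as the paper: the same factorisation $\hat{X}_n=ZC_n^\#$, the same appeal to \Cref{thm:outer_inverse_approximation}, and the same key identity $C_n^\#C=Q_n$ reducing everything to $YQ_n\rightarrow Y$ in $\norm{\cdot}_{HS}$. The only (cosmetic) differences are that you verify \eqref{eqn:bounded_minimality} factor-by-factor on all of $\mathcal{H}_2$ instead of via $B^\dagger B\hat{X}_nCC^\dagger=\hat{X}_n$ on $\dom{C^\dagger}$ plus density, and you prove the final Hilbert--Schmidt convergence through the SVD of $Y$ and strong convergence of $Q_n$ rather than through an adapted orthonormal basis of $\ker{C}^\perp$ and a tail-sum argument.
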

In \Cref{sec:unbounded_solution} it is shown that in some cases, all solutions $\hat{X}$ of \eqref{eqn:reduced_rank_approximation_problem} can be unbounded. Since the infimum in \eqref{eqn:bounded_problem} and the minimum in \eqref{eqn:reduced_rank_approximation_problem} are equal by \Cref{prop:solution_to_bounded_problem}, it then follows that the infimum in \eqref{eqn:bounded_problem} is not attained in this case. Indeed, if it were attained by some bounded $\tilde{X}\in\B_{00,r}(\mathcal{H}_2,\mathcal{H}_3)\subset \mathcal{X}_r$, then $\tilde{X}$ would be a bounded minimiser of \eqref{eqn:reduced_rank_approximation_problem}, contradicting the fact that in this case all solutions are unbounded.
\begin{proof}
	Let us define $Y\coloneqq (P_{\overline{\ran{B}}}MP_{\ker{C}^\perp})$. As in the proof of \Cref{lemma:solution_characterisation} \cref{item:surjectivity}, we can use the first equation of \eqref{eqn:truncated_svd_identity} and the line preceding it to find that
	\begin{align*}
		B\hat{X}C = B(B^\dagger YC^\dagger) C = YC^\dagger C = YP_{\ker{C}^\perp} = (P_{\overline{\ran{B}}}MP_{\ker{C}^\perp})_rP_{\ker{C}^\perp},
	\end{align*}
and we can use \Cref{lemma:unbounded_after_finite_rank} to find that $B^\dagger (P_{\overline{\ran{B}}}MP_{\ker{C}^\perp})_r$ is bounded.
	Now, being a composition of bounded operators, $\hat{X}_n$ is bounded. Also, $\hat{X}_n\rightarrow \hat{X}$ pointwise on $\dom{\hat{X}}$ since $C_n^\#\rightarrow C^\dagger$ pointwise on $\dom{C^\dagger}=\dom{\hat{X}}$ by \Cref{thm:outer_inverse_approximation}\ref{item:outer_inverse_approximation_2}. 

	Now, $Q_nP_{\ker{C}^\perp}=Q_n$ since $\ker{Q_n}^\perp=X_n\subset\overline{\ran{C}}$. By \Cref{thm:outer_inverse_approximation}, $C_n^\#=Q_nC^\dagger$ on $\dom{C^\dagger}$. Combining this with \eqref{eqn:third_moore_penrose} yields $C_n^\#C = Q_nC^\dagger C = Q_n$. Using also \eqref{eqn:first_moore_penrose}, \eqref{eqn:third_moore_penrose}, and \eqref{eqn:fourth_moore_penrose}, it follows that
	\begin{align}
		\label{eqn:composition_X_hat_n}
		B\hat{X}_n C &= BB^\dagger(P_{\overline{\ran{B}}}MP_{\ker{C}^\perp})_rC_n^\#C = (P_{\overline{\ran{B}}}MP_{\ker{C}^\perp})_rQ_n, \\
		\label{eqn:minmality_X_hat_n}
		P_{\ker{B}^\perp}\hat{X}_nP_{\overline{\ran{C}}}&= B^\dagger B\hat{X}_nCC^\dagger = B^\dagger(P_{\overline{\ran{B}}}MP_{\ker{C}^\perp})_rQ_nC^\dagger = \hat{X}_n\quad\text{on }\dom{C^\dagger}.
	\end{align}
	Equation \eqref{eqn:minmality_X_hat_n} shows that \eqref{eqn:bounded_minimality} holds, as $\dom{C^\dagger}\subset \mathcal{H}_2$ densely and $\hat{X}_n$ is continuous. 
	As the $(X_n)_n$ are increasing, we can take an ONB $(e_i)_i$ of $\ker{C}^\perp$ such that for all $n$, $\{e_1,\ldots,e_n\}$ is an ONB of $X_n$ and $e_i\in X_n^\perp$, $i>n$. 
Thus, we compute with \eqref{eqn:composition_X_hat_n},
	\begin{align*}
		\norm{B\hat{X}C - B\hat{X}_nC}_{L_2}^2 &= \norm{(P_{\overline{\ran{B}}}MP_{\ker{C}^\perp})_r(P_{\ker{C}^\perp}-Q_n)}_{L_2}^2 \\
		&= \sum_{i>n}^{}\norm{(P_{\overline{\ran{B}}}MP_{\ker{C}^\perp})_re_i}^2 \rightarrow 0,
	\end{align*}
	as the tail of a sum that converges because $(P_{\overline{\ran{B}}}MP_{\ker{C}^\perp})_r$ is finite-rank and thus Hilbert--Schmidt. 
\end{proof}

One could also start with an increasing sequence $(X_n)_n$ and define $Y_n=CX_n$ instead of choosing $(Y_n)_n$ and setting $X_n=C^*Y_n$. In this case the orthogonal projector $P_n$ onto $Y_n$ satisfies $\ran{P_n}=C(\ran{Q}_n)$ and we could try to construct bounded approximations $\tilde{C}_n\coloneqq C^\dagger P_n$ that converge pointwise to $C^\dagger$. If $\overline{\cup_n X_n}=\ker{C}^\perp$, then still $\overline{\cup_n Y_n}=\overline{\ran{C}}$ and the pointwise convergence $\tilde{C}_n\rightarrow C^\dagger$ holds on $\dom{C^\dagger}$. But $C^\dagger P_nCe_i$ is not equal to 0 for $i>n$, so $\tilde{C}_nC = C^\dagger P_n C$ is no longer equal to $Q_n$, which is what we used to show the $L_2(\mathcal{H}_1,\mathcal{H}_4)$ convergence statement in \Cref{prop:solution_to_bounded_problem}. In fact, the sequence,
\begin{align*}
	\sum_{i>n}^{}\norm{(P_{\overline{\ran{B}}}MP_{\ker{C}^\perp})_r(I-C^\dagger P_nC)e_i}^2
\end{align*}
need not converge as $n\to\infty$.
Thus the $L_2(\mathcal{H}_1,\mathcal{H}_4)$ convergence statement in \Cref{prop:solution_to_bounded_problem} will not hold in general for this alternative approximation procedure.

\begin{remark}[Bounded approximations of unbounded approximate minimisers]
	\label{rmk:bounded_approximations}
	The approximate minimisers $(\hat{X}^{(m)})_m$ of some SVD $Y$ of $P_{\overline{\ran{B}}}MP_{\ker{C}^\perp}$ constructed in \Cref{subsec:approximation} may be unbounded. The procedure outlined before \Cref{prop:solution_to_bounded_problem} can be used for each $m$ to approximate $\hat{X}^{(m)}$ with bounded approximations $(\hat{X}^{(m)}_n)_n$. Then $B\hat{X}^{(m)}_nC\rightarrow B\hat{X}^{(m)}C$ in $L_2(\mathcal{H}_1,\mathcal{H}_4)$ as $n\rightarrow \infty.$ In particular, for each $m\in\N$ there exists $n_m\in\N$ such that $\norm{B\hat{X}^{(m)}_{n}C - B\hat{X}^{(m)}C}_{L_2}<m^{-1}$ for all $n\geq n_m$. Hence $B\hat{X}^{(m)}_{n_m}C\rightarrow Y$ in $L_2(\mathcal{H}_1,\mathcal{H}_4)$. Thus $(\hat{X}^{(m)}_{n_m})_m$ is a sequence of bounded approximate minimisers.  Since $B(P_{\ker{B}^\perp}\hat{X}_{n_m}^{(m)}P_{\overline{\ran{C}}})C = B\hat{X}_{n_m}^{(m)}C$ by \eqref{eqn:first_moore_penrose}, \eqref{eqn:third_moore_penrose} and \eqref{eqn:fourth_moore_penrose}, and since $P_{\ker{B}^\perp}\hat{X}_{n_m}^{(m)}P_{\overline{\ran{C}}}$ satisfies \eqref{eqn:bounded_minimality}, we can thus construct a sequence of bounded minimisers which also satisfy the minimality property \eqref{eqn:bounded_minimality}.
\end{remark}

\subsection{Adjoint formulation}
	\label{rmk:adjoint_formulation}
	\Cref{prop:solution_to_bounded_problem} also allows us to pass to an adjoint formulation of \eqref{eqn:reduced_rank_approximation_problem}.
	Let us write $\mathcal{X}_r(B,C)$ and $\mathcal{Y}_r(M,B,C)$ for $\mathcal{X}_r$ and $\mathcal{Y}_r$ defined in \Cref{setting:main} to stress the dependence on $M$, $B$, and $C$. 
	The adjoint problem then is
	\begin{align}
		\label{eqn:adjoint_formulation}
		\min\{\norm{M^*-C^*XB^*}_{L_2}:\ X\in\mathcal{X}_r(C^*,B^*)\},
	\end{align}
	with the minimality property
	\begin{align}
		\label{eqn:adjoint_constraint}
		P_{\overline{\ran{C}}}XP_{\ker{B}^\perp}=X\text{ on }\dom{X}.
	\end{align}
	Note that by \Cref{prop:ker_ran_relation}, the adjoint formulation of the adjoint problem \eqref{eqn:adjoint_formulation}-\eqref{eqn:adjoint_constraint} is the direct problem \eqref{eqn:reduced_rank_approximation_problem}-\eqref{eqn:minimality_constraint}.
	We now discuss why one might want to solve \eqref{eqn:adjoint_formulation} instead of \eqref{eqn:reduced_rank_approximation_problem}. The same reasoning holds for solving \eqref{eqn:adjoint_formulation}-\eqref{eqn:adjoint_constraint} instead of \eqref{eqn:reduced_rank_approximation_problem}-\eqref{eqn:minimality_constraint}. Note that under the assumption $\mathcal{Y}_r(M,B,C)\not=\emptyset$ and $\mathcal{Y}_r(M^*,C^*,B^*)\not=\emptyset$, we have by \Cref{prop:solution_to_bounded_problem}, 
	\begin{align}
		\label{eqn:direct_and_adjoint_problem_relation}
		\begin{split}
			\min\{\norm{M-BXC}_{L_2}^2:\ X\in\mathcal{X}_r(B,C)\} &= \inf\{\norm{M-BXC}_{L_2}^2:\ X\in\mathcal{B}_{00,r}(\mathcal{H}_2,\mathcal{H}_3)\} \\
			&= \inf\{\norm{M^*-C^*X^*B^*}_{L_2}^2:\ X\in\mathcal{B}_{00,r}(\mathcal{H}_2,\mathcal{H}_3)\} \\
			&= \inf\{\norm{M^*-C^*XB^*}_{L_2}^2:\ X\in\mathcal{B}_{00,r}(\mathcal{H}_3,\mathcal{H}_2)\} \\
			&= \min\{\norm{M^*-C^*XB^*}_{L_2}^2:\ X\in\mathcal{X}_r(C^*,B^*)\}.  
		\end{split}
	\end{align}
	Here we used that $(BXC)^*=C^*X^*B^*$ holds true when $B,X,C$ are bounded.  When $\mathcal{Y}_r(M,B,C)=\emptyset$, the minimum of the left hand side becomes an infimum that is not attained. It is, however, still possible that $\mathcal{Y}_r(M^*,C^*,B^*)\not=\emptyset$. In this case, the adjoint problem \eqref{eqn:adjoint_formulation} still has a solution and allows us to recover the minimal Hilbert--Schmidt approximation error, even though \eqref{eqn:reduced_rank_approximation_problem} does not admit a minimal solution. 
	An an example, with $M$, $B$ and $C$ self-adjoint and $B$ and $C$ injective, the conditions $\mathcal{Y}_r(M,B,C)\not=\emptyset$ and $\mathcal{Y}_r(M^*,C^*,B^*)\not=\emptyset$ are equivalent to the conditions $\ran{(M)_r}\subset \dom{B^\dagger}$ and $\ran{(M)_r}\subset\dom{C^\dagger}$ respectively, which need not hold simultaneously. 
	
	If both the direct and adjoint problems admit corresponding solutions in the sense of the following result, then both solutions are bounded. This follows from \Cref{prop:bounded_solutions}, since \cref{item:adjoint} relates to the existence condition $\mathcal{Y}_r(M^*,C^*,B^*)\not=\emptyset$ for the adjoint problem: $\mathcal{Y}_r(M^*,C^*,B^*)\not=\emptyset$ if and only if there exists a truncated rank-$r$ SVD of $P_{\overline{\ran{C^*}}}M^*P_{\ker{B^*}^\perp}$ satisfying \cref{item:adjoint}. More precisely, \Cref{prop:adjoint}\ref{item:direct_solution} can be interpreted as follows. Suppose that the adjoint solution $\hat{X}^a$ of \eqref{eqn:adjoint_formulation}-\eqref{eqn:adjoint_constraint} corresponding to some truncated rank-$r$ SVD $Y^*$ of $P_{\overline{\ran{C^*}}}M^*P_{\ker{B^*}^\perp}$ exists, i.e. $Y^*\in\mathcal{Y}_r(M^*,C^*,B^*)$ and $\hat{X}^a=(C^*)^\dagger Y^*(B^*)^\dagger$. Then \Cref{prop:adjoint}\ref{item:direct_solution} shows that $\hat{X}^a$ is bounded if and only if $Y\in\mathcal{Y}_r(M,B,C)$, i.e.\ if and only if $B^\dagger YC^\dagger$ is well-defined and solves the direct problem \eqref{eqn:reduced_rank_approximation_problem}-\eqref{eqn:minimality_constraint}. Furthermore, since the adjoint formulation of the adjoint problem is the direct problem, we may replace $\hat{X}^a$, $B$, $C$, and $M$ by, respectively, $\hat{X}^d$, $C^*$, $B^*$, and $M^*$. That is, if $Y\in\mathcal{Y}_r(M,B,C)$ so that $\hat{X}^d\coloneqq B^\dagger Y C^\dagger$ is a well-defined solution of \eqref{eqn:reduced_rank_approximation_problem}-\eqref{eqn:minimality_constraint}, then $\hat{X}^d$ is bounded if and only if $Y^*\in\mathcal{Y}_r(M^*,C^*,B^*)$, i.e.\ if and only if $(C^*)^\dagger Y^*(B^*)^\dagger$ is a well-defined solution of \eqref{eqn:adjoint_formulation}-\eqref{eqn:adjoint_constraint}.
		\begin{proposition}
			\label{prop:adjoint}
			Suppose that $Y\in\B(\mathcal{H}_2,\mathcal{H}_3)$ satisfies $Y^*\in\mathcal{Y}_r(M^*,C^*,B^*)$. Then $\hat{X}^a\coloneqq (C^*)^\dagger Y^*(B^*)^\dagger$ is a solution of the adjoint problem \eqref{eqn:adjoint_formulation}-\eqref{eqn:adjoint_constraint}. Furthermore,
			\begin{enumerate}
				\item 
					$\hat{X}^a$ is bounded if and only if $Y\in\mathcal{Y}_r(M,B,C)$,
					\label{item:direct_solution}
				\item
					if $Y\in \mathcal{Y}_r(M,B,C)$, then $\hat{X}^d\coloneqq B^\dagger Y C^\dagger$ defined in \eqref{eqn:hat_X} is a bounded solution of the direct problem \eqref{eqn:reduced_rank_approximation_problem}-\eqref{eqn:minimality_constraint} and $\hat{X}^d=(\hat{X}^a)^*$.
					\label{item:direct_boundedness}
			\end{enumerate}
		\end{proposition}
		\begin{proof}
			By \Cref{thm:main} applied to the adjoint problem, $\hat{X}^a$ solves \eqref{eqn:adjoint_formulation}-\eqref{eqn:adjoint_constraint}. 
			By definition, $Y\in\mathcal{Y}_r(M,B,C)$ is equivalent to $\ran{(P_{\overline{\ran{B}}}MP_{{\ran{C}^\perp}})_r}\subset\dom{B^\dagger}$, so that \Cref{prop:bounded_solutions}\ref{item:bounded}-\ref{item:adjoint} applied to the adjoint problem shows that $\hat{X}^a$ is bounded if and only if $Y\in\mathcal{Y}_r(M,B,C)$. This proves \cref{item:direct_solution}.

			Next, let us assume $Y\in \mathcal{Y}_r(M,B,C)$. By \Cref{thm:main} applied to the direct problem, $\hat{X}^d$ solves \eqref{eqn:reduced_rank_approximation_problem}-\eqref{eqn:minimality_constraint}. 
			The assumption $Y^*\in\mathcal{Y}_r(M^*,C^*,B^*)$ is by definition equivalent to $\ran{(P_{\overline{\ran{C^*}}}M^*P_{{\ker{B^*}^\perp}})_r}\subset\dom{(C^*)^\dagger}$. Thus, by \Cref{prop:bounded_solutions}\ref{item:bounded}-\ref{item:adjoint} applied to the direct problem, $\hat{X}^d$ is bounded.
			It remains to show that $\hat{X}^a=(\hat{X}^d)^*$. Now, $C^*\hat{X}^aB^*=Y^*$ and $B\hat{X}^dC = Y$ by \Cref{lemma:solution_characterisation}\ref{item:surjectivity} applied to the adjoint and direct problems respectively. Then,
			\begin{align*}
				B(\hat{X}^a)^*C = (C^*\hat{X}^aB^*)^* = Y^{**} = Y = B\hat{X}^dC.
			\end{align*}
			Hence it holds on $\dom{C}^\dagger$,
			\begin{align*}
				(\hat{X}^a)^* 
				&=  (P_{\overline{\ran{C}}}\hat{X}^aP_{\ker{B}^\perp})^* 
				= P_{\ker{B}^\perp}(\hat{X}^a)^*P_{\overline{\ran{C}}} \\
				&=  B^\dagger (B(\hat{X}^a)^*C)C^\dagger 
				=  B^\dagger (B\hat{X}^dC)C^\dagger 
				= P_{\ker{B}^\perp}\hat{X}^dP_{\overline{\ran{C}}}
				= \hat{X}^d,
			\end{align*}
			where in the first step we use \eqref{eqn:adjoint_constraint}, in the third and fifth step we use \eqref{eqn:third_moore_penrose} and \eqref{eqn:fourth_moore_penrose}, and in the last step we use \eqref{eqn:minimality_constraint}. Since $\hat{X}^a$ and $\hat{X}^d$ are bounded and $\dom{C^\dagger}\subset\mathcal{H}_2$ densely, it follows that $(\hat{X}^a)^*=\hat{X}^d$ on all of $\mathcal{H}_2$.
		\end{proof}
		By \Cref{prop:adjoint}\ref{item:direct_solution}, an adjoint solution $(C^*)^\dagger Y^*(B^*)^\dagger$ for $Y\in\B(\H_2,\H_3)$, $Y^*\in\mathcal{Y}_r(M^*,C^*,B^*)$, is bounded if and only if $B^\dagger Y C^\dagger$ is a direct solution. In this case, $B^\dagger Y C^\dagger$ is bounded and given by the adjoint of $(C^*)^\dagger Y^* (B^*)^\dagger$, by \Cref{prop:adjoint}\ref{item:direct_boundedness}. Thus, bounded solutions of the adjoint problem \eqref{eqn:adjoint_formulation}-\eqref{eqn:adjoint_constraint} allow one to obtain bounded solutions of the direct problem \eqref{eqn:reduced_rank_approximation_problem}-\eqref{eqn:minimality_constraint} by taking adjoints, and vice versa. In the case that the direct problem has both bounded and unbounded solutions but the adjoint solution has only bounded solutions, this can be used to characterise all bounded solutions of the direct problem as adjoints of the adjoint solutions. An example is given \Cref{sec:unbounded_solution}, where the direct problem has both bounded and unbounded solutions, but the adjoint problem only has bounded solutions by \Cref{rmk:closed_range_case}\ref{item:C_closed_range} with $C$ replaced by $B^*=I$.

\section{Application in Signal Processing, Reduced Rank Regression and Linear Operator Learning}
\label{sec:signal_processing_reduced_rank_regression}
In this section we describe how \Cref{thm:main} can be leveraged to obtain results in infinite-dimensional settings. As example applications, we consider the fields of signal processing, reduced rank regression, and linear operator learning.
We first recall some examples of approximation problems that arise in these applications, before describing a problem that includes these examples as special cases.

In the field of signal processing, one common problem is to reconstruct a random input signal $x\in\R^m$ from a random observation signal $y\in\R^m$ that is related to $x$. One can assume that each observation satisfies $y=Gx+\varepsilon$ for some known linear map $G$ and additive noise $\varepsilon$, or weaker assumptions can be made on the relationship between $x$ and $y$. In \cite{torokhti2009towards}, one considers the problem of reconstructing the signal $x$ as $Ay$, assuming additional structure only in the form of a known covariance matrix of $(x,y)$. To do so, the reconstruction quality is assessed in the expected squared reconstruction error, and thus the objective is to minimise $\mathbb{E}\norm{x-Ay}^2$ over $A$. As the observation signal can be noisy, the rank of $A$ is restricted in order to filter out this noise. For more generality and motivated by applications, one can also allow $x$, $y$, and the reconstruction $Ay$ to be weighted by matrices $W_x,W_y$ and $W_A$. The problem then becomes to find the rank-constrained minimiser $A$ of $\mathbb{E}\norm{W_xx-W_A AW_yy}^2$. 
In \cite{torokhti2009filtering}, one considers an extension of this problem in which not just one signal but sets of signals $\{x(\alpha)\in\R^m:\ \alpha\in K\}$ and $\{y(\alpha)\in\R^m:\ \alpha\in K\}$ are considered, where $K$ is a suitable index set that may be countably or uncountably infinite, e.g. $K=\N$ or $K=[0,1]$, and $x(\alpha)$ and $y(\alpha)$ are finite-dimensional random vectors in $\R^m$ for every $\alpha$.
Below, we consider the settings in \cite{torokhti2009towards,torokhti2009filtering} in the case of infinite-dimensional signals, that is, in case the signals $x(\alpha)$ and $y(\alpha)$ are function-valued for every $\alpha\in K$. 

In the setting of reduced rank regression of \cite{izenman_reduced-rank_1975}, the task is to find a mean vector $\mu$ and rank-restricted matrix $A$ that minimize the mean square error $\mathbb{E}\norm{\Gamma(x-\mu-Ay)}^2$ for random vectors $x$ and $y$ and fixed weighting matrix $\Gamma$. Assuming $x$ and $y$ have zero mean so that $\mu=0$ and writing $W_x=\Gamma=W_A$ and $W_y=I$, we recover the same formulation as in the signal processing setting. This also relates to the regression setting of \cite{turri_randomized_2023}, where $x$ and $y$ are allowed to be infinite-dimensional and are transformed through kernel functions first. Furthermore, in \cite{kacham2021reduced}, one studies the reduced rank regression problem of minimizing $\norm{BX-M}$ where $M$ and $B$ are given matrices, $\norm{\cdot}$ denotes the operator norm, and $X$ is restricted in rank. 
Below, we study the case that $x$ and $y$ in \cite{izenman_reduced-rank_1975} may be infinite-dimensional and that the operator norm in \cite{kacham2021reduced} is replaced with the Hilbert--Schmidt norm. 

A closely related problem that arises in the setting of linear operator learning of \cite{mollenhauer_learning_2023} is to minimise $\mathbb{E}\norm{x-Ay}^2$, where $x$ and $y$ are allowed to take values in a separable Hilbert space, and where the search space for $A$ is taken to be either the space of bounded operators or Hilbert--Schmidt operators between these Hilbert spaces. This has applications in various kinds of regression problems, such as functional regression with functional response; see \cite{mollenhauer_learning_2023}. In this section, we study the operator learning problem in the case that $A$ is bounded and the rank of $A$ is restricted and finite. 

Let $(\Omega,\mathcal{F},\mathbb{P})$ be a countably generated probability space, so that $L^2(\Omega)$ is separable, c.f.\ \cite[Proposition 3.4.5]{cohn_measure_2013}. Let $F,G$ be real separable Hilbert spaces. We let $x\in L^2(\Omega;F)$ and $y\in L^2(\Omega;G)$ be random variables. Here, $L^2(\Omega;F)$ and $L^2(\Omega;G)$ denote Lebesgue-Bochner spaces, so integrability is in the sense of square Bochner integrability, see \cite[Section 1.2]{hytonen_analysis_2016}. These spaces are then separable. This follows for example from \cite[Proposition 1.2.29]{hytonen_analysis_2016} or because $L^2(\Omega;F)\simeq L^2(\Omega)\otimes F$, which is separable as the tensor product of separable Hilbert spaces, c.f \cite[Assumption 2.1.(g)]{klebanov_rigorous_2020}. 
\begin{problem}
	Given weighting operators $W_x\in\B(F,\mathcal{H}_1),W_A\in\B(\mathcal{H}_2,\mathcal{H}_1)$ and $W_y\in\B(G,\mathcal{H}_3)$ for real separable Hilbert spaces $\mathcal{H}_1,\mathcal{H}_2,\mathcal{H}_3$, solve
	\begin{align}
		\min\{\mathbb{E}\norm{W_xx-W_AAW_yy}^2:\ A\in\B_{00,r}(\mathcal{H}_3,\mathcal{H}_2)\}.
		\label{eqn:signal_processing_problem}
	\end{align}
	\label{problem:signal_processing}
\end{problem}
Note that in infinite dimensions it is not a priori clear that the minimum in \eqref{eqn:reduced_rank_approximation_problem} is indeed attained for a bounded operator, c.f.\ \Cref{sec:unbounded_solution}, so \Cref{problem:signal_processing} might not have a solution.
\newline
\newline
If ${F}=G=\R^m$, then we are back in the setting of \cite{torokhti2009towards}. If ${F}=G=L^2(K;\R^m)$ with $(K,\mathcal{K},\lambda)$ a countably generated measure space with $\sigma$-finite measure $\lambda$, then $F$ and $G$ are separable, and we recover the setting of \cite{torokhti2009filtering} by interpreting the infinite set of signals $\{x(\alpha):\ \alpha\in K\}$ as one signal $x\in L^2(\Omega\times K;\R^m)\simeq  L^2(\Omega;L^2(K,\R^m))$. If $W_x,W_y$ and $W_A$ are taken to be the identity, we are in the linear operator learning setting of \cite{mollenhauer_learning_2023} but with an additional rank restriction imposed on $A$. If $W_x=\Gamma=W_A$, $W_y=I$ and $x$ and $y$ are finite-dimensional and have mean 0, then \eqref{eqn:signal_processing_problem} reduces to the reduced rank setting of \cite{izenman_reduced-rank_1975}.

For $z\in L^2(\Omega;E_1),w\in L^2(\Omega;E_2)$ we define the `uncentered covariance' $C_{wz}\in\B(E_1,E_2)$ by 
\begin{align*}
	\langle C_{wz}e,f\rangle \coloneqq \int_{\Omega}^{}\langle w(\omega),f\rangle\langle e,z(\omega)\rangle \d\P(\omega),\quad e\in E_1, f\in E_2.
\end{align*}
We also define $C_z\coloneqq C_{zz}$. Hence with $x$ and $y$ as in \Cref{problem:signal_processing}, $C_x$ and $C_y$ are positive and self-adjoint, while $C_{xy}=C_{yx}^*$. We collect some further facts on covariances in \Cref{lemma:covariance}, which is proven in \Cref{sec:proofs}.
\begin{restatable}{lemma}{covlemma}
	\label{lemma:covariance}
	Let $z\in L^2(\Omega;E_1)$ and $w\in L^2(\Omega;E_2)$. Then the following hold:
	\begin{enumerate}
		\item
			If $E_1=E_2$, then $\tr{C_{wz}}=\mathbb{E}\langle w,z\rangle<\infty$.
			\label{item:covariance_trace}
		\item
			$C_{Qw,z}=QC_{w,z}$ for any separable Hilbert space $E_3$ and $Q\in\B(E_2,E_3)$.
			\label{item:covariance_of_linear_transform}
		\item
			$C_{wz}=C_w^{1/2}UC_z^{1/2}$ for some $U\in\B(E_1,E_2)$ satisfying $\norm{U}\leq 1$ and $U = P_{\overline{\ran{C_w^{1/2}}}}UP_{\overline{\ran{C_z^{1/2}}}}$.
			\label{item:cross_covariance_representation}
	\end{enumerate}
\end{restatable}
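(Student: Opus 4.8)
The plan is to route all three parts through a single auxiliary operator. For $z\in L^2(\Omega;E_1)$ I would introduce $T_z:E_1\to L^2(\Omega)$ defined by $(T_ze)(\omega)\coloneqq\langle e,z(\omega)\rangle$, and analogously $T_w:E_2\to L^2(\Omega)$. Bochner measurability of $z$ makes $\omega\mapsto\langle e,z(\omega)\rangle$ measurable, and the bound $\norm{T_ze}_{L^2(\Omega)}^2=\int_\Omega\abs{\langle e,z(\omega)\rangle}^2\d\P\leq\norm{e}^2\norm{z}_{L^2(\Omega;E_1)}^2$ shows $T_z$ is bounded; expanding in an ONB even gives $\norm{T_z}_{HS}^2=\norm{z}_{L^2(\Omega;E_1)}^2<\infty$, so $T_z$ is Hilbert--Schmidt. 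Matching against the definition of the covariance yields the two identities I will use repeatedly,
\begin{align*}
	T_z^*T_z=C_z,\qquad C_{wz}=T_w^*T_z,
\end{align*}
and in particular $\abs{T_z}=(T_z^*T_z)^{1/2}=C_z^{1/2}$.

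For \cref{item:covariance_of_linear_transform} I would observe that $T_{Qw}=T_wQ^*$, since $(T_{Qw}g)(\omega)=\langle g,Qw(\omega)\rangle=\langle Q^*g,w(\omega)\rangle$. Taking adjoints and composing with $T_z$ gives $C_{Qw,z}=(T_{Qw})^*T_z=QT_w^*T_z=QC_{wz}$. (Equivalently, this falls straight out of the defining integral using $\langle Qw,g\rangle=\langle w,Q^*g\rangle$.)

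For \cref{item:covariance_trace}, since $T_z$ and $T_w$ are Hilbert--Schmidt, $C_{wz}=T_w^*T_z$ is trace class, so its trace is well defined and basis independent. Fixing an ONB $(e_i)_i$ of $E\coloneqq E_1=E_2$,
\begin{align*}
	\tr{C_{wz}}=\sum_i\langle T_w^*T_ze_i,e_i\rangle=\sum_i\int_\Omega\langle e_i,z(\omega)\rangle\,\langle w(\omega),e_i\rangle\,\d\P(\omega).
\end{align*}
I would justify interchanging the sum and the integral by the estimate $\sum_i\int_\Omega\abs{\langle e_i,z\rangle}\,\abs{\langle e_i,w\rangle}\,\d\P\leq\int_\Omega\norm{z}\,\norm{w}\,\d\P\leq\norm{z}_{L^2}\norm{w}_{L^2}<\infty$, obtained by Cauchy--Schwarz in $\ell^2$ pointwise in $\omega$ and then in $L^2(\Omega)$; this same bound delivers the finiteness assertion. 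Parseval's identity collapses the inner sum to $\langle w(\omega),z(\omega)\rangle$, giving $\tr{C_{wz}}=\mathbb{E}\langle w,z\rangle$.

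The heart of the lemma is \cref{item:cross_covariance_representation}, and this is where I expect the most care to be needed. I would take polar decompositions $T_z=V_zC_z^{1/2}$ and $T_w=V_wC_w^{1/2}$, where $V_z,V_w$ are partial isometries with initial spaces $\overline{\ran{C_z^{1/2}}}$ and $\overline{\ran{C_w^{1/2}}}$ respectively. Setting $U\coloneqq V_w^*V_z$ and using that $C_z^{1/2},C_w^{1/2}$ are self-adjoint,
\begin{align*}
	C_w^{1/2}UC_z^{1/2}=(V_wC_w^{1/2})^*V_zC_z^{1/2}=T_w^*T_z=C_{wz}.
\end{align*}
Since partial isometries are contractions, $\norm{U}\leq\norm{V_w^*}\,\norm{V_z}\leq1$. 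The projection identity is the delicate bookkeeping step: the initial-space relation $V_z=V_zP_{\overline{\ran{C_z^{1/2}}}}$ gives $U=UP_{\overline{\ran{C_z^{1/2}}}}$, while the partial-isometry relation $V_w^*=V_w^*V_wV_w^*=P_{\overline{\ran{C_w^{1/2}}}}V_w^*$ gives $U=P_{\overline{\ran{C_w^{1/2}}}}U$, so that $U=P_{\overline{\ran{C_w^{1/2}}}}UP_{\overline{\ran{C_z^{1/2}}}}$. The main obstacle throughout is keeping the initial and final spaces of $V_z,V_w$ straight and checking $T_z^*T_z=C_z$ and $C_{wz}=T_w^*T_z$ against the conjugation convention in the definition of the covariance; once these identities are established the three parts follow as above.
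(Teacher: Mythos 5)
Your proposal is correct. For the first two parts it is essentially the computation the paper performs: the paper proves the identity $C_{Qw,z}=QC_{wz}$ by the same adjoint manipulation inside the defining integral, and the trace formula by the same double Cauchy--Schwarz estimate (pointwise in $\ell^2$, then in $L^2(\Omega)$) followed by Fubini and Parseval. Your factorisation $C_{wz}=T_w^*T_z$ with $T_z,T_w$ Hilbert--Schmidt adds the useful observation that $C_{wz}$ is trace class, which the paper relies on implicitly when it writes $\tr{C_{wz}}$. The genuine difference is in the third part: the paper does not prove it, but cites Baker (1973, Theorem 1) together with the remark in the cited Klebanov et al.\ reference that the argument extends to uncentered covariances, whereas you give a self-contained proof via the polar decompositions $T_z=V_zC_z^{1/2}$, $T_w=V_wC_w^{1/2}$ and $U\coloneqq V_w^*V_z$. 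Your argument is sound: the initial spaces of $V_z,V_w$ are $\overline{\ran{C_z^{1/2}}}$ and $\overline{\ran{C_w^{1/2}}}$, so the projection identity follows from $V_z=V_zP_{\overline{\ran{C_z^{1/2}}}}$ and $V_w^*=V_w^*V_wV_w^*=P_{\overline{\ran{C_w^{1/2}}}}V_w^*$, and the norm bound is immediate since partial isometries are contractions. This is essentially the standard proof of Baker's theorem, so what your route buys is a self-contained and more transparent derivation at the cost of introducing the factorisation through $L^2(\Omega)$. Two small points. First, with the paper's convention $C_{wz}\in\B(E_1,E_2)$, the operator $C_w^{1/2}UC_z^{1/2}$ with $U\in\B(E_1,E_2)$ maps $E_1\to E_2$, so the identity you prove is the type-consistent reading $C_{wz}=C_w^{1/2}UC_z^{1/2}$; the subscript $C_{zw}$ in the lemma statement appears to be a typo that you have silently corrected. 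Second, the identities $T_z^*T_z=C_z$ and $C_{wz}=T_w^*T_z$ as you write them use that the Hilbert spaces are real, which is indeed the setting of the paper's application section; over $\C$ one must adjust for conjugation, as you correctly flag.
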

Since $x,y$ in \Cref{problem:signal_processing} are square integrable, we see from \Cref{lemma:covariance} \cref{item:covariance_trace} that $C_x$, $C_y$ and $C_{xy}$ are all trace-class. Thus $C_x^{1/2},C_y^{1/2}$ are Hilbert--Schmidt. We can now solve \Cref{problem:signal_processing}, assuming that $\ran{W_A}$ is closed. We note that in order to prove this result, we apply the adjoint formulation approach outlined in \Cref{rmk:adjoint_formulation}.
\begin{proposition}
	Suppose that $W_A$ has closed range. 
	Then a solution to \Cref{problem:signal_processing} exists if and only if there exists a truncated SVD $(P_{\overline{\ran{C_y^{1/2}W_y^*}}}(C_y^{1/2})^\dagger C_{yx}W_x^*P_{\ran{W_A}})_r$ satisfying
	\begin{align}
		\ran{(P_{\overline{\ran{C_y^{1/2}W_y^*}}}(C_y^{1/2})^\dagger C_{yx}W_x^*P_{\ran{W_A}})_r}\subset\ran{C_y^{1/2}W_y^*}.
		\label{eqn:existence_assumption_signal_processing}
	\end{align}
	In this case, a solution is given by
	\begin{align}
		\label{eqn:signal_processing_solution}
		\hat{A} = W_A^\dagger \left((C_y^{1/2}W_y^*)^\dagger(P_{\overline{\ran{C_y^{1/2}W_y^*}}}(C_y^{1/2})^\dagger C_{yx}W_x^*P_{\ran{W_A}})_r\right)^*,
	\end{align}
	which also satisfies the minimality property
	\begin{align}
		\label{eqn:signal_processing_minimality}
		P_{\ker{W_A}^\perp}\hat{A}P_{\ker{C_y^{1/2}W_y^*}^\perp}=\hat{A}.
	\end{align}
	Additionally, \eqref{eqn:signal_processing_solution} uniquely specifies a solution of \Cref{problem:signal_processing} that also satisfies \eqref{eqn:signal_processing_minimality} if and only if the truncated SVD in \eqref{eqn:signal_processing_solution} is unique.
	\label{prop:signal_processing}
\end{proposition}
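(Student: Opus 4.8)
The plan is to reduce Problem~\ref{problem:signal_processing} to the abstract rank-constrained approximation problem \eqref{eqn:reduced_rank_approximation_problem} and then invoke \Cref{thm:main}, passing through the adjoint formulation of \Cref{rmk:adjoint_formulation} in order to get a \emph{bounded} solution $\hat A$. First I would rewrite the objective $\mathbb{E}\norm{W_xx-W_AAW_yy}^2$ in Hilbert--Schmidt terms. Using \Cref{lemma:covariance} \cref{item:covariance_trace} and \cref{item:covariance_of_linear_transform}, one expands
\begin{align*}
	\mathbb{E}\norm{W_xx-W_AAW_yy}^2 = \tr{C_{W_xx-W_AAW_yy}},
\end{align*}
and by \cref{item:cross_covariance_representation} factorises the covariance of $(x,y)$ as $C_{xy}=C_x^{1/2}UC_y^{1/2}$ with $\norm{U}\le 1$. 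The key observation is that $\mathbb{E}\norm{Zz}^2=\norm{ZC_z^{1/2}}_{HS}^2$ for $Z$ acting on $z$, since $C_z^{1/2}$ is Hilbert--Schmidt. Applying this with $z=(x,y)$ turns the expected-square objective into $\norm{W_xC_x^{1/2}-W_AAW_yC_y^{1/2}}_{HS}^2$ up to cross terms; after completing the square via the factorisation $C_{xy}=C_x^{1/2}UC_y^{1/2}$, the minimisation becomes, up to an additive constant independent of $A$, of the form $\norm{M-\tilde B A\tilde C}_{HS}^2$ with $M$ built from $U$ (or equivalently from $C_{yx}$), $\tilde B=W_A$ and $\tilde C=W_yC_y^{1/2}$, or rather their adjoints.

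Next I would pass to the adjoint problem as in \Cref{rmk:adjoint_formulation}: taking adjoints turns $W_AAW_yC_y^{1/2}$ into $(C_y^{1/2}W_y^*)A^*W_A^*$, so the relevant operators for \Cref{thm:main} become $B\coloneqq C_y^{1/2}W_y^*$ and $C\coloneqq W_A^*$, with the target matrix $M$ given (after the algebra) by $(C_y^{1/2})^\dagger C_{yx}W_x^*$ or its adjoint. This identification explains the appearance of $P_{\overline{\ran{C_y^{1/2}W_y^*}}}$ (the projector $P_{\overline{\ran B}}$) and $P_{\ran{W_A}}=P_{\ker{C}^\perp}$ (using that $W_A$ has closed range, so $P_{\ker{W_A^*}^\perp}=P_{\overline{\ran{W_A}}}=P_{\ran{W_A}}$) in the statement. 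With these assignments, \Cref{thm:main} applied to the adjoint problem gives existence iff $\mathcal{Y}_r\neq\emptyset$, which is exactly the condition \eqref{eqn:existence_assumption_signal_processing} that the truncated SVD maps into $\ran{C_y^{1/2}W_y^*}=\ran B$ (cf.\ \Cref{rmk:equivalent_range_condition}); the solution formula \eqref{eqn:hat_X} then yields \eqref{eqn:signal_processing_solution} after taking one more adjoint, and the minimality property \eqref{eqn:minimality_constraint} translates into \eqref{eqn:signal_processing_minimality}. Boundedness of $\hat A$ is guaranteed because the adjoint problem has $C=W_A^*$ with closed range (as $W_A$ does), so by \Cref{rmk:closed_range_case} all solutions of the adjoint problem are bounded finite-rank operators; taking adjoints preserves this, delivering a genuine element of $\B_{00,r}(\mathcal{H}_3,\mathcal{H}_2)$ as required by \Cref{problem:signal_processing}. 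Uniqueness transfers directly from the uniqueness clause of \Cref{thm:main}, i.e.\ from uniqueness of the truncated SVD.

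The main obstacle I anticipate is the bookkeeping in the covariance-to-Hilbert--Schmidt reduction: one must carefully track which Moore--Penrose inverses and projectors appear, in particular justifying the substitution $C_y^{1/2}(C_y^{1/2})^\dagger=P_{\overline{\ran{C_y^{1/2}}}}$ and handling the fact that $C_y^{1/2}$ generally does \emph{not} have closed range, so that $(C_y^{1/2})^\dagger$ is unbounded and the factor $(C_y^{1/2}W_y^*)^\dagger$ in \eqref{eqn:signal_processing_solution} must be interpreted on its natural domain. The cross-term cancellation that produces the correct target $M$ (ensuring the residual is genuinely $\norm{M}_{HS}^2-\Delta$ in the sense of \Cref{prop:optimal_error}) also requires the representation $C_{xy}=C_x^{1/2}UC_y^{1/2}$ and the projector identities in \Cref{lemma:covariance} \cref{item:cross_covariance_representation}; getting every projector to land on the right subspace is the delicate part. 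Once the objective is correctly written in the form $\norm{M-C^*A B^*}_{HS}$ with $B,C$ as above, the remainder is a direct application of \Cref{thm:main} via \Cref{rmk:adjoint_formulation} and poses no further difficulty.
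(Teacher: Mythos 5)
Your plan is correct and follows essentially the same route as the paper's proof: the same covariance expansion via \Cref{lemma:covariance}, the same identification $B=C_y^{1/2}W_y^*$, $C=W_A^*$, $M=(C_y^{1/2})^\dagger C_{yx}W_x^*$ after completing the square, the same passage to the adjoint formulation so that the closed range of $W_A^*$ forces boundedness of the solution, and the same translation of the existence, minimality and uniqueness clauses from \Cref{thm:main}. The delicate points you flag (the identity $C_y^{1/2}(C_y^{1/2})^\dagger C_{yx}=C_{yx}$ and the unboundedness of $(C_y^{1/2}W_y^*)^\dagger$) are exactly the ones the paper handles via \eqref{eqn:M_computation} and \Cref{rmk:equivalent_range_condition}.
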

\begin{proof}
	Let $A\in\B_{00,r}(\mathcal{H}_3,\mathcal{H}_2)$.
	Let us also put 
	$B \coloneqq C_y^{1/2}W_y^*\in\B(\mathcal{H}_3,G),$
	$C \coloneqq W_A^*\in\B(\mathcal{H}_1,\mathcal{H}_2),$
	$N \coloneqq BA^*C$ 
	and $M \coloneqq (C_{y}^{1/2})^\dagger C_{yx} W_x^*$. 
	Then $\rank{N}\leq \rank{A^*}=\rank{A}\leq r$, so $N$ is Hilbert--Schmidt. 
	By \Cref{lemma:covariance} \cref{item:cross_covariance_representation}, $C_{yx}=C_y^{1/2}UC_{x}^{1/2}$ with $U\in\mathcal{B}(F,G)$ and $\ran{U}\subset\overline{\ran{C}_y^{1/2}}$. Hence, by \eqref{eqn:third_moore_penrose} and \Cref{prop:ker_ran_relation}, 
	\begin{align}
		\label{eqn:M_computation}
		(C_y^{1/2})^\dagger C_{yx}=P_{\ker{C_y^{1/2}}^\perp}UC_x^{1/2} = P_{\overline{\ran{C_y^{1/2}}}}UC_x^{1/2} = UC_x^{1/2},
	\end{align}
	so that $M$ is a well-defined Hilbert--Schmidt operator, as the product of the Hilbert--Schmidt operator $C_x^{1/2}$ and bounded operators.
	We can now use \Cref{lemma:covariance} \cref{item:covariance_trace,item:covariance_of_linear_transform} to compute 
	\begin{align*}
		\mathbb{E}\norm{W_xx-W_AAW_yy}^2 &= \tr{W_AAW_yC_yW_y^*A^*W_A^*} + \tr{W_xC_xW_x^*}  \\
		&- \tr{W_AAW_yC_{yx}W_x^*}
		-\tr{W_xC_{xy}W_y^*A^*W_A^*}.
	\end{align*}
	Multiplying \eqref{eqn:M_computation} by $C_y^{1/2}$ from the left and using $C_{yx}=C_y^{1/2}UC_x^{1/2}$, we see that $C_y^{1/2}(C_y^{1/2})^\dagger C_{yx}=C_{yx}.$
	As all Hilbert spaces are over $\R$, it holds that $\tr{S^*T}=\tr{ST^*}$ for $S,T\in L_2$. 
	Therefore,
	\begin{align}
		\mathbb{E}\norm{W_xx-W_AAW_yy}^2 =&\tr{W_AAW_yC_yW_y^*A^*W_A^*}\nonumber\\
		&+\tr{W_xC_xW_x^*}- 2\tr{W_AAW_yC_{yx}W_x^*} \nonumber \\
		=& \norm{C_y^{1/2}W_y^*A^*W_A^*}_{L_2}^2 + \norm{W_xC_x^{1/2}}_{L_2}^{2} \nonumber \\
		&- 2\tr{W_AAW_yC_y^{1/2}(C_y^{1/2})^{\dagger}C_{yx}W_x^*}\nonumber \\
		=& \norm{W_xC_x^{1/2}}_{L_2}^2 + \norm{N}_{L_2}^2 - 2\tr{N^*M}\nonumber \\
		=& c + \norm{M-N}_{L_2}^2\nonumber \\
		=& c + \norm{M-BA^*C}_{L_2}^{2} \label{eqn:mean_square_calculation}
	\end{align}
	where $c\coloneqq\norm{W_xC_x^{1/2}}_{L_2}^2 - \norm{M}_{L_2}^2$ does not depend on $A$.
	Since taking the adjoint is a bijective operation from $\B_{00,r}(\mathcal{H}_3,\mathcal{H}_2)$ onto $\B_{00,r}(\mathcal{H}_2,\mathcal{H}_3)$ and since $A^{**}=A$, we see from \eqref{eqn:mean_square_calculation} that $A$ solves \Cref{problem:signal_processing} if and only if $A^*$ solves
	\begin{align}
		\min\{\norm{M-BXC}_{L_2}:\ X\in\B_{00,r}(\mathcal{H}_2,\mathcal{H}_3)\}.
		\label{eqn:bounded_adjoint_problem}
	\end{align}
	Recall $\mathcal{X}_r$ from Setting \ref{setting:main}, with $B$ and $C$ as defined above. The problem \eqref{eqn:bounded_adjoint_problem} does not have a solution when the problem 
	\begin{align}
		\min\{\norm{M-BXC}_{L_2}:\ X\in\mathcal{X}_r\}
		\label{eqn:adjoint_problem}
	\end{align}
	does not have a solution, since $B_{00,r}(\mathcal{H}_2,\mathcal{H}_3)\subset\mathcal{X}_r$.
	If \eqref{eqn:adjoint_problem} does have a solution, then it has a solution $\hat{X}$ of the form \eqref{eqn:hat_X} by \Cref{thm:main}, which then also solves \eqref{eqn:bounded_adjoint_problem}. Indeed, by \Cref{prop:closed_range_thm}, $\ran{W_A^*}$ is closed if and only if $\ran{W_A}$ is closed. By the assumption that $\ran{W_A}$ is closed and by \Cref{rmk:closed_range_case}, $\hat{X}$ is bounded. Thus, \Cref{problem:signal_processing}, \eqref{eqn:bounded_adjoint_problem} and \eqref{eqn:adjoint_problem} are all equivalent, and $X$ solves \Cref{problem:signal_processing} if and only if $X^*$ solves \eqref{eqn:bounded_adjoint_problem} and \eqref{eqn:adjoint_problem}.
	
	Recall the definitions of $B$ and $C$ at the start of the proof. Using \Cref{thm:main} we find that an optimal solution to \eqref{eqn:adjoint_problem} exists if and only if 
	\begin{align*}
		\ran{(P_{\overline{\ran{B}}}(C_y^{1/2})^\dagger C_{yx}W_x^*P_{\ker{C}^\perp})_r}\subset\dom{(C_y^{1/2}W_y^*)^\dagger},
	\end{align*}
	for some truncated rank-$r$ SVD, which by \Cref{rmk:equivalent_range_condition} is equivalent to \eqref{eqn:existence_assumption_signal_processing}. In that case, a solution $\hat{X}$ to \eqref{eqn:adjoint_problem} and \eqref{eqn:bounded_adjoint_problem} that also satisfies 
	\begin{align*}
		P_{\ker{B}^\perp}\hat{X}P_{\overline{\ran{C}}} =\hat{X}
	\end{align*}
	is given by
	\begin{align*}
		\hat{X} = (C_y^{1/2}W_y^*)^\dagger(P_{\overline{\ran{C_y^{1/2}W_y^*}}}(C_y^{1/2})^\dagger C_{yx}W_x^*P_{\ker{W_A^*}^\perp})_r(W_A^*)^\dagger.
	\end{align*}
	This solution is unique if and only if the rank-$r$ truncated SVD in its definition is unique.
	Since $\hat{X}^*$ solves \Cref{problem:signal_processing}, the proof is completed after taking the adjoint and applying \Cref{prop:moore_penrose} \cref{item:mp_inv_adjoint} and \Cref{prop:ker_ran_relation} to $W_A^*$.
\end{proof}

Passing to the adjoint in the formulation \eqref{eqn:adjoint_problem} allows us to use \Cref{thm:main}, as in this case \Cref{thm:main} gives bounded solutions, which are precisely the ones we search for in \Cref{problem:signal_processing}. By \Cref{prop:adjoint} and the discussion following it, every bounded solution can thus be obtained by passing to the adjoint formulation and taking adjoints of the resulting adjoint solutions.
If we would not pass to the adjoint formulation and instead use \Cref{thm:main} on
\begin{align*}
	\mathbb{E}\norm{W_xx-W_AAW_yy}^2 = c + \norm{M^*-C^*AB^*}_{L_2}^{2},
\end{align*}
then we would generally be able to find solutions $\tilde{A}$ without further assumptions, but these may be unbounded since $B^*$ does not have closed range. By \Cref{prop:bounded_solutions}\ref{item:bounded}-\ref{item:adjoint}, assuming boundedness is equivalent to assuming existence condition \eqref{eqn:existence_assumption_signal_processing} for the adjoint solution.

\Cref{problem:signal_processing} considers bounded solutions, 
because otherwise the mean square error $\mathbb{E}\norm{W_xx-W_A\tilde{A}W_yy}^2$ is ill-defined in general. Indeed, it requires $W_yy\in\dom{\tilde{A}}$ almost surely, which is a restrictive condition and is not satisfied in general. In particular, if $W_y=I$ and $y$ has a nondegenerate Gaussian distribution, then $\dom{\tilde{A}}=\ran{C_y^{1/2}}$ is the Cameron--Martin space of $y$ which has zero measure in infinite dimensions, see e.g. \cite[Theorem 2.4.7]{bogachev_gaussian_1998}. Thus, $W_yy\not\in\dom{\tilde{A}}$ almost surely.

When no solution to \Cref{problem:signal_processing} exists, it can still be approximated using the technique described in \Cref{subsec:approximation} and \Cref{rmk:bounded_approximations}, applied to the adjoint formulation \eqref{eqn:adjoint_problem}.

We can now study the problem equivalent to the linear operator learning setting, but with a rank restriction.

\begin{corollary}
	\label{cor:identity_weights_signal_processing}
	A solution to the problem
	\begin{align}
		\label{eqn:unweighted_signal_processing_problem}
		\min\{\mathbb{E}\norm{x-Ay}^2:\ A\in\B_{00,r}(G,F)\}
	\end{align}
	exists if and only if $(\ran{(C_y^{1/2})^\dagger C_{yx}})_r\subset \ran{C_y^{1/2}}$ for some rank-$r$ restricted SVD $(\ran{(C_y^{1/2})^\dagger C_{yx}})_r$. In that case, a solution that also satisfies the minimality property $\hat{A}P_{\ker{C_y}^\perp}=\hat{A}$ is given by
	\begin{align*}
		\hat{A}=\left( (C_y^{1/2})^\dagger (P_{\overline{\ran{C_y^{1/2}}}}(C_y^{1/2})^\dagger C_{yx})_r \right)^*.
	\end{align*}
	This solution is the only solution to \eqref{eqn:unweighted_signal_processing_problem} which satisfies $\hat{A}P_{\ker{C_y}^\perp}=\hat{A}$ if and only if the rank-$r$ reduced SVD is unique.
\end{corollary}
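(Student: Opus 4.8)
The plan is to obtain \Cref{cor:identity_weights_signal_processing} as the special case of \Cref{prop:signal_processing} in which the weighting operators are all identities. Concretely, I would set $\mathcal{H}_1=\mathcal{H}_2=F$ and $\mathcal{H}_3=G$, and take $W_x=I_F$, $W_A=I_F$ and $W_y=I_G$, so that \eqref{eqn:signal_processing_problem} becomes exactly \eqref{eqn:unweighted_signal_processing_problem} with $A\in\B_{00,r}(G,F)$. Since $W_A=I$ has range all of $F$, which is closed, the standing hypothesis of \Cref{prop:signal_processing} is met and the proposition applies directly. It then remains only to simplify the existence condition \eqref{eqn:existence_assumption_signal_processing}, the solution \eqref{eqn:signal_processing_solution}, the minimality property \eqref{eqn:signal_processing_minimality} and the uniqueness clause under these choices.

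The routine substitutions are $W_A^\dagger=I$, $\ran{W_A}=F$ so $P_{\ran{W_A}}=I$, and $\ker{W_A}=\{0\}$ so $P_{\ker{W_A}^\perp}=I$, while $W_x^*=I$ and $W_y^*=I$ give $C_y^{1/2}W_y^*=C_y^{1/2}$ and $(C_y^{1/2}W_y^*)^\dagger=(C_y^{1/2})^\dagger$. After inserting these into \eqref{eqn:signal_processing_solution} the solution collapses to $\hat{A}=\big((C_y^{1/2})^\dagger(P_{\overline{\ran{C_y^{1/2}}}}(C_y^{1/2})^\dagger C_{yx})_r\big)^*$, exactly as claimed.

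The one genuinely substantive point is handling the kernels, ranges and projectors attached to $C_y^{1/2}$, and this rests on the fact that $C_y\geq 0$ is self-adjoint, hence so is $C_y^{1/2}$. From self-adjointness and \Cref{prop:ker_ran_relation} I get $\ker{C_y^{1/2}}=\overline{\ran{C_y^{1/2}}}^\perp$, together with $\ker{C_y^{1/2}}=\ker{C_y}$ (if $C_y^{1/2}h=0$ then $C_yh=0$, and conversely $\norm{C_y^{1/2}h}^2=\langle C_yh,h\rangle$). Hence $\ker{C_y^{1/2}}^\perp=\overline{\ran{C_y^{1/2}}}=\ker{C_y}^\perp$, which turns \eqref{eqn:signal_processing_minimality} into $\hat{A}P_{\ker{C_y}^\perp}=\hat{A}$. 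Moreover, by \Cref{prop:moore_penrose} \cref{item:mp_inv_range} the range of $(C_y^{1/2})^\dagger$ equals $\ker{C_y^{1/2}}^\perp=\overline{\ran{C_y^{1/2}}}$, so $P_{\overline{\ran{C_y^{1/2}}}}(C_y^{1/2})^\dagger=(C_y^{1/2})^\dagger$; applied inside the truncated SVD this removes the projector and reduces \eqref{eqn:existence_assumption_signal_processing} to $\ran{((C_y^{1/2})^\dagger C_{yx})_r}\subset\ran{C_y^{1/2}}$, as stated. The uniqueness assertion carries over verbatim from the uniqueness clause of \Cref{prop:signal_processing}. I do not anticipate a real obstacle; the only care needed is consistently using the self-adjointness of $C_y^{1/2}$ to identify these kernels, ranges and projectors.
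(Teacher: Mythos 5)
Your proposal is correct and follows essentially the same route as the paper: specialise \Cref{prop:signal_processing} to $\mathcal{H}_1=\mathcal{H}_2=F$, $\mathcal{H}_3=G$ with identity weights, then simplify the existence condition, solution formula and minimality property using $\ker{C_y^{1/2}}=\ker{C_y}$ and the redundancy of the projector $P_{\overline{\ran{C_y^{1/2}}}}$. The only cosmetic difference is that you discard that projector via $\ran{(C_y^{1/2})^\dagger}=\ker{C_y^{1/2}}^\perp=\overline{\ran{C_y^{1/2}}}$, whereas the paper invokes \eqref{eqn:M_computation}; both are valid and equally short.
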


\begin{proof}
	We can apply \Cref{prop:signal_processing} for the case $\mathcal{H}_1=\mathcal{H}_2=F$, $\mathcal{H}_3=G$ and $W_A,W_x,W_y$ equal to the identity operators. By \eqref{eqn:M_computation} it holds that $\ran{(C_y^{1/2})^\dagger C_{yx}}\subset \overline{\ran{C_y^{1/2}}}$. The condition \eqref{eqn:existence_assumption_signal_processing} therefore becomes $\ran{((C_y^{1/2})^\dagger C_{yx})_r}\subset\ran{C_y^{1/2}}$. Finally, \eqref{eqn:signal_processing_minimality} becomes $AP_{\ker{C_y^{1/2}}^\perp}=A$. It remains to show that $\ker{C_y^{1/2}}=\ker{C_y}$. If $0=C_yh=C_y^{1/2}C_y^{1/2}h$, then $C_y^{1/2}h\in\ker{C_y^{1/2}}=\ran{C_y^{1/2}}^\perp$ using \Cref{prop:ker_ran_relation}. Since also $C_y^{1/2}h\in\ran{C_y^{1/2}}$, it follows that $\ker{C_y}\subset\ker{C_y^{1/2}}$. The reverse inclusion is immediate.
\end{proof}

With $U$ as in the proof of \Cref{prop:signal_processing}, the equation \eqref{eqn:M_computation} implies that the existence condition $\ran{((C_y^{1/2})^\dagger C_{yx})_r}\subset\ran{C_y^{1/2}}$ is equivalent to $\ran{(UC_x^{1/2})_r}\subset\ran{C_y^{1/2}}$.

\begin{remark}[Condition comparison with \cite{mollenhauer_learning_2023}]	 
	In \cite{mollenhauer_learning_2023} the analogous problem to \eqref{eqn:unweighted_signal_processing_problem} is studied, in which there is no rank restriction on $A$. There, it is shown in \cite[Proposition 3.11]{mollenhauer_learning_2023} that a solution to this analogous problem exists if and only if the existence condition $\ran{C_{yx}}\subset\ran{C_y}$ holds, in which case the solution is given by $(C_y^{\dagger}C_{yx})^*$.
	Under the latter condition, for all $f\in F$ there exists $h\in G$ such that $C_{yx}f = C_yh$. Hence by \eqref{eqn:third_moore_penrose} and \Cref{prop:ker_ran_relation},
	\begin{align*}
		(C_y^{1/2})^\dagger C_{yx}f = P_{\ker{C_y^{1/2}}^\perp}C_y^{1/2}h=P_{\overline{\ran{C_y^{1/2}}}}C_y^{1/2}h=C_y^{1/2}h\in\ran{C_y^{1/2}}.
	\end{align*}
	Hence,
	\begin{align*}
		\ran{((C_y^{1/2})^\dagger C_{yx})_r}\subset \ran{(C_y^{1/2})^\dagger C_{yx}} \subset\ran{C_y^{1/2}}.
	\end{align*}
	We therefore see that the existence condition in \cite{mollenhauer_learning_2023} is stronger than the existence condition of \Cref{cor:identity_weights_signal_processing}. In particular, whenever one can find an optimal continuous linear operator $A$ that solves the problem in \cite{mollenhauer_learning_2023}, i.e. \eqref{eqn:unweighted_signal_processing_problem} without rank restriction on $A$, then one can also find an optimal continuous and rank restricted operator that solves \eqref{eqn:unweighted_signal_processing_problem}. The converse does not hold in general.
\end{remark}

\begin{remark}[Relation between minimality condition and maximal kernel]
	\label{rmk:maximal_kernel}
	The minimality condition $\hat{A}=\hat{A}P_{\ker{C_y}^\perp}$ of \Cref{cor:identity_weights_signal_processing} is equivalent to $\hat{A}(\ker{C}_y)=0$. In \cite[Proposition 3.11 and Remark 3.12(b)]{mollenhauer_learning_2023} it is shown that the solution of the unconstrained problem also satisfies this property, and in fact has maximal kernel among all solutions. The solution of the rank-constrained problem \eqref{eqn:unweighted_signal_processing_problem} can be seen to have maximal kernel among all solutions as well. Indeed, by applying \Cref{cor:all_solutions} to \eqref{eqn:adjoint_problem} and taking the adjoint, and using $\ker{C_y^{1/2}}=\ker{C_y}$ as explained in the proof of \Cref{cor:identity_weights_signal_processing}, we see that an arbitrary solution of \eqref{eqn:unweighted_signal_processing_problem} is of the form $\tilde{A}\coloneqq \hat{A}+T^*P_{\ker{C}_y}$ for some bounded $T:F\rightarrow G$. Now $\ker{(T^*P_{\ker{C}_y})}^\perp\subset \ker{C}_y$. Furthermore, $\ker{\hat{A}}^\perp\subset\ker{C_y}^\perp$ because $\hat{A}(\ker{\mathcal{C}_y})=0$. Hence $\ker{\tilde{A}}$ is maximal if and only if $T^*P_{\ker{C_y}}=0$, which holds if and only if $\tilde{A}=\hat{A}$. Furthermore, $\tilde{A}=\hat{A}$ holds if and only if $\tilde{A}$ satisfies the minimality property $\tilde{A}P_{\ker{C_y}^\perp}=\tilde{A}$. Thus, the minimality property originating from \eqref{eqn:minimality_constraint} is exactly the maximal kernel property mentioned in \cite[Proposition 3.11 and Remark 3.12(b)]{mollenhauer_learning_2023}.
\end{remark}

\section{Conclusion}
Generalised rank-constrained matrix approximation problems in Frobenius norm have had a myriad of applications, for example in signal processing, statistical inverse problems and data transmission systems.
These finite-dimensional approximation problems have been analysed in \cite{sondermann1986best,friedland_generalized_2007,soto2021error}, for example. In this work we have extended the analysis in the following ways.

In \cite{friedland_generalized_2007}, a solution to the generalised rank-constrained approximation problem was proposed, which also satisfies a minimal norm property. We have shown that this property does not hold in general.  We have proposed the modified minimality property \eqref{eqn:corrected_miniminality_constraint} which is satisfied in general. The modified result in the finite-dimensional setting is given in \Cref{cor:friedland_torokthi_case}.

Secondly, we have studied the infinite-dimensional version of the generalised approximation problem, studying existence, uniqueness and continuity of a solution. In particular, we have given necessary and sufficient conditions for existence and uniqueness of a solution. This is done in \Cref{thm:main}. \Cref{thm:main} also gives explicit solutions that satisfy the minimality property. We have also shown that the proposed solutions---and in fact all solutions---may be discontinuous in \Cref{sec:unbounded_solution}, and have given necessary and sufficient conditions for the proposed solutions to be continuous in \Cref{subsec:boundedness_conditions}. In addition, we have constructed approximate solutions when no solution exists in \Cref{subsec:approximation}, and continuous approximations to discontinuous solutions in \Cref{subsec:bounded:approximations}.

Thirdly and finally, we have studied problems from signal processing, reduced rank regression and linear operator learning in infinite-dimensional settings. In particular, we have generalised the signal processing problem of \cite{torokhti2009towards,torokhti2009filtering} in the case where both the unknown signal and the observation are finite-dimensional random vectors, to the case where both can be random functions. We have also studied the reduced rank regression setting of \cite{kacham2021reduced} to infinite dimensions but with Hilbert--Schmidt norm error instead of operator norm error. Finally, we have discussed the linear operator learning setting of \cite{mollenhauer_learning_2023}, where in our case the operator to be learned is not only bounded or Hilbert--Schmidt but has finite rank. In these settings, we used \Cref{thm:main} to find explicit solutions, and to give necessary and sufficient conditions for existence and uniqueness of solutions.

In summary, since finite-dimensional approximation problems often originate from a discretised version of an infinite-dimensional problem, our infinite-dimensional result paves the way for the analysis of various reduced rank approximation problems such as reduced rank linear operator learning in their native infinite-dimensional formulation.

\section{Acknowledgements}
The research of the authors has been partially funded by the Deutsche Forschungsgemeinschaft (DFG) Project-ID 318763901 -- SFB1294. The authors thank Francesco Carere (Euro-Mediterranean Center on Climate Change) for helpful discussions.
\appendix

\section{Elements from Functional Analysis}
\label{sec:elements_fa}

In this section we recall some key concepts and results and introduce notation. We consider specific properties of linear mappings from $\mathcal{H}$ to $\mathcal{K}$ for possibly infinite-dimensional separable Hilbert spaces $\mathcal{H}$ and $\mathcal{K}$ over $\C$ or $\R$.  In particular, we recall the definition of certain operator classes in \Cref{subsec:linear_operators}, the Moore--Penrose inverse in \Cref{subsec:generalised_inverse}, the SVD theorem for compact operators and the Eckart--Young Theorem in \Cref{subsec:svd}. 

\subsection{Linear Operators}
\label{subsec:linear_operators}
To accommodate the treatment of unbounded operators, we introduce the following concept of a linear map, c.f also \cite[Paragraph X.1]{conway_course_2007}.

A linear operator $T:\mathcal{H}\rightarrow\mathcal{K}$ is a function defined on a linear subspace of $\mathcal{H}$, denoted by $\dom{T}$, on which $T$ acts in a linear fashion. We also write $T:\dom{T}\subset\mathcal{H}\rightarrow\mathcal{K}$ to stress that $T$ is only defined on $\dom{T}$ and not necessarily on all of $\mathcal{H}$. When $\dom{T}\subset\mathcal{H}$ densely, we call $T$ `densely defined'. Without loss of generality, one can always assume that $T$ is densely defined when it is linear, as one can put $T$ equal to 0 on $\overline{\dom{T}}^\perp={\dom{T}}^\perp$ and $\dom{T}\oplus\dom{T}^\perp$ is dense in $\mathcal{H}= \overline{\dom{T}}\oplus\dom{T}^\perp$. Here, $U \oplus V$ denotes the direct sum of linear subspaces $U,V$ and $U^\perp$ the orthogonal complement of $U$. If also $S:\mathcal{H}\rightarrow\mathcal{K}$ is linear and $\dom{T}\subset\dom{S}$ and $S=T$ on $\dom{T}$, then we call $S$ an `extension' of $T$. We define the kernel of $T$ by $\ker{T}\coloneqq \{h\in\dom{T}:\ Th=0\}$. If 
\begin{align*}
	\norm{T}\coloneqq \inf\{C>0:\ \norm{Th}\leq C\norm{h},\ h\in\dom{T}\}<\infty
\end{align*}
then we say that $T$ is `bounded', and call $\norm{T}$ its `operator norm'. Otherwise, $T$ is called `unbounded'. As $T$ is linear, it is bounded if and only if it is continuous.
If $T$ is bounded, then it has by continuity a unique bounded extension to $\overline{\dom{T}}$. If $T$ is bounded and $\dim\ran{T}<\infty$, then we say that $T$ is `finite-rank' and define $\rank{T}\coloneqq\dim\ran{T}$. If also $S:\mathcal{Z}\rightarrow\mathcal{H}$ is a linear operator for some separable Hilbert space $\mathcal{Z}$, then we define $T+S$ on $\dom{T+S} \coloneqq \dom{S}\cap\dom{T}$, and $TS$ on $\dom{TS}\coloneqq S^{-1}(\dom{T})$. If $S$ is finite-rank, then so is $TS$. We record this fact below for future reference.
\begin{lemma}
	Let $T:\mathcal{H}\rightarrow\mathcal{K}$ be linear. If $S:\mathcal{Z}\rightarrow\mathcal{H}$ is a finite-rank operator, then so is $TS$ and $\rank{TS}\leq \rank{S}$.
	\label{lemma:unbounded_after_finite_rank}
\end{lemma}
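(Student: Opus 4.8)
The plan is to compute the range of $TS$ explicitly and then exploit the fact that, although $T$ may be unbounded, it is only ever applied to vectors lying in the finite-dimensional subspace $\ran{S}$, on which any linear map is automatically bounded.

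First I would identify $\ran{TS}$. By the definition of composition in \Cref{subsec:linear_operators}, $\dom{TS}=S^{-1}(\dom{T})$, and for any function $S$ one has the set-theoretic identity $S(S^{-1}(\dom{T}))=\ran{S}\cap\dom{T}$. Hence $\ran{TS}=T(\ran{S}\cap\dom{T})$. Writing $V\coloneqq\ran{S}\cap\dom{T}$, this is a linear subspace of $\ran{S}$, so $\dim V\leq\dim\ran{S}=\rank{S}$. Since the image of a finite-dimensional space under a linear map has dimension no larger than that of its domain, I obtain $\dim\ran{TS}=\dim T(V)\leq\dim V\leq\rank{S}$. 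This already yields finite-dimensionality of the range together with the desired rank bound.

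The remaining and main point is boundedness of $TS$. The key observation is that $V$ is finite-dimensional, so the restriction $\restr{T}{V}:V\to\mathcal{K}$ is a linear map on a finite-dimensional normed space and is therefore bounded; let $C$ be such a bound, so $\norm{Tv}\leq C\norm{v}$ for all $v\in V$. For $z\in\dom{TS}$ we have $Sz\in\ran{S}$ and, since $z\in S^{-1}(\dom{T})$, also $Sz\in\dom{T}$, whence $Sz\in V$. Therefore $\norm{TSz}=\norm{T(Sz)}\leq C\norm{Sz}\leq C\norm{S}\norm{z}$, where $\norm{S}<\infty$ because $S$ is bounded. This shows $TS$ is bounded with $\norm{TS}\leq C\norm{S}$.

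Combining the two parts, $TS$ is bounded with finite-dimensional range of dimension at most $\rank{S}$, i.e. $TS$ is finite-rank and $\rank{TS}\leq\rank{S}$. The one subtlety to handle carefully is the identity $S(S^{-1}(\dom{T}))=\ran{S}\cap\dom{T}$, and the conceptual crux is recognising that the potential unboundedness of $T$ is harmless here precisely because $T$ only ever acts on the finite-dimensional space $\ran{S}$.
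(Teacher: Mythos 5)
Your proof is correct and follows essentially the same route as the paper's: boundedness comes from the observation that a linear map on a finite-dimensional domain is automatically bounded, and the rank bound from a dimension count on the image of $\ran{S}$ (the paper phrases this via the rank--nullity theorem). You are in fact slightly more careful than the paper, which restricts $T$ to $\ran{S}$ without remarking that $\ran{S}$ need not be contained in $\dom{T}$; your use of $V=\ran{S}\cap\dom{T}$ handles that point cleanly.
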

\begin{proof}
	As $\rank{S}$ is finite, $\restr{T}{\ran{S}}$ is bounded, since every linear operator is bounded on a finite-dimensional domain. 	Also,
	\begin{align*}
		\rank{TS}&= \rank{\restr{T}{\ran{S}}}=\dim{\ran{S}}-\dim{\ker{\restr{T}{\ran{S}}}}\\
		&\leq \dim{\ran{S}}= \rank{S},
	\end{align*}
	by the rank-nullity theorem in linear algebra, which can be applied because $\dom{\restr{T}{\ran{S}}}=\ran{S}$ is finite-dimensional.
\end{proof}

The following is a classical example of an unbounded operator.
\begin{example}
	Let $\mathcal{H}$ be an infinite-dimensional separable Hilbert space and $(e_n)_n$ an orthonormal system in $\mathcal{H}$. Let $(\gamma_n)_n\subset\C\backslash\{0\}$ be a sequence converging to zero. Let $\mathcal{D}\coloneqq\{h\in\mathcal{H}:\ \sum_{n}^{}\gamma_n^{-2}\abs{\langle h,e_n\rangle}^2<\infty\}$. We define the mapping,
	\begin{align*}
		T:\mathcal{D}\subset\mathcal{H}\rightarrow\mathcal{H},\quad h\mapsto \sum_{n}^{}\gamma_n^{-1}\langle h,e_n\rangle e_n. 
	\end{align*}
	Then $T$ is linear, but not bounded, since $\norm{Te_n}=\abs{\gamma_n}^{-1}$ diverges as $n\rightarrow\infty$. Furthermore, $T$ is densely defined if and only if $(e_n)_n$ forms an orthonormal basis. We can extend $T$ to a densely defined operator $\bar{T}$ on $\mathcal{H}$, by putting $\bar{T}h=0$ for $h\in\mathcal{D}^\perp$ and $\bar{T}h=Th$ for $h\in\mathcal{D}$. Then $\bar{T}$ is a well-defined linear operator on $\mathcal{D}\oplus \mathcal{D}^\perp$, which is dense in $\mathcal{H}$.
	\label{ex:unbounded_operator}
\end{example}
We endow $\B(\mathcal{H}$, $\mathcal{K})$, $\B_0(\mathcal{H}$, $\mathcal{K})$, $\B_{00}(\mathcal{H},\mathcal{K})$ and $\B_{00,r}(\mathcal{H},\mathcal{K})$ with the operator norm. We recall that an operator between Hilbert spaces is compact when it is a limit of finite-rank operators in operator norm. When $\mathcal{H}=\mathcal{K}$ we simply write $\B(\mathcal{H})$, $\B_0(\mathcal{H})$, $\B_{00}(\mathcal{H})$ and $\mathcal{B}_{00,r}(\mathcal{H})$. For $T\in\B(\mathcal{H},\mathcal{K})$ we denote its Hilbert space adjoint by $T^*\in\B(\mathcal{K},\mathcal{H})$. That is, $T^*$ is the unique operator satisfying
$\langle Th,k\rangle = \langle h,T^*k\rangle$ for $h\in \mathcal{H},k\in \mathcal{K}$.
If $T=T^*$, then $T$ is `self-adjoint'. If $\langle Th,h\rangle \geq 0$ for all $h\in\mathcal{H}$, then $T$ is `positive'. If $T$ is positive and self-adjoint, there exists a unique positive self adjoint square root $T^{1/2}$ with $T^{1/2}T^{1/2}=T$. For $T\in\mathcal{B}(\mathcal{H},\mathcal{K})$ arbitrary, we have that $T^*T$ is positive and self adjoint and can define $\abs{T}\coloneqq (T^*T)^{1/2}$. We refer to \cite[Sections II.2, II.4, VIII.3]{conway_course_2007} for more on the above notions.
The following relation between $T$ and $T^*$ holds. Note that it implies that $\ker{T}=\ran{T^*}^\perp$ and $\ker{T^*}=\ran{T}^\perp$.
\begin{proposition}
	Let $T\in\B(\mathcal{H},\mathcal{K})$. Then $\overline{\ran{T}}=\ker{T^*}^\perp$ and $\overline{\ran{T^*}}=\ker{T}^\perp$.
	\label{prop:ker_ran_relation}
\end{proposition}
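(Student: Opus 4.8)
The plan is to reduce both identities to the single elementary fact that $\ker{T^*}=\ran{T}^\perp$, and then obtain everything else by taking orthogonal complements and by swapping the roles of $T$ and $T^*$. The only ingredients are the defining relation $\langle Th,k\rangle=\langle h,T^*k\rangle$ of the adjoint and two standard Hilbert space facts: that $S^\perp=(\overline{S})^\perp$ for any subset $S$, and that $(V^\perp)^\perp=\overline{V}$ (so $(V^\perp)^\perp=V$ for closed subspaces $V$), the latter relying on completeness of the underlying spaces.

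First I would establish $\ker{T^*}=\ran{T}^\perp$ directly. For $k\in\mathcal{K}$, one has $k\in\ker{T^*}$ if and only if $T^*k=0$, which by nondegeneracy of the inner product is equivalent to $\langle h,T^*k\rangle=0$ for every $h\in\mathcal{H}$. By the adjoint relation this says $\langle Th,k\rangle=0$ for all $h\in\mathcal{H}$, i.e. $k\perp\ran{T}$, which is precisely $k\in\ran{T}^\perp$. This proves the identity $\ker{T^*}=\ran{T}^\perp$.

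Next I would pass to orthogonal complements. Since $\ran{T}^\perp=(\overline{\ran{T}})^\perp$, the identity just proved reads $\ker{T^*}=(\overline{\ran{T}})^\perp$. Taking orthogonal complements of both sides and using $(V^\perp)^\perp=V$ for the closed subspace $V=\overline{\ran{T}}$ yields $\ker{T^*}^\perp=\overline{\ran{T}}$, which is the first claimed equation. For the second, I would apply the first result with $T^*\in\B(\mathcal{K},\mathcal{H})$ in place of $T$; since $(T^*)^*=T$, this gives $\overline{\ran{T^*}}=\ker{(T^*)^*}^\perp=\ker{T}^\perp$, as required. The parenthetical remarks $\ker{T}=\ran{T^*}^\perp$ and $\ker{T^*}=\ran{T}^\perp$ are then immediate: the second is exactly the identity derived above, and the first follows by applying that identity to $T^*$.

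There is no serious obstacle here; the argument is purely formal. The only point requiring a little care is the invocation of $(V^\perp)^\perp=V$ for closed subspaces, which is where completeness of $\mathcal{H}$ and $\mathcal{K}$ enters, and the elementary but necessary observation that orthogonal complements only see the closure of a set, so that the complement of $\ran{T}$ coincides with that of $\overline{\ran{T}}$.
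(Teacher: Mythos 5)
Your proof is correct. The paper does not supply its own argument for this proposition---it simply cites \cite[Theorem VI.1.8]{conway_course_2007}---and your derivation (establishing $\ker{T^*}=\ran{T}^\perp$ from the adjoint relation, passing to orthogonal complements via $(V^\perp)^\perp=\overline{V}$, and swapping $T$ and $T^*$ using $T^{**}=T$) is precisely the standard textbook proof of that cited result.
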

This can be found in for example \cite[Theorem VI.1.8]{conway_course_2007}. We also recall part of the closed range theorem, c.f.\ \cite[Theorem VI.1.10]{conway_course_2007}.
\begin{proposition}
	Let $T\in\B(\mathcal{H},\mathcal{K})$. Then $\ran{T}$ is closed if and only if $\ran{T^*}$ is closed.
	\label{prop:closed_range_thm}
\end{proposition}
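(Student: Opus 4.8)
The plan is to exploit the symmetry of the statement together with the identity $T^{**}=T$, which holds for any bounded operator: since the roles of $T$ and $T^*$ are interchangeable, it suffices to prove one implication, say that $\ran{T}$ closed implies $\ran{T^*}$ closed. The whole argument then rests on reformulating the closed-range property as a \emph{lower bound modulo the kernel}, a form that transfers cleanly between $T$ and $T^*$.

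First I would establish the standard equivalence: $\ran{T}$ is closed if and only if there exists $c>0$ with $\norm{Th}\geq c\norm{h}$ for all $h\in\ker{T}^\perp$. For the nontrivial direction, I would note that $\restr{T}{\ker{T}^\perp}$ is a continuous bijection onto $\ran{T}$; once $\ran{T}$ is known to be closed it is itself a Hilbert space, so the inverse mapping theorem (c.f. \cite[Theorem III.12.5]{conway_course_2007}) furnishes a bounded inverse, which is exactly the asserted lower bound. The one subtlety is that one must consistently pass to the representative $P_{\ker{T}^\perp}h$, using $Th=TP_{\ker{T}^\perp}h$, so that the restricted operator is injective. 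Next I would transfer the bound to $T^*$. By \Cref{prop:ker_ran_relation} we have $\ker{T^*}^\perp=\overline{\ran{T}}=\ran{T}$, the last equality because $\ran{T}$ is closed. Given $k\in\ker{T^*}^\perp$, I write $k=Th$ with $h\in\ker{T}^\perp$ and estimate
\begin{align*}
	\norm{k}^2=\langle Th,k\rangle=\langle h,T^*k\rangle\leq\norm{h}\,\norm{T^*k}\leq c^{-1}\norm{Th}\,\norm{T^*k}=c^{-1}\norm{k}\,\norm{T^*k},
\end{align*}
which yields $\norm{T^*k}\geq c\norm{k}$ on $\ker{T^*}^\perp$. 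Applying the equivalence from the first step to $T^*$, whose kernel complement is exactly $\ker{T^*}^\perp=\overline{\ran{T}}$, then shows that $\ran{T^*}$ is closed, completing the proof by symmetry.

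The main obstacle I expect is the first step — the equivalence between closedness of the range and the lower bound — rather than the transfer computation, which is a short Cauchy--Schwarz argument once the ranges have been identified. The delicate points are the correct invocation of the inverse mapping theorem on the range (which must first be recognized as complete precisely because it is closed) and the careful bookkeeping of kernels and their orthogonal complements via \Cref{prop:ker_ran_relation}; once these are in place, both the lower-bound transfer and the final closedness conclusion follow with no further machinery.
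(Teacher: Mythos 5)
Your proof is correct. Note, however, that the paper does not prove \Cref{prop:closed_range_thm} at all: it simply records the statement as part of the closed range theorem and cites \cite[Theorem VI.1.10]{conway_course_2007}, so there is no in-paper argument to compare against. Your route --- reducing to one implication via $T^{**}=T$, characterising closedness of the range by the lower bound $\norm{Th}\geq c\norm{h}$ on $\ker{T}^\perp$ (inverse mapping theorem in one direction, the standard Cauchy-sequence argument in the other), and transferring the bound to $T^*$ by the Cauchy--Schwarz computation $\norm{k}^2=\langle h,T^*k\rangle\leq c^{-1}\norm{k}\norm{T^*k}$ --- is the standard self-contained Hilbert-space proof, and it is genuinely more elementary than invoking the general Banach-space closed range theorem, since it needs only the open mapping theorem and \Cref{prop:ker_ran_relation}. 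All the delicate points are handled correctly: you pass to the representative $P_{\ker{T}^\perp}h$ so that the restriction is injective, you use closedness of $\ran{T}$ to identify $\ker{T^*}^\perp=\overline{\ran{T}}=\ran{T}$ before writing $k=Th$, and the constant $c$ transfers with the same value. The only direction you leave implicit is that the lower bound for $T^*$ on $\ker{T^*}^\perp$ implies $\ran{T^*}$ is closed; a complete writeup should include the two-line Cauchy-sequence argument there, but this is routine and does not affect correctness.
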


We also recall the definition of Hilbert--Schmidt operators and the Hilbert--Schmidt norm. If $T\in\B(\mathcal{H},\mathcal{K})$ satisfies $\sum_{i}^{}\norm{Te_i}^2<\infty$ for some ONB $(e_i)_i$ of $\mathcal{H}$, then we recall that $T$ is called a `Hilbert--Schmidt' operator and $\norm{T}_{L_2}\coloneqq\left(\sum_{i}^{}\norm{Te_i}^2\right)^{1/2}$ is independent of the choice of $(e_i)_i$. The linear space consisting of all Hilbert--Schmidt operators $L_2(\mathcal{H},\mathcal{K})\coloneqq\{T\in\B(\mathcal{H},\mathcal{K}):\ \sum_{i}^{}\norm{Te_i}^2<\infty\}$ becomes a Hilbert space, with inner product $\langle T,S\rangle_{L_2}\coloneqq\sum_{i}^{}\langle Te_i,Se_i\rangle$, which is again independent of the choice of ONB. We call $\langle\cdot,\cdot\rangle_{L_2}$ the `trace inner product', and it induces the Hilbert--Schmidt norm $\norm{\cdot}_{L_2}$. 
For $h\in\mathcal{H},k\in\mathcal{K}$, we can interpret the tensor product $k\otimes h\in \mathcal{H}\otimes\mathcal{K}\simeq L_2(\mathcal{H},\mathcal{K})$ as the rank-1 map $\tilde{h}\mapsto \langle \tilde{h},h\rangle k$.
If $T\in\mathcal{B}(\mathcal{H},\mathcal{K})$ is the product of two Hilbert--Schmidt operators, then it is called `trace-class'. For a trace-class operator $T$ we define its trace $\tr{T}\coloneqq \sum_{i}^{}\langle Te_i,f_i\rangle$, where $(e_i)_i$ and $(f_i)_i$ are any ONBs of $\mathcal{H}$ and $\mathcal{K}$ respectively. The value of $\tr{T}$ does not depend on the choice of these ONBs. If $T\in L_2(\mathcal{H},\mathcal{K})$, then $\abs{T}^{1/2}$ is trace-class. We recall that a trace-class operator is Hilbert--Schmidt, and a Hilbert--Schmidt operator is compact. Finally, we notice that by the above definitions, $\langle T,S\rangle_{L_2}=\tr{S^*T}$ for $S,T\in L_2(\mathcal{H},\mathcal{K})$. For a discussion on Hilbert--Schmidt and trace-class operators, we refer to \cite[Section 3.18]{conway_course_2000}.

\subsection{Moore--Penrose inverse}
\label{subsec:generalised_inverse}
We recall the definitions of the Moore--Penrose inverse on Hilbert spaces, see also for example \cite[Section 2.1]{engl_regularization_1996} or \cite[Section 3.5]{hsing_theoretical_2015}.

Let $T\in\B(\mathcal{H},\mathcal{K})$. The Moore--Penrose inverse $T^\dagger$ of $T$ is given by a densely defined linear operator $\mathcal{K}\rightarrow \mathcal{H}$ constructed as follows. The continuous linear map $\tilde{T}\coloneqq\restr{T}{\ker{T}^\perp}:\ker{T}^\perp\rightarrow \mathcal{K}$ is injective and hence invertible on $\ran{T}$. The inverse $\tilde{T}^{-1}$ is thus well-defined on $\ran{T}$. Also, $\tilde{T}^{-1}$ is bounded if and only if $\ran{T}$ is closed. Let $\dom{T^\dagger}\coloneqq\ran{T} \oplus \ran{T}^\perp$, 
which is dense in $\overline{\ran{T}}\oplus \ran{T}^\perp= \mathcal{K}$. Since every $x\in\mathcal{K}$ has a unique decomposition $x=h+k$ for $h\in\ran{T}$ and $k\in\ran{T}^\perp$, we can define the action of $T^\dagger$ on $\dom{T^\dagger}$ unambiguously via $T^\dagger(h+k)\coloneqq\tilde{T}^{-1}h$. Thus, $T^\dagger:\mathcal{K}\rightarrow\mathcal{H}$ is linear.

The following holds, the proof of which can be found in \cite[Propositions 2.3 and 2.4]{engl_regularization_1996} for \cref{item:mp_inv_range,item:mp_inv_equations,item:mp_inv_closed,item:mp_inv_boundedness}. \Cref{item:mp_inv_adjoint} is a special case of \cite[Theorem 9.3.2 (g)]{ben-israel_generalized_2003}, as the conclusion also holds for closed operators and for operators that do not satisfy the closed range condition.
\begin{proposition}
	\label{prop:moore_penrose}
	Let $\mathcal{H},\mathcal{K}$ be Hilbert spaces and $T\in\B(\mathcal{H},\mathcal{K})$. Then the following hold:
	\begin{enumerate}
		\item 
			$\ran{T^\dagger} = \ker{T}^\perp$.
			\label{item:mp_inv_range}
		\item
			$T$ and $T^\dagger$ satisfy the `Moore--Penrose equations': 
			\begin{align}
				TT^\dagger T &= T,\label{eqn:first_moore_penrose}\\
				T^\dagger TT^\dagger &=  T^\dagger,\label{eqn:second_moore_penrose}\\
				T^\dagger T &=  P_{\ker{T}^\perp},\label{eqn:third_moore_penrose}\\
				TT^\dagger &= \restr{P_{\overline{\ran{T}}}}{\dom{T^\dagger}}.\label{eqn:fourth_moore_penrose}
			\end{align}
			\label{item:mp_inv_equations}
		\item
			$T^\dagger$ is a closed operator, i.e. it has closed graph.
			\label{item:mp_inv_closed}
		\item $T^\dagger$ is bounded if and only if $T$ has closed range.
			\label{item:mp_inv_boundedness}
		\item If $T$ has closed range, then $(T^*)^\dagger=(T^\dagger)^*$.
			\label{item:mp_inv_adjoint}
	\end{enumerate}
\end{proposition}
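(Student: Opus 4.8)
The plan is to work directly from the construction of $T^\dagger$ recalled above, namely via the injective restriction $\tilde{T}\coloneqq\restr{T}{\ker{T}^\perp}$, which is a bijection from $\ker{T}^\perp$ onto $\ran{T}$, and its inverse $\tilde{T}^{-1}$ defined on $\dom{T^\dagger}=\ran{T}\oplus\ran{T}^\perp$. Parts \cref{item:mp_inv_range} and \cref{item:mp_inv_equations} I would obtain by elementary computation with orthogonal decompositions. For \cref{item:mp_inv_range}, since $T^\dagger$ annihilates $\ran{T}^\perp$ and equals $\tilde{T}^{-1}$ on $\ran{T}$, its range is $\ran{\tilde{T}^{-1}}=\ker{T}^\perp$. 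For the Moore--Penrose equations, I would first prove \eqref{eqn:third_moore_penrose} and \eqref{eqn:fourth_moore_penrose}: decomposing $x=x_0+x_1\in\ker{T}^\perp\oplus\ker{T}$ gives $T^\dagger Tx=\tilde{T}^{-1}\tilde{T}x_0=x_0=P_{\ker{T}^\perp}x$, and decomposing $y=h+k\in\ran{T}\oplus\ran{T}^\perp$ gives $TT^\dagger y=\tilde{T}\tilde{T}^{-1}h=h=P_{\overline{\ran{T}}}y$. Equations \eqref{eqn:first_moore_penrose} and \eqref{eqn:second_moore_penrose} then follow by composing: $TT^\dagger T=TP_{\ker{T}^\perp}=T$ since $T$ kills $\ker{T}$, and $T^\dagger TT^\dagger=P_{\ker{T}^\perp}T^\dagger=T^\dagger$ since $\ran{T^\dagger}\subset\ker{T}^\perp$ by \cref{item:mp_inv_range}.

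For \cref{item:mp_inv_closed} I would verify directly that the graph of $T^\dagger$ is closed, crucially using the continuity of $T$. Suppose $y_n\in\dom{T^\dagger}$ with $y_n\to y$ in $\mathcal{K}$ and $T^\dagger y_n\to z$. Each $T^\dagger y_n$ lies in the closed subspace $\ker{T}^\perp$, so $z\in\ker{T}^\perp$; applying the bounded operator $T$ and using \eqref{eqn:fourth_moore_penrose} gives $Tz=\lim_n TT^\dagger y_n=\lim_n P_{\overline{\ran{T}}}y_n=P_{\overline{\ran{T}}}y$. Hence $P_{\overline{\ran{T}}}y=Tz\in\ran{T}$, so $y=P_{\overline{\ran{T}}}y+(I-P_{\overline{\ran{T}}})y\in\ran{T}\oplus\ran{T}^\perp=\dom{T^\dagger}$, and $T^\dagger y=\tilde{T}^{-1}(Tz)=z$ because $z\in\ker{T}^\perp$.

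Part \cref{item:mp_inv_boundedness} I would prove in both directions using standard Banach-space theorems. If $\ran{T}$ is closed, then $\dom{T^\dagger}=\ran{T}\oplus\ran{T}^\perp=\mathcal{K}$, so $T^\dagger$ is everywhere defined and closed by \cref{item:mp_inv_closed}, hence bounded by the closed graph theorem; equivalently, $\tilde{T}$ is a continuous bijection between the Hilbert spaces $\ker{T}^\perp$ and $\ran{T}$ (both closed), so $\tilde{T}^{-1}$ is bounded by the bounded inverse theorem. Conversely, if $T^\dagger$ is bounded and $Tx_n\to y$ with $x_n\in\ker{T}^\perp$, then $x_n=T^\dagger Tx_n=T^\dagger(Tx_n)$ is Cauchy, so $x_n\to x$, and continuity of $T$ yields $y=Tx\in\ran{T}$; thus $\ran{T}$ is closed.

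Finally, for \cref{item:mp_inv_adjoint} under the closed-range hypothesis, all of $T^\dagger$, $(T^*)^\dagger$ and their adjoints are bounded (using \Cref{prop:closed_range_thm} to get that $\ran{T^*}$ is closed), and I would invoke the fact that for a bounded operator with closed range the Moore--Penrose inverse is the unique bounded operator $S$ satisfying $TST=T$, $STS=S$, and $TS$, $ST$ self-adjoint. Taking adjoints of the four equations for $T$ shows that $S\coloneqq(T^\dagger)^*$ satisfies exactly these four conditions for $T^*$, with the roles of the third and fourth equations interchanged, whence $(T^*)^\dagger=(T^\dagger)^*$ by uniqueness. The main obstacle is that this clean four-equation characterisation, together with the self-adjointness of $TT^\dagger$ and $T^\dagger T$, genuinely relies on the closed-range assumption; extending \cref{item:mp_inv_adjoint} to the general unbounded, non-closed-range situation asserted in the surrounding text (via \cite{ben-israel_generalized_2003}) would instead require careful tracking of the domains of the unbounded operators involved, which is where I expect the real difficulty to lie.
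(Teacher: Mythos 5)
The paper does not prove this proposition at all: it is a recalled standard result, with items \ref{item:mp_inv_range}--\ref{item:mp_inv_boundedness} attributed to \cite[Propositions 2.3 and 2.4]{engl_regularization_1996} and item \ref{item:mp_inv_adjoint} to \cite[Theorem 9.3.2 (g)]{ben-israel_generalized_2003}. Your self-contained argument is correct and follows essentially the same construction-based route as those references: deriving \eqref{eqn:third_moore_penrose} and \eqref{eqn:fourth_moore_penrose} first from the decompositions $\mathcal{H}=\ker{T}^\perp\oplus\ker{T}$ and $\dom{T^\dagger}=\ran{T}\oplus\ran{T}^\perp$, then obtaining \eqref{eqn:first_moore_penrose} and \eqref{eqn:second_moore_penrose} by composition (the domain bookkeeping is fine since $\ran{T}\subset\dom{T^\dagger}$ and $\ran{T^\dagger}=\ker{T}^\perp$), proving closedness of the graph via continuity of $T$ and of $P_{\overline{\ran{T}}}$, and getting both directions of \cref{item:mp_inv_boundedness} from the closed graph theorem and a Cauchy-sequence argument. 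For \cref{item:mp_inv_adjoint} you correctly note that under the closed-range hypothesis $TT^\dagger$ and $T^\dagger T$ are orthogonal projections, hence self-adjoint, so that taking adjoints of the four Penrose equations exhibits $(T^\dagger)^*$ as a bounded solution of the Penrose equations for $T^*$; the one ingredient you invoke without proof is the uniqueness of that bounded four-equation characterisation, which is classical for bounded closed-range operators and is exactly the setting the proposition restricts to, so this is an acceptable black box. Your closing remark is also accurate: the full generality of \cite[Theorem 9.3.2 (g)]{ben-israel_generalized_2003} (closed, densely defined, possibly non-closed-range operators) would require genuine domain tracking, but none of that is needed for the statement as given.
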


We consider an example in which the Moore--Penrose operator is unbounded. 
\begin{example}
	\label{ex:moore_penrose_inverse}
	Assume the setting of \Cref{ex:unbounded_operator} with $(e_n)_n$ an orthonormal basis. Hence $\mathcal{D}\subset\mathcal{H}$ densely, $\mathcal{D}^\perp=\{0\}$ and $\bar{T}=T$. We define $S\in\mathcal{B}_0(\mathcal{H})$ as
	\begin{align*}
		Sh = \sum_{n}^{}\gamma_n\langle h,e_n\rangle e_n,\quad h\in\mathcal{H}.
	\end{align*}
	As $\gamma_n\rightarrow 0$, $S$ is indeed compact as the limit of the finite-rank operators $h\mapsto \sum_{n=1}^{N}\gamma_n\langle h,e_n\rangle e_n$ in $B(\mathcal{H},\mathcal{K})$ when $N\rightarrow\infty$.
	Also, $\ran{S}=\mathcal{D}$ and $\ker{S}=\{0\}$. If $k\in\mathcal{D}$, then $h\coloneqq \sum_{n}^{}\gamma_n^{-1}\langle k,e_n\rangle e_n=Tk\in\mathcal{H}$ and $k=Sh$. Thus $S^\dagger:\mathcal{D}= \mathcal{D}\oplus\mathcal{D}^\perp\rightarrow\mathcal{H}$ is given by ${T}$. The condition $k\in\mathcal{D}$ is called the ``Picard criterion'', see e.g. \cite[Theorem 2.8]{engl_regularization_1996}, and under this condition $S^\dagger k$ is well-defined.
\end{example}

\subsection{Singular Value Decomposition and Eckart--Young Theorem}
\label{subsec:svd}
Let $T\in\B_0(\mathcal{H},\mathcal{K})$. We define $\mathcal{I}\coloneqq\{1,\ldots,\dim{\ran{T}}\}$. Thus, if $T$ has infinite-dimensional range, then $\mathcal{I}$ contains infinitely many elements. If $T$ has finite-dimensional range, then $\mathcal{I}$ contains $\rank{T}$ elements. Then there exists a nonincreasing sequence $(\sigma_i)_{i\in\mathcal{I}}\subset[0,\infty)$, an orthonormal system $(e_i)_{i\in\mathcal{I}}$ of $\mathcal{H}$ and an orthonormal system $(f_i)_{i\in\mathcal{I}}$ of $\mathcal{K}$ such that 
\begin{align*}
	T = \sum_{i\in\mathcal{I}}^{}\sigma_i f_i\otimes e_i,\quad\text{i.e. } Th = \sum_{i\in\mathcal{I}}^{}\sigma_i \langle h,e_i\rangle f_i,\quad h\in\mathcal{H},
\end{align*}
see e.g. \cite[Section 4.3]{hsing_theoretical_2015}.
We call such a series expansion of $T$ an SVD of $T$. The $i$-th singular value $\sigma_i$ is also denoted by $\sigma_i(T)$ to express its dependence on $T$. The summation can be finite or countably infinite, depending on whether $T$ is finite-rank or not. For any $r\in\N$ we can truncate the sum
\begin{align}
	(T)_r \coloneqq \sum_{i=1}^{r}\sigma_i f_i\otimes e_i.
	\label{eqn:definition_svd}
\end{align}
This truncation is uniquely defined if and only if one of the following conditions holds: $r\geq\dim{\ran{T}}$  or $\sigma_r>\sigma_{r+1}$. 
The first condition expresses the scenario in which $T$ is a finite-rank operator with rank at most $r$, so that $(T)_r=T$. The second condition, $\sigma_r>\sigma_{r+1}$, implies that $T$ is not finite-rank or that $T$ is finite-rank but its rank is at least $r$. Both conditions hold simultaneously if and only if $\rank{T}=r$.

The following result is known as the Eckart--Young Theorem in the case of finite-dimensional matrices. See for example, \cite[Theorem 4.4.7]{hsing_theoretical_2015}.
\begin{proposition}[Eckart--Young Theorem]
	Let $T\in L_2(\mathcal{H},\mathcal{K})$. Then $(T)_r$ solves the problem,
	\begin{align*}
		\min\{\norm{T-S}_{L_2}:\ S\in\B_{00,r}(\mathcal{H},\mathcal{K})\}.
	\end{align*}
	\label{prop:eckart_young}
\end{proposition}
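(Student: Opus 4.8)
The plan is to use that $L_2(\mathcal{H},\mathcal{K})$ is itself a Hilbert space under the trace inner product $\langle\cdot,\cdot\rangle_{HS}$ and to convert the rank-constrained minimisation into a spectral maximisation driven by the SVD of $T$. Since $T$ is Hilbert--Schmidt it is compact, so by \Cref{subsec:svd} it admits an SVD $T=\sum_{i\in\mathcal{I}}\sigma_i f_i\otimes e_i$ with $(\sigma_i)_{i\in\mathcal{I}}$ nonincreasing and $\norm{T}_{HS}^2=\sum_{i\in\mathcal{I}}\sigma_i^2<\infty$; this is the object I would manipulate throughout, and the goal is to show the minimal value equals $\sum_{i>r}\sigma_i^2$ with minimiser $(T)_r$.

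First I would reduce from arbitrary rank-$\leq r$ operators to orthogonal projections. Given $S\in\B_{00,r}(\mathcal{H},\mathcal{K})$, its range is finite-dimensional, hence closed; let $P$ be the orthogonal projection of $\mathcal{K}$ onto $\ran{S}$, so $\rank{P}\leq r$ and $PS=S$. Because $P$ is self-adjoint, $P(T-S)$ and $(I-P)(T-S)$ are orthogonal in $\langle\cdot,\cdot\rangle_{HS}$, since $\langle PA,(I-P)B\rangle_{HS}=\tr{B^*(I-P)PA}=0$. Pythagoras then gives $\norm{T-S}_{HS}^2=\norm{PT-S}_{HS}^2+\norm{(I-P)T}_{HS}^2\geq\norm{(I-P)T}_{HS}^2$, with equality attained by the admissible choice $S=PT$ (which still has rank $\leq r$, as $(I-P)S=0$). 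Hence it suffices to minimise $\norm{(I-P)T}_{HS}^2=\norm{T}_{HS}^2-\tr{P\,TT^*}$ over orthogonal projections $P$ of rank $\leq r$, i.e.\ to maximise $\tr{P\,TT^*}$.

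Next I would identify the maximiser spectrally. The operator $TT^*=\sum_{i\in\mathcal{I}}\sigma_i^2 f_i\otimes f_i$ is positive, self-adjoint and trace-class, with eigenvalues $\sigma_i^2$ and eigenvectors $f_i$. Writing $P=\sum_{j=1}^{k}u_j\otimes u_j$ for an orthonormal basis $(u_j)$ of $\ran{P}$, with $k\leq r$, one computes $\tr{P\,TT^*}=\sum_{i\in\mathcal{I}}\sigma_i^2 w_i$, where $w_i\coloneqq\sum_{j=1}^{k}\abs{\langle u_j,f_i\rangle}^2$ satisfies $0\leq w_i\leq 1$ (as $w_i=\norm{Pf_i}^2$) and $\sum_{i}w_i\leq k\leq r$ by Bessel's inequality. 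Since $(\sigma_i^2)_i$ is nonincreasing and nonnegative, distributing total weight at most $r$ against it yields $\tr{P\,TT^*}=\sum_i\sigma_i^2 w_i\leq\sum_{i=1}^{r}\sigma_i^2$, with equality when $P$ projects onto the span of $f_1,\dots,f_r$. This weighted-rearrangement bound is a Ky Fan--type maximum principle, and making it fully rigorous in the trace-class setting --- in particular justifying the interchange of summations and treating the equality case under repeated singular values --- is the main obstacle; the remaining arguments are routine bookkeeping with the SVD.

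Finally, combining the two reductions, the minimal value of $\norm{T-S}_{HS}^2$ is $\norm{T}_{HS}^2-\sum_{i=1}^{r}\sigma_i^2=\sum_{i>r}\sigma_i^2$, attained by $S=PT$ for $P$ the projection onto the span of $f_1,\dots,f_r$; since $PT=\sum_{i=1}^{r}\sigma_i f_i\otimes e_i=(T)_r$ by \eqref{eqn:definition_svd}, the minimiser is exactly $(T)_r$, and one checks directly that $\norm{T-(T)_r}_{HS}^2=\sum_{i>r}\sigma_i^2$, confirming the infimum is attained. A shorter alternative would invoke the Weyl inequality $\sigma_{m+r}(T)\leq\sigma_m(T-S)$, valid whenever $\rank{S}\leq r$, and sum the squares to obtain $\norm{T-S}_{HS}^2\geq\sum_{i>r}\sigma_i^2$ at once; this is quicker but offloads the work onto a singular-value perturbation inequality not recorded in the excerpt, so I would favour the self-contained projection argument above.
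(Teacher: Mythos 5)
The paper does not prove this proposition at all: it records it as a known result and cites \cite[Theorem 4.4.7]{hsing_theoretical_2015}, so there is no in-paper argument to compare against. Your proof is a correct, self-contained version of the standard projection argument, and every step checks out: the reduction $\norm{T-S}_{HS}^2=\norm{PT-S}_{HS}^2+\norm{(I-P)T}_{HS}^2$ via HS-orthogonality of $P(\cdot)$ and $(I-P)(\cdot)$ is valid, the identity $\norm{(I-P)T}_{HS}^2=\norm{T}_{HS}^2-\tr{PTT^*}$ uses only cyclicity of the trace for the trace-class operator $TT^*$, and the spectral maximisation is sound. I would only push back on your assessment of where the difficulty lies. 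The interchange of summations in $\tr{PTT^*}=\sum_i\sigma_i^2 w_i$ is immediate by Tonelli since all terms are nonnegative, and the bound $\sum_i\sigma_i^2 w_i\leq\sum_{i\leq r}\sigma_i^2$ under $0\leq w_i\leq 1$, $\sum_i w_i\leq r$ is elementary: $\sum_{i>r}\sigma_i^2w_i\leq\sigma_r^2\sum_{i>r}w_i\leq\sigma_r^2\sum_{i\leq r}(1-w_i)$, and then $\sigma_i^2w_i+\sigma_r^2(1-w_i)\leq\sigma_i^2$ for $i\leq r$. No Ky Fan machinery is needed. Likewise, the ``equality case under repeated singular values'' is a non-issue here, because the proposition only asserts that $(T)_r$ \emph{is} a minimiser (for whichever truncated SVD one picks), not that minimisers are unique; exhibiting the projection onto $\operatorname{span}\{f_1,\dots,f_r\}$ as one maximiser of $\tr{PTT^*}$ suffices. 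One cosmetic slip: the reason $PT$ has rank at most $r$ is that its range lies in $\ran{P}$, not that $(I-P)S=0$. With those remarks, the argument is complete and rigorous as written.
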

The SVD of self-adjoint compact operators reduces to $T=\sum_{i\in\mathcal{I}}^{}\lambda_i e_i\otimes e_i$ with real eigenvalues $\lambda_i\rightarrow 0$ and orthonormal basis $(e_i)_i$, c.f.\ the spectral theorem for self-adjoint and compact operators \cite[Theorem II.5.1]{conway_course_2007}.
The Hilbert--Schmidt norm can now be calculated in the following way.
\begin{proposition}
	Let $T\in L_2(\mathcal{H},\mathcal{K})$, then $\norm{T}_{L_2}^2 = \sum_{i}^{}\sigma_i(T^*T)=\sum_{i}^{}\sigma_i(TT^*)$.
	\label{prop:hs_calc_from_svd}
\end{proposition}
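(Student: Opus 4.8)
The plan is to compute all three quantities directly from the singular value decomposition of $T$ recalled in \Cref{subsec:svd}. Write $T = \sum_{i\in\mathcal{I}}\sigma_i(T)\, f_i\otimes e_i$ with orthonormal systems $(e_i)_{i\in\mathcal{I}}$ in $\mathcal{H}$ and $(f_i)_{i\in\mathcal{I}}$ in $\mathcal{K}$ and nonincreasing singular values $\sigma_i(T)$, recalling that the tensor product $f_i\otimes e_i$ acts as $h\mapsto\langle h,e_i\rangle f_i$.

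First I would evaluate the Hilbert--Schmidt norm. Since $(e_i)_{i\in\mathcal{I}}$ is an orthonormal basis of $\overline{\ran{T^*}}=\ker{T}^\perp$ by \Cref{prop:ker_ran_relation}, I can extend it to an ONB of $\mathcal{H}$ by adjoining an orthonormal basis of $\ker{T}$. Evaluating $\norm{\cdot}_{HS}$ with this ONB, the basis vectors in $\ker{T}$ contribute nothing, while for $i\in\mathcal{I}$ the form of the SVD gives $Te_i=\sigma_i(T)f_i$, so that $\norm{Te_i}^2=\sigma_i(T)^2$ by orthonormality of $(f_i)$. Hence $\norm{T}_{HS}^2=\sum_{i\in\mathcal{I}}\sigma_i(T)^2$.

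Next I would identify the singular values of $T^*T$ and $TT^*$. Taking adjoints in the SVD yields $T^*=\sum_{i\in\mathcal{I}}\sigma_i(T)\, e_i\otimes f_i$, and composing, the orthonormality of $(e_i)$ and $(f_i)$ collapses the resulting double sums to $T^*T=\sum_{i\in\mathcal{I}}\sigma_i(T)^2\, e_i\otimes e_i$ and $TT^*=\sum_{i\in\mathcal{I}}\sigma_i(T)^2\, f_i\otimes f_i$. These are exactly the spectral decompositions of the positive self-adjoint operators $T^*T$ and $TT^*$, and since for a positive self-adjoint operator the singular values coincide with the eigenvalues (as noted for self-adjoint operators in \Cref{subsec:svd}), I read off $\sigma_i(T^*T)=\sigma_i(TT^*)=\sigma_i(T)^2$ for every $i\in\mathcal{I}$.

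Combining the two computations gives $\norm{T}_{HS}^2=\sum_{i\in\mathcal{I}}\sigma_i(T)^2=\sum_i\sigma_i(T^*T)=\sum_i\sigma_i(TT^*)$, as claimed. There is no serious obstacle here; the only point requiring care is the bookkeeping of the index set $\mathcal{I}$ together with the extension of $(e_i)$ to a full orthonormal basis of $\mathcal{H}$, so that the kernel directions are correctly seen to contribute zero to the Hilbert--Schmidt norm.
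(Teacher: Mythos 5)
Your proof is correct and rests on the same spectral-theoretic computation as the paper's: the paper diagonalises $T^*T$ directly and evaluates $\norm{T}_{HS}^2=\tr{T^*T}$ in the resulting eigenbasis, whereas you start from the SVD of $T$, obtain $\norm{T}_{HS}^2=\sum_{i}\sigma_i(T)^2$, and then identify $\sigma_i(T^*T)=\sigma_i(TT^*)=\sigma_i(T)^2$ by composing the SVD with its adjoint. The two routes are equivalent, and your extra care in extending $(e_i)_{i\in\mathcal{I}}$ to a full orthonormal basis of $\mathcal{H}$ so that the kernel directions contribute zero is exactly the bookkeeping the paper leaves implicit.
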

\begin{proof}
	We have $T^*T=\sum_{i}^{}\sigma_i(T^*T)e_i\otimes e_i$, with ONB $(e_i)_i$ of eigenvectors and real eigenvalues $(\sigma(T^*T)_i)_i$. Hence, $\norm{T}_{L_2}^2=\tr{T^*T}=\sum_{i.j}^{}\langle T^*Te_i,e_j\rangle=\sum_{i}^{}\sigma_i(T^*T)$. The proof for $TT^*$ is analogous.
\end{proof}

\section{Proofs of results}
\label{sec:proofs}
\begin{lemma}
	Let $\mathcal{H},\mathcal{K}$ be normed spaces and $T:\dom{T}\subset\mathcal{H}\rightarrow \mathcal{K}$. Suppose that $r\coloneqq\dim{\ran{T}}<\infty$. Then $T$ is continuous if and only if $\ker{T}$ is closed in $\dom{T}$.
	\label{lemma:closed_kernel_and_finite_dim_range}
\end{lemma}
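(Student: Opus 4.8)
The plan is to prove the two implications separately, the forward one being immediate and the reverse one resting on a quotient-space argument.

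First, the forward direction. If $T$ is continuous, then since $\{0\}$ is closed in $\mathcal{K}$ and $\ker{T} = T^{-1}(\{0\})$, the kernel is the preimage of a closed set under a continuous map, hence closed in $\dom{T}$.

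For the reverse direction, suppose $\ker{T}$ is closed in $\dom{T}$. I would pass to the quotient space $V \coloneqq \dom{T}/\ker{T}$, equipped with the quotient norm $\norm{[h]} = \inf\{\norm{h-c}:\ c \in \ker{T}\}$. The closedness of $\ker{T}$ is exactly what is needed for this quantity to be a genuine norm rather than merely a seminorm: if $\norm{[h]} = 0$, then $h$ lies in $\overline{\ker{T}} = \ker{T}$, so $[h] = 0$. The quotient map $Q : \dom{T} \to V$ is then linear and continuous (indeed $\norm{Q} \leq 1$). Since $T$ vanishes on $\ker{T}$, it factors as $T = \tilde{T} \circ Q$ for a well-defined linear map $\tilde{T} : V \to \mathcal{K}$, $[h] \mapsto Th$, which is injective with range equal to $\ran{T}$.

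The key observation is now dimensional: because $\tilde{T}$ is a linear bijection of $V$ onto $\ran{T}$ and $\dim{\ran{T}} = r < \infty$, the quotient $V$ is finite-dimensional with $\dim{V} = r$. On a finite-dimensional normed space all norms are equivalent, so every linear map out of $V$ is automatically continuous; in particular $\tilde{T}$ is continuous. Consequently $T = \tilde{T} \circ Q$ is continuous as a composition of continuous maps, which completes the reverse implication. The only genuinely delicate step is ensuring that the quotient norm is a norm, which is precisely where the hypothesis that $\ker{T}$ is closed enters; the remaining ingredients—continuity of the quotient map and the automatic continuity of linear maps on finite-dimensional normed spaces—are standard. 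I would remark that this argument generalises, to arbitrary finite $r$, the classical fact (the case $r=1$) that a linear functional is continuous if and only if its kernel is closed.
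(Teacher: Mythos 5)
Your argument is correct and follows essentially the same route as the paper's proof: the forward direction via the preimage of the closed set $\{0\}$, and the reverse direction by factoring $T=\tilde{T}\circ Q$ through the quotient $\dom{T}/\ker{T}$, noting that injectivity of $\tilde{T}$ forces the quotient to have dimension $r<\infty$ so that $\tilde{T}$ is automatically continuous. Your additional remark pinpointing where closedness of the kernel is used (to make the quotient seminorm a norm) is a helpful clarification but does not change the substance of the argument.
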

\begin{proof}
	If $T$ is continuous, then $\ker{T}=T^{-1}(\{0\})$ is closed as $\{0\}$ is closed. For the converse, note that $\ker{T}\subset\dom{T}$ closed implies that $\dom{T}/\ker{T}$ is a normed space and the quotient map $Q:\dom{T}\rightarrow\dom{T}/\ker{T}$, $h\mapsto h+\ker{T}$ is continuous, see \cite[Section III.4]{conway_course_2007}. Let us define $S:\dom{T}/\ker{T}\rightarrow\mathcal{K}$, $S(h+\ker{T})=Th$. Then $S$ is a well-defined and linear map satisfying $T=S\circ Q$. Thus $\ran{S}=\ran{T}$, and if $S(h+\ker{T})=0$, then $Th=0$, so that $h\in\ker{T}$ and $S$ is injective. This implies that $\dim{\dom{T}/\ker{T}}=r$. Thus $S$ is continuous as a linear map defined on a finite-dimensional space, c.f.\ \cite[Proposition III.3.4]{conway_course_2007}. We conclude that $T$ is continuous as a composition of continuous maps.
\end{proof}

\covlemma*
\begin{proof}
	Let $(e_i)_i$ be an ONB of $E_1$. Applying the Cauchy-Schwarz inequality twice, we get
	\begin{align*}
		\mathbb{E}\sum_{i}^{}\langle w,e_i\rangle\langle e_i,z\rangle = \mathbb{E}\langle w,z\rangle \leq \mathbb{E}\norm{z}\norm{w}\leq (\mathbb{E}\norm{z}^2)^{1/2}(\mathbb{E}\norm{w}^2)^{1/2}<\infty.
	\end{align*}
	Hence by Fubini's Theorem,
	\begin{align*}
		\tr{C_{wz}}=\sum_{i}^{}\langle C_{wz}e_i,e_i\rangle = \sum_{i}^{}\mathbb{E}\langle w,e_i\rangle \langle e_i,z\rangle =\mathbb{E}\sum_{i}^{}\langle w,e_i\rangle\langle e_i,z\rangle = \mathbb{E}\langle z,w\rangle.
	\end{align*}
	This proves \cref{item:covariance_trace}. For \cref{item:covariance_of_linear_transform}, we note that for all $e\in E_1$ and $f\in E_2$,
	\begin{align*}
		\langle C_{Qw,z}e,f\rangle = \mathbb{E}\langle f,Qw\rangle\langle e,z\rangle=\mathbb{E}\langle Q^*f,w\rangle\langle e,z\rangle=\langle C_{wz}e,Q^*f\rangle=\langle QC_{wz}e,f\rangle.
	\end{align*}
	The statement of \cref{item:cross_covariance_representation} is proven in \cite[Theorem 1]{baker_joint_1973} in the case of centered covariances. However, as noted in \cite[Proof of Theorem 5.4]{klebanov_rigorous_2020}, the proof also works for uncentered covariances, showing that \cref{item:cross_covariance_representation} holds.  
\end{proof}

\bibliographystyle{plain}
\bibliography{references}

@article{baker_joint_1973,
   AUTHOR = {Baker, Charles R.},
     TITLE = {Joint measures and cross-covariance operators},
   JOURNAL = {Trans. Amer. Math. Soc.},
  FJOURNAL = {Transactions of the American Mathematical Society},
    VOLUME = {186},
      YEAR = {1973},
     PAGES = {273--289},
       DOI = {10.2307/1996566},
}

@inproceedings{bakshi2020robust,
  author       = {Ainesh Bakshi and Nadiia Chepurko and David P. Woodruff},
  editor       = {Sandy Irani},
  title        = {Robust and Sample Optimal Algorithms for {PSD} Low Rank Approximation},
  booktitle    = {61st {IEEE} Annual Symposium on Foundations of Computer Science, {FOCS} 2020, Durham, NC, USA, November 16-19, 2020},
  pages        = {506--516},
  publisher    = {{IEEE}},
  year         = {2020},
  url          = {https://doi.org/10.1109/FOCS46700.2020.00054},
  doi          = {10.1109/FOCS46700.2020.00054},
}

@article {begovic2022hybrid,
    AUTHOR = {Begovi\'{c} Kova\v{c}, Erna},
     TITLE = {Hybrid {CUR}-type decomposition of tensors in the {T}ucker
              format},
   JOURNAL = {BIT},
  FJOURNAL = {BIT. Numerical Mathematics},
    VOLUME = {62},
      YEAR = {2022},
    NUMBER = {1},
     PAGES = {125--138},
       DOI = {10.1007/s10543-021-00876-x},
}

@book{ben-israel_generalized_2003,
    AUTHOR = {Ben-Israel, Adi and Greville, Thomas N. E.},
     TITLE = {Generalized inverses: {Theory and applications}},
    SERIES = {CMS Books in Mathematics},
    VOLUME = {15},
   EDITION = {Second},
 PUBLISHER = {Springer-Verlag, New York},
      YEAR = {2003},
     PAGES = {xvi+420},
       doi = {10.1007/b97366},
}

@book{bogachev_gaussian_1998,
    AUTHOR = {Bogachev, Vladimir I.},
     TITLE = {Gaussian measures},
    SERIES = {Mathematical Surveys and Monographs},
    VOLUME = {62},
 PUBLISHER = {American Mathematical Society, Providence, RI},
      YEAR = {1998},
     PAGES = {xii+433},
       DOI = {10.1090/surv/062},
}

@article{bousserez2018optimal,
  title={Optimal and scalable methods to approximate the solutions of large-scale {Bayesian} problems: theory and application to atmospheric inversion and data assimilation},
  author={Bousserez, Nicolas and Henze, Daven K},
  fjournal={Quarterly Journal of the Royal Meteorological Society},
  journal={Quart. J. Roy. Meteorolog. Soc.},
  volume={144},
  number={711},
  pages={365--390},
  year={2018},
  doi = {10.1002/qj.3209}
}

@inproceedings{boutsidis2016optimal,
  title={Optimal principal component analysis in distributed and streaming models},
  author={Boutsidis, Christos and Woodruff, David P and Zhong, Peilin},
  booktitle    = {Proceedings of the 48th Annual {ACM} {SIGACT} Symposium on Theory of Computing, {STOC} 2016, Cambridge, MA, USA, June 18-21, 2016},
  pages={236--249},
  year={2016},
  editor       = {Daniel Wichs and Yishay Mansour},
  doi          = {10.1145/2897518.2897646},
}

@article{chavarria2022effective,
  title={Effective implementation to reduce execution time of a low-rank matrix approximation problem},
  author={Chavarr{\'\i}a-Molina, Jeffry and Fallas-Monge, Juan Jos{\'e} and Soto-Quiros, Pablo},
  journal={J. Comput. Appl. Math.},
  fjournal={Journal of Computational and Applied Mathematics},
  volume={401},
  pages={113763},
  year={2022},
  doi = {10.1016/j.cam.2021.113763},
}

@inproceedings{chen2018visual,
  title={Visual-{SLIM}: Integrated sparse linear model with visual features for personalized recommendation},
  author={Chen, Siyang and Xue, Feng and Zhang, Haobo},
  booktitle    = {Advances in Multimedia Information Processing - {PCM} 2018 - 19th Pacific-Rim Conference on Multimedia, Hefei, China, September 21-22, 2018, Proceedings, Part {I}},
  pages={126--135},
  year={2018},
  editor       = {Richang Hong and Wen{-}Huang Cheng and Toshihiko Yamasaki and Meng Wang and Chong{-}Wah Ngo},
  series       = {Lecture Notes in Computer Science},
  volume       = {11164},
  publisher    = {Springer},
  doi          = {10.1007/978-3-030-00776-8\_12},  
}

@article{chung2015optimal,
  title={Optimal regularized low rank inverse approximation},
  author={Chung, Julianne and Chung, Matthias and O'Leary, Dianne P},
  journal = {Linear Algebra Appl.}, 
  fjournal={Linear Algebra and its Applications},
  volume={468},
  pages={260--269},
  year={2015},
  doi = {10.1016/j.laa.2014.07.024},
}

@article{chung2014efficient,
  title={An efficient approach for computing optimal low-rank regularized inverse matrices},
  author={Chung, Julianne and Chung, Matthias},
  journal={Inverse Problems},
  volume={30},
  number={11},
  pages={114009},
  year={2014},
  doi={10.1088/0266-5611/30/11/114009},
}

@inproceedings{chung2013computing,
  title={Computing optimal low-rank matrix approximations for image processing},
  author={Chung, Julianne and Chung, Matthias},
  booktitle={2013 Asilomar Conference on Signals, Systems and Computers},
  pages={670--674},
  year={2013},
  editor       = {Michael B. Matthews},
  booktitle    = {2013 Asilomar Conference on Signals, Systems and Computers, Pacific Grove, CA, USA, November 3-6, 2013},
  pages        = {670--674},
  publisher    = {{IEEE}},
  year         = {2013},
  doi          = {10.1109/ACSSC.2013.6810366},
}

@article{chung2017optimal,
  title={Optimal regularized inverse matrices for inverse problems},
  author={Chung, Julianne and Chung, Matthias},
  journal = {SIAM J. Matrix Anal. Appl.},
  fjournal={SIAM Journal on Matrix Analysis and Applications},
  volume={38},
  number={2},
  pages={458--477},
  year={2017},
   doi = {10.1137/16M1066531},
}

@book{cohn_measure_2013,
    AUTHOR = {Cohn, Donald L.},
     TITLE = {Measure theory},
    SERIES = {Birkh\"auser Advanced Texts},
   EDITION = {Second},
 PUBLISHER = {Birkh\"auser/Springer, New York},
      YEAR = {2013},
     PAGES = {xxi+457},
       DOI = {10.1007/978-1-4614-6956-8},
}

@book{conway_course_2000,
    AUTHOR = {Conway, John B.},
     TITLE = {A course in operator theory},
    SERIES = {Graduate Studies in Mathematics},
    VOLUME = {21},
 PUBLISHER = {American Mathematical Society, Providence, RI},
      YEAR = {2000},
     PAGES = {xvi+372},
       DOI = {10.1090/gsm/021},
}

@book{conway_course_2007,
    AUTHOR = {Conway, John B.},
     TITLE = {A course in functional analysis},
    SERIES = {Graduate Texts in Mathematics},
    VOLUME = {96},
   EDITION = {Second},
 PUBLISHER = {Springer-Verlag, New York},
      YEAR = {1990},
     PAGES = {xvi+400},
       DOI = {10.1007/978-1-4757-4383-8},
}

@article{de_hoop_convergence_2023,
	title = {Convergence Rates for Learning Linear Operators from Noisy Data},
    journal = {SIAM/ASA J. Uncertainty Quantif.},
    fjournal = {SIAM/ASA Journal on Uncertainty Quantification},
    volume = {11},
    number = {2},
    pages = {480-513},
    year = {2023},
    doi = {10.1137/21M1442942},
	author = {de Hoop, Maarten V. and Kovachki, Nikola B. and Nelsen, Nicholas H. and Stuart, Andrew M.},
}

@book{engl_regularization_1996,
    AUTHOR = {Engl, Heinz W. and Hanke, Martin and Neubauer, Andreas},
     TITLE = {Regularization of inverse problems},
    SERIES = {Mathematics and its Applications},
    VOLUME = {375},
 PUBLISHER = {Springer Dordrecht},
      YEAR = {1996},
     PAGES = {viii+321},
       url = {https://link.springer.com/book/9780792341574},
    EDITION = {First}
}

@book{friedland2016matrices,
  title = {Matrices---algebra, analysis and applications},
  author={Friedland, Shmuel},
  publisher={World Scientific Publishing},
  address={New Jersey},
  year={2015},
  pages = {xii+582},
  doi = {10.1142/9567},
}

@article{friedland_generalized_2007,
	title = {Generalized {Rank}-{Constrained} {Matrix} {Approximations}},
	volume = {29},
	doi = {10.1137/06065551},
	number = {2},
	urldate = {2023-10-20},
	fjournal = {SIAM Journal on Matrix Analysis and Applications},
	journal = {SIAM J. Matrix Anal. Appl.},
	author = {Friedland, Shmuel and Torokhti, Anatoli},
	year = {2007},
	pages = {656--659},	
}

@article{grant2014efficient,
  title={Efficient Compression of Distributed Information in Estimation Fusion},
  author={Grant, Alex and Torokhti, Anatoli and Miklavcic, Stan},
  fjournal={Electronic Notes in Discrete Mathematics},
  journal={Electron. Notes Discrete Math.},
  doi={10.1016/j.endm.2014.08.039},
  volume={46},
  pages={297--304},
  year={2014},
}

@phdthesis{holodnak2015topics,
	address = {Raleigh, North Carolina},
	title = {Topics in {Randomized} {Algorithms} for {Numerical} {Linear} {Algebra}},
	school = {North Carolina State University},
	author = {Holodnak, John T.},
	year = {2015},
	url = {https://repository.lib.ncsu.edu/bitstream/handle/1840.16/10327/etd.pdf}
}

@article{howlett2022multilinear,
  title={Multilinear {Karhunen}-{Lo{\`e}ve} Transforms},
  author={Howlett, Phil and Torokhti, Anatoli and Pudney, Peter and Soto-Quiros, Pablo},
  journal      = {{IEEE} Trans. Signal Process.},
  fjournal={IEEE Transactions on Signal Processing},
  volume={70},
  pages={5148--5163},
  year={2022},
  doi          = {10.1109/TSP.2022.3214684},
}

@book{hsing_theoretical_2015,
	series = {Wiley {Series} in {Probability} and {Statistics}},
	title = {Theoretical {Foundations} of {Functional} {Data} {Analysis}, with an {Introduction} to {Linear} {Operators}},
	publisher = {John Wiley \& Sons, Ltd., Chichester},
	author = {Hsing, Tailen and Eubank, Randall},
	year = {2015},
	doi = {10.1002/9781118762547},
    pages = {xiv+334},
}

@article{huang_approximation_2006,
	title = {Approximation theorems of {Moore}–{Penrose} inverse by outer inverses},
	volume = {15},
	journal = {Numer. Math. J. Chinese Univ. (Engl. Ser.)},
  fjournal = {Numerical Mathematics. A Journal of Chinese Universities.
              English Series},
	author = {Huang, Qianglian and Fang, Zheng},
	year = {2006},
    number = {2},
     pages = {113--119},
}

@book{hytonen_analysis_2016,
     TITLE = {Analysis in {B}anach spaces. {V}ol. {I}. {M}artingales and
              {L}ittlewood-{P}aley theory},
	AUTHOR = {Hytönen, Tuomas and Van Neerven, Jan and Veraar, Mark and Weis, Lutz},
	YEAR = {2016},
	DOI = {10.1007/978-3-319-48520-1},
    SERIES = {Series of Modern Surveys in Mathematics},
    VOLUME = {63},
 PUBLISHER = {Springer, Cham},
      YEAR = {2016},
     PAGES = {xvi+614},
}

@article{izenman_reduced-rank_1975,
	title = {Reduced-rank regression for the multivariate linear model},
	volume = {5},
	doi = {10.1016/0047-259X(75)90042-1},
	number = {2},
	journal = {J. Multivariate Anal.},
	fjournal = {Journal of Multivariate Analysis},
	author = {Izenman, Alan Julian},
	year = {1975},
	pages = {248--264},
}

@inproceedings{kacham2021reduced,
  title={Reduced-Rank Regression with Operator Norm Error},
  author={Kacham, Praneeth and Woodruff, David},
  booktitle    = {Conference on Learning Theory, {COLT} 2021, 15-19 August 2021, Boulder, Colorado, {USA}},  
  pages={2679--2716},
  year={2021},
  editor       = {Mikhail Belkin and Samory Kpotufe},
  series       = {Proceedings of Machine Learning Research},
  volume       = {134},
  publisher    = {{PMLR}},
  url          = {http://proceedings.mlr.press/v134/kacham21a.html},
}

@article{klebanov_rigorous_2020,
	title = {A {Rigorous} {Theory} of {Conditional} {Mean} {Embeddings}},
	volume = {2},
	doi = {10.1137/19M1305069},
	number = {3},
	fjournal = {SIAM Journal on Mathematics of Data Science},
	journal = {SIAM J. Math. Data Science},
    author = {Klebanov, Ilja and Schuster, Ingmar and Sullivan, T. J.},
	year = {2020},
	pages = {583--606},
}

@article{kovachki_neural_2024,
	title = {Neural operator: learning maps between function spaces with applications to {PDEs}},
	volume = {24},
	number  = {89},
	fjournal = {Journal of Machine Learning Research},
	journal = {J. Mach. Learn. Res.},
	author = {Kovachki, Nikola and Li, Zongyi and Liu, Burigede and Azizzadenesheli, Kamyar and Bhattacharya, Kaushik and Stuart, Andrew and Anandkumar, Anima},
	year = {2023},
	keywords = {operator learning},
	pages   = {1--97},
    url     = {http://jmlr.org/papers/v24/21-1524.html}
}

@inproceedings{langs2011learning,
  title={Learning an atlas of a cognitive process in its functional geometry},
  author={Langs, Georg and Lashkari, Danial and Sweet, Andrew and Tie, Yanmei and Rigolo, Laura and Golby, Alexandra J and Golland, Polina},
  booktitle={Information Processing in Medical Imaging},
  pages={135--146},
  year={2011},
  publisher={Springer Berlin Heidelberg},
  doi = {10.1007/978-3-642-22092-0_12}
}

@article{langs2014decoupling,
  title={Decoupling function and anatomy in atlases of functional connectivity patterns: {L}anguage mapping in tumor patients},
  author={Langs, Georg and Sweet, Andrew and Lashkari, Danial and Tie, Yanmei and Rigolo, Laura and Golby, Alexandra J and Golland, Polina},
  journal={Neuroimage},
  volume={103},
  pages={462--475},
  year={2014},  
  doi = {10.1016/j.neuroimage.2014.08.029},
}

@inproceedings{larsson2015simple,
  title={A Simple Method for Subspace Estimation with Corrupted Columns},
  author={Larsson, Viktor and Olsson, Carl and Kahl, Fredrik},
  booktitle={2015 IEEE International Conference on Computer Vision Workshop (ICCVW)}, 
  pages={841-849},
  year={2015},
  doi={10.1109/ICCVW.2015.113}
}

@article{li2023generalized,
author = {Li, Zihao and Lim, Lek-Heng},
title = {Generalized Matrix Nearness Problems},
journal = {SIAM J. Matrix Anal. Appl.},
fjournal = {SIAM Journal on Matrix Analysis and Applications},
volume = {44},
number = {4},
pages = {1709-1730},
year = {2023},
doi = {10.1137/22M1526034},
}

@inproceedings{mineiro2015fast,
  title={Fast label embeddings via randomized linear algebra},
  author={Mineiro, Paul and Karampatziakis, Nikos},
  booktitle={Machine Learning and Knowledge Discovery in Databases},
  pages={37--51},
  year={2015},
  editor={Appice, Annalisa and Rodrigues, Pedro Pereira and Santos Costa, V{\'i}tor and Soares, Carlos and Gama, Jo{\~a}o and Jorge, Al{\'i}pio},
 publisher={Springer, Cham},
  doi = {10.1007/978-3-319-23528-8_3}
}

@unpublished{mollenhauer_learning_2023,
    title = {Learning linear operators: {Infinite}-dimensional regression as a well-behaved non-compact inverse problem},
    year={2024},
    doi = {10.48550/arXiv.2211.08875},
    author = {Mollenhauer, Mattes and M\"{u}cke, Nicole and Sullivan, T. J.},
    note={arXiv:2211.08875}
}

@article{saibaba2016hoid,
  title={{HOID}: higher order interpolatory decomposition for tensors based on {Tucker} representation},
  author={Saibaba, Arvind K},
  fjournal={SIAM Journal on Matrix Analysis and Applications},
  journal={SIAM J. Matrix Anal. Appl.},
  volume={37},
  number={3},
  pages={1223--1249},
  year={2016},
  volume = {37},
    doi = {10.1137/15M1048628},
}

@article{sondermann1986best,
  title={Best approximate solutions to matrix equations under rank restrictions},
  author={Sondermann, Dieter},
  journal={Statistische Hefte},
  volume={27},
  number={1},
  pages={57--66},
  year={1986},
  doi={10.1007/BF02932555},
}

@inproceedings{song2017low,
  title        = {Low rank approximation with entrywise \(\ell{}_{\mbox{1}}\)-norm error},
  editor       = {Hamed Hatami and Pierre McKenzie and Valerie King},
  author={Song, Zhao and Woodruff, David P and Zhong, Peilin},
  booktitle    = {Proceedings of the 49th Annual {ACM} {SIGACT} Symposium on Theory of Computing, {STOC} 2017, Montreal, QC, Canada, June 19-23, 2017},
  publisher    = {{ACM}},
  pages={688--701},
  doi          = {10.1145/3055399.3055431},
  year={2017}
}

@article{soto2021error,
  title={Error analysis of the generalized low-rank matrix approximation},
  author={Soto-Quiros, Pablo},
  fjournal={Electronic Journal of Linear Algebra},
  journal={Electron. J. Linear Algebra},
  volume={37},
  pages={544--548},
  year={2021},
  doi={10.13001/ela.2021.5961}
}

@article{soto2023fast,
  title={Fast multiple rank-constrained matrix approximation},
  author={Soto-Quiros, Pablo and Chavarr{\'\i}a-Molina, Jeffry and Fallas-Monge, Juan Jos{\'e} and Torokhti, Anatoli},
  journal={SeMA Journal},
  pages={1--23},
  year={2023},
  doi={10.1007/s40324-023-00340-6},
}

@article{soto2016generalized,
   title={Generalized {Brillinger}-like transforms},
  author={Torokhti, Anatoli and Soto-Quiros, Pablo},
  journal={IEEE Signal Process. Lett.},
  fjournal={IEEE Signal Processing Letters},
  volume={23},
  number={6},
  pages={843--847},
  year={2016},
  doi={10.1109/LSP.2016.2556714}
}

@article{soto2022regularized,
  title={A Regularized Alternating Least-Squares Method for Minimizing a Sum of Squared {Euclidean} Norms with Rank Constraint},
  author={Soto-Quiros, Pablo and others},
  fjournal={Journal of Applied Mathematics},
  journal={J. Appl. Math.},
  pages={4838182},
  volume={2022},
  number={1},
  year={2022},
  doi = {10.1155/2022/4838182},
}

@article{soto2017optimal,
  title={Optimal transforms of random vectors: The case of successive optimizations},
  author={Soto-Quiros, Pablo and Torokhti, Anatoli},
  fjournal={Signal Processing},
  journal={Signal Process.},
  volume={132},
  pages={183--196},
  year={2017},
  doi={10.1016/j.sigpro.2016.09.020}
}

@article{soto2019improvement,
  title={Improvement in accuracy for dimensionality reduction and reconstruction of noisy signals. {Part} {II}: the case of signal samples},
  author={Soto-Quiros, Pablo and Torokhti, Anatoli},
  fjournal={Signal Processing},
  journal={Signal Process.},
  volume={154},
  pages={272--279},
  year={2019},
    doi = {10.1016/j.sigpro.2018.09.020},
}

@unpublished{soto2021data,
  title={Data Compression: Multi-Term Approach},
  author={Soto-Quiros, Pablo and Torokhti, Anatoli},
  doi={10.48550/arXiv.2111.05775},
  year={2021},
  note={arXiv:2111.05775}
}

@unpublished{soto2021second,
  title={Second Degree Model for Multi-Compression and Recovery of Distributed Signals},
  author={Soto-Quiros, Pablo and Torokhti, Anatoli and Miklavcic, Stanley J},
  doi={10.48550/arXiv.2111.03614},
  year={2021},
  note={arXiv:2111.03614},
}

@article{sou2012generalized,
  title={On generalized matrix approximation problem in the spectral norm},
  author={Sou, Kin Cheong and Rantzer, Anders},
  fjournal={Linear algebra and its applications},
  journal={Linear Algebra Appl.},
  volume={436},
  number={7},
  pages={2331--2341},
  year={2012},
  doi = {10.1016/j.laa.2011.10.009},
}

@article{spantini_optimal_2015,
	title = {Optimal {Low}-rank {Approximations} of {Bayesian} {Linear} {Inverse} {Problems}},
	volume = {37},
    number = {6},
    pages = {A2451-A2487},
    year = {2015},
    doi = {10.1137/140977308},
	fjournal = {SIAM Journal on Scientific Computing},
	journal = {SIAM J. Sci. Comput.},
    publisher={SIAM},
	author = {Spantini, Alessio and Solonen, Antti and Cui, Tiangang and Martin, James and Tenorio, Luis and Marzouk, Youssef},
}

@article{spantini2017goal,
  title={Goal-oriented optimal approximations of {Bayesian} linear inverse problems},
  author={Spantini, Alessio and Cui, Tiangang and Willcox, Karen and Tenorio, Luis and Marzouk, Youssef},
  fjournal={SIAM Journal on Scientific Computing},
  journal = {SIAM J. Sci. Comput.},
  volume={39},
  number={5},
  pages={S167--S196},
  year={2017},
  volume = {39},
  doi = {10.1137/16M1082123},
}

@unpublished{torokhti2021new,
  title={A new technique for compression of data sets},
  author={Torokhti, Anatoli},
  doi={10.48550/arXiv.2111.06572},
  year={2021},
  note={arXiv:2111.06572},
}

@article{torokhti2009towards,
  title={Towards theory of generic principal component analysis},
  author={Torokhti, Anatoli and Friedland, Shmuel},
  fjournal={Journal of Multivariate Analysis},
  journal={J. Multivariate Anal.},
  volume={100},
  number={4},
  pages={661--669},
  year={2009},
  doi={10.1016/j.jmva.2008.07.005},
}

@inproceedings{torokhti2007towards,
  title={Towards generic theory of data compression},
  author={Torokhti, Anatoli and Friedland, Shmuel and Howlett, Phil},
  booktitle={2007 IEEE International Symposium on Information Theory},
  pages={291--295},
  year={2007},
  doi={10.1109/ISIT.2007.4557241}
}

@book{torokhti2007computational,
  title={Computational Methods for Modelling of Nonlinear Systems},
  author={Torokhti, Anatoli and Howlett, Phil},
  year={2007},
  address={Amsterdam},
  publisher={Elsevier},
  series = {Mathematics in Science and Engineering},
  volume = {212},
  pages = {xii+397},
  url={https://www.sciencedirect.com/bookseries/mathematics-in-science-and-engineering/vol/212/suppl/C}
}

@article{torokhti2009filtering,
  title={Filtering and compression for infinite sets of stochastic signals},
  author={Torokhti, Anatoli and Howlett, Phil},
  fjournal={Signal Processing},
  journal={Signal Process.},
  volume={89},
  number={3},
  pages={291--304},
  year={2009},
  doi={10.1016/j.sigpro.2008.08.011},
}

@article{torokhti2023optimal,
  title={Optimal estimation of distributed highly noisy signals within {KLT}-{Wiener} archetype},
  author={Torokhti, Anatoli and Howlett, Phil},
  fjournal={Digital Signal Processing},
  journal={Digit. Signal Process.},
  volume={143},
  pages={104225},
  year={2023},
  doi={10.1016/j.dsp.2023.104225}
}

@article{torokhti2009generic,
  title={Generic weighted filtering of stochastic signals},
  author={Torokhti, Anatoli and Manton, Jonathan H},
  fjournal={IEEE Transactions on Signal Processing},
  journal={IEEE Trans. Signal Process.},
  volume={57},
  number={12},
  pages={4675--4685},
  year={2009},
  doi={10.1109/TSP.2009.2027463}
}

@inproceedings{torokhti2012optimal,
  title={Optimal Multidimensional Signal Processing in Wireless Sensor Networks.},
  author={Torokhti, Anatoli and Miklavcic, Stan},
  booktitle={Proceedings of the International Conference on Signal Processing and Multimedia Applications and Wireless Information Networks and Systems
(SIGMAP-2012)},
  pages={126--129},
  year={2012},
  doi={10.5220/0004056101260129},
  publisher={Scitepress}
}

@unpublished{torokhti2018best,
  title={Best approximations of non-linear mappings: Method of optimal injections},
  author={Torokhti, Anatoli and Soto-Quiros, Pablo},
  doi={10.48550/arXiv.1811.03125},
  year={2018},
  note={arXiv:1811.03125},
}

@article{torokhti2019improvement,
  title={Improvement in accuracy for dimensionality reduction and reconstruction of noisy signals. {Part I}: the case of random signals},
  author={Torokhti, Anatoli and Soto-Quiros, Pablo},
  fjournal={Signal Processing},
  journal={Signal Process.},
  volume={154},
  pages={338--349},
  year={2019},
  doi = {10.1016/j.sigpro.2018.09.021},
}

@article{torokhti2023matrix,
  title={Matrix Approximation by a Sum of Matrix Products},
  author={Torokhti, Anatoli and Soto-Quiros, Pablo and Ejov, Vladimir},
  fjournal={International Journal of Applied and Computational Mathematics},
  journal={Int. J. Appl. Comput. Math.},
  volume={9},
  number={6},
  pages={129},
  year={2023},
  doi={10.1007/s40819-023-01612-5}
}

@unpublished{turri_randomized_2023,
	title = {A randomized algorithm to solve reduced rank operator regression},
	doi = {10.48550/arXiv.2312.17348},
	author = {Turri, Giacomo and Kostic, Vladimir and Novelli, Pietro and Pontil, Massimiliano},
	note={arXiv:2312.17348},
	year = {2023},
}

@inproceedings{xiang2012optimal,
  title={Optimal exact least squares rank minimization},
  author={Xiang, Shuo and Zhu, Yunzhang and Shen, Xiaotong and Ye, Jieping},
  booktitle={Proceedings of the 18th ACM SIGKDD international conference on Knowledge discovery and data mining},
  pages={480--488},
  year={2012},
  publisher = {Association for Computing Machinery},
  doi = {10.1145/2339530.2339609},
}

@phdthesis{yang2020towards,
  title={Towards Better Understanding of Algorithms and Complexity of Some Learning Problems},
  author={Yang, Xin},
  year={2020},
  school={University of Washington},
  url={https://digital.lib.washington.edu/server/api/core/bitstreams/8adb71f1-477b-4b89-85b4-79249d9954b3/content}
}

@inproceedings{yu2012rank,
  title={Rank/norm regularization with closed-form solutions: Application to subspace clustering},
  author={Yu, Yao-Liang and Schuurmans, Dale},
  editor       = {F{\'{a}}bio Gagliardi Cozman and Avi Pfeffer},
  booktitle    = {{UAI} 2011, Proceedings of the Twenty-Seventh Conference on Uncertainty in Artificial Intelligence, Barcelona, Spain, July 14-17, 2011},
  pages        = {778--785},
  publisher    = {{AUAI} Press},
  year         = {2011},
  doi          = {10.48550/arXiv.1202.3772},
}

@inproceedings{zhao2016predictive,
  title={Predictive Collaborative Filtering with Side Information.},
  author={Zhao, Feipeng and Xiao, Min and Guo, Yuhong},
  booktitle={Proceedings of the Twenty-Fifth International Joint Conference on Artificial Intelligence, New York, USA, 9–15 July 2016},
  editor = {Subbarao Kambhampati},
  publisher    = {{AAAI} Press},
  pages={2385--2391},
  year={2016},
  url={https://www.ijcai.org/Proceedings/16/Papers/340.pdf}
}
\end{document}